\newtheorem{thm}{Theorem}[section]
\newtheorem{cor}[thm]{Corollary}
\newtheorem{lem}[thm]{Lemma}
\newtheorem{prop}[thm]{Proposition}
\newtheorem{prop-defn}[thm]{Proposition-Definition}
\theoremstyle{definition}
\newtheorem{rk}[thm]{Remark}
\newcommand{\C}{\mathbb C}
\newcommand{\Z}{\mathbb Z}
\newcommand{\N}{\mathbb N}
\newcommand\ind{{\rm 1\kern-.30em I}}
\def \bui#1#2{\mathrel{\mathop{\kern 0pt#1}\limits^{#2}}}
\newcommand{\biindice}[3]%
{%

\begin{array}[t]{c}
{\displaystyle #1}\\
{\scriptstyle #2}\\
{\scriptstyle #3}
\end{array}

}
\title{Eigenvalue estimates of the Kohn-Dirac operator on CR manifolds}
\author[1]{Georges Habib\thanks{\texttt{ghabib@ul.edu.lb}}}
\author[2]{Felipe Leitner\thanks{\texttt{felipe.leitner@uni-greifswald.de}}}
\affil[1]{\footnotesize Lebanese University, Faculty of Sciences II, Department of Mathematics, P.O. Box 90656 Fanar-Matn, Lebanon and Universit\'e de Lorraine, CNRS, IECL, 54506 Nancy, France}
\affil[2]{\footnotesize Institut f\"ur Mathematik und Informatik, Walther-Rathenau-Str. 47, 17489 Greifswald, Germany}
\date{}
\begin{document}
\begin{sloppypar}

\maketitle

\vspace*{-1cm}

\begin{abstract}
In this paper, we establish a new eigenvalue estimate for the Kohn-Dirac operator on a compact CR manifold.  The equality case of this estimate is characterized by the existence of a CR twistor spinor. We then classify CR manifolds carrying such spinors by showing that the Webster Ricci tensor has at most two eigenvalues. In this context, we construct several examples. 
\end{abstract}

\section{Introduction} 
Subject to our studies in this paper is the
so-called Kohn-Dirac operator on CR spin manifolds
of hypersurface type with
adapted pseudo-Hermitian structure (cf. \cite{Pe:05,S:17,L:18}).
Recall that a CR structure on a smooth manifold $M$ of odd dimension $2m+1\geq 3$ is given by a pair $(H(M),J)$ consisting
of a contact distribution $H(M)$, and some  complex structure $J$ on $H(M)$, subject to integrability conditions.
In case the contact distribution
is spin, the choice of an adapted contact form $\theta$ gives rise to a spinor derivative and a
corresponding Dirac operator $D_\theta$, known as Kohn-Dirac operator. We study eigenvalue estimates for $D_\theta$ on closed manifolds
and describe the geometric nature of the limiting case
for these estimates, which
is related to
some pseudo-Hermitian version of twistor equation. Examples of twistor spinors are provided.
Our studies are motivated by similar investigations for Riemannian and K{\"a}hlerian spin manifolds (cf. \cite{BFGK,Hij1,Kirch1,P:11}). In fact, 
the current setting of pseudo-Hermitian geometry
is closely related to the study of Dirac operators on
Riemannian foliations (cf. \cite{Habib1,Habib2}).\\

The general study of CR manifolds has a long
history and is subject to extensive research in the intersecting fields of complex analysis in several variables,
the theory of partial differential equations and differential geometry.
In complex analysis, CR manifolds of hypersurface type naturally occur as boundaries of strictly pseudoconvex domains in complex vector space.
Here, the tangential Cauchy-Riemannian equations become apparent, which are initiating the notion {\em CR analysis}.
It was already observed by J.J. Kohn in the 1960s that the natural sub-Laplacian on such boundaries admits a Hodge-like theory,
which relates harmonic forms to the cohomology of the tangential Cauchy-Riemann complex.
Kohn also introduced the embeddability problem for CR manifolds into complex vector space.
Further classical topics are CR extension problems and the equivalence problem for CR manifolds \cite{JJK1,JJK2,Bogg}.
The latter problem was solved by S.S. Chern and J. Moser 
\cite{CM} and reveals the geometric
nature of CR manifolds related to the theory of Cartan connections. Today, CR geometry is considered to be
a particular case of parabolic Cartan geometries
and allows the application of tractor calculus (see \cite{CG1,CS}). The framework of parabolic
geometries also explains the close relationship of CR geometry and conformal Lorentzian geometry. This is expressed by 
the famous Fefferman construction, which was originally discovered in the 1970's for the case
of boundaries of pseudoconvex domains. The Fefferman construction is a basic tool for
studying CR invariants (see \cite{Feff,Lee,CG1, ADS}).\\

Abstract CR manifolds and pseudo-Hermitian geometry were introduced in the works of  N. Tanaka \cite{Tanaka} and S. Webster \cite{Web}
in the late 1970s from a differential geometric point of view. Basic to their work is the discovery of a canonical  linear connection $\nabla^W$,
called Tanaka-Webster connection, which depends on the choice of the pseudo-Hermitian form.
In \cite{Tanaka}, this connection and related differential operators are studied
and, in particular, Bochner-type formulas for sub-Laplacian operators on differential forms are established.
Our approach to the geometric study of CR manifolds follows this spirit, extending investigations to spin geometric aspects.\\

It is important to notice that the choice of a pseudo-Hermitian $1$-form $\theta$ is not unique and, in fact,
any conformal change of $\theta$ by some positive function gives rise to (another) adapted contact form
for $H(M)$. The corresponding Kohn-Dirac operator
is not conformally covariant as a whole with respect to changing $\theta$. However, since
the structure group of the Tanaka-Webster connection is contained in the unitary group
$U(m)$, the CR spinor bundle $\Sigma(H(M))$ decomposes into $U(m)$-irreducible components, corresponding to the eigenvalues
of the Clifford action of $d\theta$. Only the Kohn-Dirac operator $D_\theta$ restricted to spinors
with $d\theta$-eigenvalue zero 
is conformally covariant (see \cite{L:21}).
The limiting case of the eigenvalue estimate for the Kohn-Dirac operator on such spinor fields
is characterized by a truely CR-invariant twistor equation.
We describe the geometry of CR twistor spinors in this middle dimension by a basic example in Section \ref{Examples}.
As to be expected, the Kohn-Dirac and twistor operators 
in this case are directly
linked to their counterparts on the corresponding Fefferman space.
For the other components of the  Kohn-Dirac operator 
this is not the case. In fact, these  components are not CR covariant. \\

This paper is organised as follows. In Section \ref{sec:review}, we review some general facts on CR manifolds. After introducing the Webster-Tanaka connection and its torsion, we consider the horizontal exterior differential which acts on the space of horizontal differential forms and show that its square does not necessarily vanish. We also show in Lemma \ref{lem:dwrtheta} that the horizontal exterior differential of the Ricci form is related to the Webster Ricci tensor of the characteristic vector field, meaning that, in contrast to K\"ahler manifolds, it is a closed form only when the torsion is divergence free. This formula is crucial in our computations. In Section \ref{sec:spincr}, we review the basic tools on CR spin manifolds and establish several identities for the  Kohn-Dirac operator that we need to use in our classification. The main result is established in Theorem \ref{thm:estimateeigenvalue} of Section \ref{sec:eigest}, where we get an estimate for the eigenvalues of the Kohn-Dirac operator on a CR manifold with positive Webster scalar curvature. To show the estimate, we use the CR twistor spinor equation (see \cite{G:93, Hij1, K:86} for the definition on K\"ahler manifolds) in order to bound the covariant derivative of any spinor field in terms of the Kohn-Dirac operator of that spinor. This technique allows to characterize the eigenvalue estimate by the means of CR twistor spinors. Sections \ref{sec:twisspigeneral} and  \ref{sec:twisspimiddle} are devoted to study the existence of CR twistor spinors in different components of the spinor bundle. Depending on the bundle where the CR twistor spinor is located, in Theorems \ref{thm:equalitycasegeneral} and \ref{eq:equalitycasemiddle} we show that the Webster Ricci tensor can have at most two eigenvalues. We end the paper by constructing two examples of CR manifolds carrying CR twistor spinors. The first one is a pseudo-Einstein case, meaning that the Webster Ricci tensor has one eigenvalue. 
The second one has a twistor in the middle dimension,
where $d\theta$ acts trivially, and 
two different eigenvalues for the Webster Ricci tensor. 
Notice that, it is shown in \cite{P:11} that K\"ahlerian twistor spinors in the middle dimension cannot occur on K\"ahler manifolds with constant positive scalar curvature. \\

{\bf Acknowledgment:}  This project was initiated when the first author was a fellow at the Alfried Krupp Wissenschaftskolleg in Greifswald which he would like to thank. Both authors would like also to thank the Alexander von Humboldt foundation, the University of Greifswald for the financial support and Ines Kath for her support. \\

\section{Review on CR manifolds}\label{sec:review}
In this section, we recall some basic facts on CR geometry. For more details, we refer to \cite{DT:06}.\\ 

Let $M$ be a connected manifold of odd dimension $2m+1$ for $m\geq 1$. A {\it CR structure of hypersurface type} on $M$ consists of a pair $(H(M),J)$ where $H(M)$ is a subbundle of $TM$ of corank $1$ and $J:H(M)\to H(M)$ is an almost complex structure. We say that $(H(M),J)$ is a {\it partially integrable CR structure} on $M$ if for any two sections $X,Y\in H(M)$, the vector field $[JX,Y]+[X,JY]$ is a section of $H(M)$ \cite{CS:00}. In this case, the bilinear pairing given by  
\begin{eqnarray*}
G:H(M)\times H(M)&\longrightarrow&TM/H(M),\\
(X,Y)&\longmapsto&-\overline{[X,JY]},
\end{eqnarray*}
is symmetric and satisfies $G(JX,JY)=G(X,Y)$. For a partially integrable CR structure, the Nijenhuis tensor is defined for $X,Y\in H(M)$ by
$$N_J(X,Y):=[X,Y]-[JX,JY]+J([JX,Y]+[X,JY]).$$
The CR structure is called {\it integrable} if the Nijenhuis tensor vanishes identically. Examples of integrable CR structures are provided by real hypersurfaces in $\C^{m+1}$, where $J$ is the standard complex structure on  $\C^{m+1}$ and $H(M):=(J\nu)^\perp$ with $\nu$ the unit normal vector field to $M$ in $\C^{m+1}$. An integrable CR structure $(M,H(M),J)$ is called a {\it strictly pseudoconvex CR manifold} if the distribution $H(M)$ is contact and $G$ is definite.  
If moreover $M$ is orientable, there exists a $1$-form $\theta$, whose kernel is the distribution $H(M)$, that is $H(M)={\rm ker}(\theta)$. In this case, the $2$-form $d\theta$ is nondegenerate and defines a unique vector field $T$ associated with $\theta$, called {\it characteristic vector field}, determined by 
$$T\lrcorner \theta=1 \quad\text{and}\quad T\lrcorner d\theta=0.$$
The tangent bundle $TM$ splits then as $TM=H(M)\oplus \mathbb{R}T$ and we denote by $\pi_\theta:TM\to H(M)$ the projection along $T$. For $X,Y\in H(M)$, we let  
$$g_\theta(X,Y):=\frac{1}{2} d\theta(X,JY),$$
 which defines a metric on $H(M)$. Since $(M^{2m+1}, H(M),J)$ is strictly pseudoconvex, the metric $g_\theta$ is either positive or negative definite. When $g_\theta$ is positive definite, we call $\theta$ a {\it pseudo-Hermitian structure}. In all the paper, we will consider an orientable, strictly pseudoconvex $(M^{2m+1}, H(M),J)$ endowed with a pseudo-Hermitian structure $\theta$. \\
 
For $X,Y,Z\in H(M)$, the Koszul formula,
 \begin{eqnarray*}
2g_\theta(\nabla^W_XY,Z)&=&X(g_\theta(Y,Z))+Y(g_\theta(X,Z))-Z(g_\theta(X,Y))+g_\theta(\pi_\theta[X,Y],Z)\\&&-g_\theta(\pi_\theta[X,Z],Y)-g_\theta(\pi_\theta[Y,Z],Y)
 \end{eqnarray*}
defines a partial connection
$$\nabla^W:\Gamma(H(M))\times \Gamma(H(M))\to \Gamma(H(M)),$$ which is metric with respect to $g_\theta$ and has no torsion in the sense that
$$\nabla^W_X Y-\nabla^W_Y X=\pi_\theta([X,Y]).$$
Extending the connection $\nabla^W$ in the $T$-direction by setting 
$$\nabla^W_T X=\frac{1}{2}([T,X]-J[T,JX]),$$
for $X\in H(M)$ and by $\nabla^W T=0$ allows to define a linear connection on $TM$, still denoted by $\nabla^W$, called the {\it Webster-Tanaka connection}. It is a metric connection with respect to the Webster metric $g^W:=g_\theta+\theta\circ\theta$ that has torsion. Indeed, the torsion ${\rm Tor}^W(X,Y):=\nabla^W_{X} Y-\nabla^W_{Y} X-[X,Y]$ is equal to  
$${\rm Tor}^W(X,Y)=d\theta(X,Y)T$$
and 
$${\rm Tor}^W(T,X)=-\frac{1}{2}([T,X]+J[T,JX])$$
for $X,Y\in H(M)$. The complex structure $J$ and the $2$-form $d\theta$ are both parallel with respect to the Webster-Tanaka connection. Hence $J$ becomes a K\"ahler structure on $H(M)$ with K\"ahler form equal to $\frac{1}{2}d\theta$. The Webster torsion is the endomorphism on $H(M)$ defined by $\tau(X):= {\rm Tor}^W(T,X)$, for $X\in H(M)$. One can easily show that $\tau$ is a symmetric tensor with vanishing trace. It also satisfies $\tau\circ J=-J\circ\tau$ and this composition is symmetric and trace free.

\begin{rk}
{\it It is not difficult to check that if $\widehat\nabla$ is the Levi-Civita connection of the metric $g^W$, then $\widehat\nabla_T T=0$ and $\widehat\nabla_X T=\tau(X)+JX$ hold for any $X\in H(M)$ (see \cite[Lem. 1.5.3]{S:17}). Therefore, if the torsion vanishes, the CR manifold is Sasakian. In this case the vector field $T$ defines a transversally K\"ahler foliation and the Tanaka-Webster connection on $H(M)$ coincides with the transversal Levi-Civita connection \cite[p. 48]{Tondeur}.}
\end{rk}

We denote by $R^W$ the curvature tensor associated with the connection $\nabla^W$, that is, 
$$R^W(X,Y):=\nabla^W_X\nabla^W_Y-\nabla^W_Y\nabla^W_X-\nabla^W_{[X,Y]}$$
and $$R^W(X,Y,Z,V):=g_\theta(R^W(X,Y)Z,V)$$
for all $X,Y,Z,V\in TM$, which is clearly skew-symmetric in the first two and the last two components. Since $\nabla^W$ has torsion, the first Bianchi identity states that 
$$\sum_{X,Y,Z\in TM}R^W(X,Y)Z=\sum_{X,Y,Z\in TM} ((\nabla_X^W {\rm Tor}^W)(Y,Z)+{\rm Tor}^W({\rm Tor}^W(X,Y),Z)).$$
Therefore, by setting $S$ the skew-symmetric endomorphism given by $S(X,Y):=(\nabla^W_X\tau)(Y)-(\nabla^W_Y\tau)(X)$ for $X,Y\in H(M)$, we get that $$R^W(T,X)Y-R^W(T,Y)X=S(Y,X).$$ A direct computation shows, by defining  $S(X,Y,Z):=g_\theta(S(X,Y),Z)$ for $X,Y,Z\in H(M)$, that 
\begin{equation}\label{eq:relationS}
S(X,Y,Z)=S(X,Z,Y)+S(Z,Y,X).
\end{equation}
Hence, by \eqref{eq:relationS}, we obtain
\begin{equation}\label{eq:curvatureT}
R^W(T,Z,Y,X)=S(Y,X,Z).
\end{equation}
Again, the first Bianchi identity applied to vectors in $H(M)$ along with the fact that the torsion in the $H(M)$-direction is $\nabla^W$-parallel allow to get that
\begin{equation}\label{eq:bianchiort}
\sum_{X,Y,Z\in H(M)}R^W(X,Y)Z=\sum_{X,Y,Z\in H(M)} d\theta(X,Y)\tau(Z).
\end{equation}
As a direct consequence, one finds
\begin{eqnarray}\label{eq:bianchiidentity2}
R^W(X,Y,Z,V)-R^W(Z,V,X,Y)&=&d\theta(Y,Z)\tau(X,V)+d\theta(X,V)\tau(Y,Z)\nonumber\\&&-d\theta(X,Z)\tau(Y,V)-d\theta(Y,V)\tau(X,Z)\nonumber\\
\end{eqnarray}
 for any $X,Y,Z,V\in H(M)$. The Webster Ricci endomorphism is the trace of the curvature operator associated to $\nabla^W$, that is, 
$${\rm Ric}^W(X):=\sum_{l=1}^{2m}R^W(X,e_l)e_l$$
for all $X\in TM$. Here $\{e_l\}_{l=1,\ldots,2m}$ is a local orthonormal basis of $H(M)$. Equation \eqref{eq:bianchiidentity2} as well as the properties of the endomorphism $\tau$ show that ${\rm Ric}^W$ is a symmetric tensor on $H(M)$. Also, with the help of \eqref{eq:curvatureT},  we have that 
\begin{equation}\label{eq:rict}
{\rm Ric}^W(T)=\sum_{l=1}^{2m}(\nabla^W_{e_l}\tau)e_l.
\end{equation}
Notice that ${\rm Ric}^W(T,T)=0$ and thus ${\rm Ric}^W(T)$ is in $H(M)$. Again with \eqref{eq:bianchiort}, we can prove that  
\begin{equation}\label{eq:riccrho}
{\rm Ric}^W(X,Y)=\rho_\theta(X,JY)+2(m-1)\tau(X,JY)
\end{equation}
for all $X,Y\in H(M)$, where $\rho_\theta$ is the Ricci $2$-form given by 
$$\rho_\theta(X,Y)=\frac{1}{2}\sum_{l=1}^{2m}g_\theta(R^W(Je_l,e_l)X,Y),$$
for all $X,Y\in H(M)$. Identity \eqref{eq:bianchiidentity2} allows to show that the Ricci form can be written as well as
\begin{equation}\label{eq:rhotheta}
\rho_\theta(X,Y)=\frac{1}{2}\sum_{l=1}^{2m}R^W(X,Y,Je_l,e_l).
\end{equation}
The Webster scalar curvature is the trace of the Webster Ricci curvature, that is ${\rm Scal}^W=\sum_{l=1}^{2m}g_\theta({\rm Ric}^W(e_l),e_l).$ Now,
the second Bianchi identity reads 
\begin{equation}\label{eq:secondbianchi}
\sum_{X,Y,Z\in TM}(\nabla^W_X R^W)(Y,Z)=-\sum_{X,Y,Z\in TM}R^W({\rm Tor}^W(X,Y),Z).
\end{equation}
Therefore, as in the Riemannian case, one can prove that the divergence of the Webster Ricci tensor is related to the Webster scalar curvature through the formulas 
 \begin{equation}\label{eq:divric}
 \sum_{l=1}^{2m}g_\theta((\nabla^W_{e_l}{\rm Ric}^W)e_l,X)=\frac{1}{2}X({\rm Scal}^W),\,\, \sum_{l=1}^{2m}g_\theta((\nabla^W_{e_l}{\rm Ric}^W)(T),e_l)=\frac{1}{2}T({\rm Scal}^W)
 \end{equation}
for all $X\in H(M)$. The second equation in \eqref{eq:divric} is obtained by using \eqref{eq:secondbianchi} as well as \eqref{eq:riccrho}. Using Equations \eqref{eq:rict}, \eqref{eq:riccrho} and \eqref{eq:divric}, the following equality  
\begin{equation}\label{eq:diverricciform}
\sum_{l=1}^{2m}(\nabla^W_{e_l}\rho_\theta)e_l=\frac{1}{2} J((d^W({\rm Scal}^W))^\sharp)-2(m-1){\rm Ric}^W(T)
\end{equation}
holds, where $d^W$ is the exterior differential in $H(M)$ given for any smooth function $f$ by $d^W(f):=\sum_{l=1}^{2m}e_l(f) e_l^*$. 
The $1$-form $\theta$ is called a pseudo-Einstein  structure \cite{L:88} on the CR manifold $(M,H(M),J)$ if the Ricci form is a multiple of $d\theta$, that is $\rho_\theta=\lambda d\theta$ for some smooth function $\lambda$. In this case, from equality \eqref{eq:riccrho}, we get that $\lambda=\frac{{\rm Scal}^W}{4m}$. Therefore, as a consequence of \eqref{eq:diverricciform}, we deduce that 
$${\rm Ric}^W(T)=\frac{1}{4m}J((d^W({\rm Scal}^W))^\sharp),$$
which means that the Webster scalar curvature is not necessarily constant. \\ 

Next, we will recall some facts about the exterior derivative. For some parts, we refer to \cite{S:17}. We define the space of horizontal forms as being the set of all differential forms on $M$ that are annihilated by the vector $T$, that means
$$\Omega^k(H(M))=\{\omega\in \Omega^k(M)|\,\, T\lrcorner\omega=0\}.$$
The horizontal exterior differential $d^W$ maps  $\Omega^k(H(M))$ to $\Omega^{k+1}(H(M))$ and is defined by 
\begin{equation}\label{eq:exterdiff}
(d^W\omega)(X_1,\ldots,X_{k+1}):=(d\omega)(X_1^H,\ldots,X_{k+1}^H),
\end{equation}
where $d$ is the exterior differential on $M$, $X_1,\ldots, X_{k+1}\in TM$ and $X^H:=\pi_\theta(X)$ is the projection to $H(M)$ of any vector field $X\in TM$. We denote by $\delta^W:\Omega^k(H(M))\to \Omega^{k-1}(H(M))$, the formal $L^2$-adjoint of $d^W$. It was shown in \cite[Lem. 3.1.4]{S:17} that $d^W$ and $\delta^W$ can be locally expressed as follows:
$$d^W=\sum_{l=1}^{2m}e_l^*\wedge(\nabla^W_{e_l})\circ {\rm pr}^H,\,\, \delta^W=-\sum_{l=1}^{2m}e_l\lrcorner\nabla^W_{e_l}.$$
In contrast to the exterior differential $d$, the square of $d^W$ is not necessarily equal to zero. Indeed, for any smooth function $f$ on $M$, we have $(d^W)^2 f=-T(f)d\theta$ and, thus, $d^W$ does not define a chain complex. In general, we have for $(d^W)^2$   the following: 

\begin{lem} \label{lem:squared}  On the space of horizontal differential forms, we have
$$(d^W)^2=-d\theta\wedge \tau^{[k]}-d\theta\wedge\nabla^W_T,$$
where $\tau^{[k]}$ is the canonical extension of the symmetric endomorphism $\tau$ given for any $\omega\in \Omega^k(H(M))$ by 
$$(\tau^{[k]}\omega)(X_1,\ldots,X_k)=\sum_{l=1}^k\omega(X_1,\ldots,\tau(X_l),\ldots,X_k),$$
for all $X_1,\ldots,X_k\in H(M)$.
\end{lem}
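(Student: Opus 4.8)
The plan is to compute $(d^W)^2$ directly from the local formula $d^W=\sum_l e_l^*\wedge(\nabla^W_{e_l})\circ \mathrm{pr}^H$, reducing everything to the known formula for functions, $(d^W)^2 f=-T(f)\,d\theta$, and to the way horizontal $1$-forms fail to commute with $\nabla^W_T$. First I would fix a point $p\in M$ and choose the local orthonormal frame $\{e_l\}$ of $H(M)$ to be Webster-parallel at $p$, so that $\nabla^W_{e_k}e_l=0$ and $\nabla^W_T e_l = 0$ at $p$ (the latter is legitimate since $\nabla^W$ is metric and we may additionally normalise in the $T$-direction; alternatively one keeps the extra $\nabla^W$-terms and checks they cancel by symmetry). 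With this choice, for $\omega\in\Omega^k(H(M))$ one gets at $p$
\[
(d^W)^2\omega=\sum_{k,l}e_k^*\wedge e_l^*\wedge \nabla^W_{e_k}\nabla^W_{e_l}\omega
=\tfrac12\sum_{k,l}e_k^*\wedge e_l^*\wedge R^W(e_k,e_l)\cdot\omega,
\]
where $R^W(e_k,e_l)\cdot$ denotes the induced curvature action on forms, plus a term coming from the torsion of $\nabla^W$ because $\nabla^W_{e_k}\nabla^W_{e_l}-\nabla^W_{e_l}\nabla^W_{e_k}=R^W(e_k,e_l)+\nabla^W_{[e_k,e_l]}$ and $[e_k,e_l]=\pi_\theta[e_k,e_l]+\,d\theta(e_k,e_l)\,T$ wait — more precisely $\mathrm{Tor}^W(e_k,e_l)=d\theta(e_k,e_l)T$, so $\nabla^W_{[e_k,e_l]}=\nabla^W_{\nabla^W_{e_k}e_l-\nabla^W_{e_l}e_k}-d\theta(e_k,e_l)\nabla^W_T$, and at $p$ the first piece vanishes, leaving the $-d\theta(e_k,e_l)\nabla^W_T$ contribution.

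Assembling: the torsion contribution is $-\sum_{k<l}d\theta(e_k,e_l)\,e_k^*\wedge e_l^*\wedge\nabla^W_T\omega = -\tfrac12\sum_{k,l}d\theta(e_k,e_l)e_k^*\wedge e_l^*\wedge\nabla^W_T\omega = -d\theta\wedge\nabla^W_T\omega$, using that $d\theta=\sum_{k<l}d\theta(e_k,e_l)e_k^*\wedge e_l^*$. This already produces the second term in the claimed formula, so it remains to identify the curvature piece $\tfrac12\sum_{k,l}e_k^*\wedge e_l^*\wedge R^W(e_k,e_l)\cdot\omega$ with $-d\theta\wedge\tau^{[k]}\omega$. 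For this I would first do the case $k=0$: there the curvature piece is absent and one must recover $(d^W)^2f=-T(f)d\theta=-d\theta\wedge\nabla^W_T f$, which matches (consistency check, and it also pins down the sign convention for $\nabla^W_T$ acting on the degree-$0$ part). For $k\ge1$ it suffices by the Leibniz rule and linearity to evaluate both sides on a horizontal $1$-form $e_j^*$: one needs
\[
\tfrac12\sum_{k,l}e_k^*\wedge e_l^*\wedge\big(R^W(e_k,e_l)\cdot e_j^*\big)=-d\theta\wedge(\tau^{[1]}e_j^*)=-d\theta\wedge(\tau e_j)^*\!\!\!,
\]
wait, $\tau^{[1]}e_j^* = (e_j\mapsto e_j^*\circ\tau)$, i.e. $(\tau^{[1]}e_j^*)(X)=e_j^*(\tau X)=g_\theta(\tau e_j,X)$, so indeed $\tau^{[1]}e_j^*=(\tau e_j)^\flat$. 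Now $(R^W(e_k,e_l)\cdot e_j^*)(X)=-g_\theta(R^W(e_k,e_l)X,e_j)=-R^W(e_k,e_l,X,e_j)$, and contracting against $e_k^*\wedge e_l^*$ is exactly the cyclic-sum appearing in the orthogonal first Bianchi identity \eqref{eq:bianchiort}: $\sum_{\text{cyclic}\,(e_k,e_l,X)}R^W(e_k,e_l)X=\sum_{\text{cyclic}}d\theta(e_k,e_l)\tau(X)$. Feeding \eqref{eq:bianchiort} in turns the left-hand side into $\sum_{k,l}d\theta(e_k,e_l)\,e_k^*\wedge e_l^*\wedge(\tau e_j)^\flat$-type terms and, after bookkeeping the antisymmetrisation, delivers $-d\theta\wedge(\tau e_j)^\flat$.

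The main obstacle I anticipate is purely combinatorial: correctly handling the wedge-antisymmetrisation and the factors of $2$ when passing between the "$\sum_{k<l}$" and "$\tfrac12\sum_{k,l}$" conventions, and making sure the curvature action on a general $k$-form reduces cleanly to the $1$-form case via the derivation property of $R^W(e_k,e_l)\cdot$ together with the fact that $e_k^*\wedge e_l^*\wedge(\cdot)$ is itself a graded object — one must check that the "diagonal" contributions (where the curvature hits the $e_k^*\wedge e_l^*$ already wedged on) either vanish or recombine, which is where $d\theta$ being $\nabla^W$-parallel and the skew-symmetry of $R^W$ in its first two slots get used. A cleaner route to the curvature term, which I would adopt if the direct frame computation gets unwieldy, is to note that $\tfrac12\sum_{k,l}e_k^*\wedge e_l^*\wedge R^W(e_k,e_l)\cdot(-)$ is a $\mathbb R$-linear, degree-$+2$ derivation-type operator on $\Omega^\bullet(H(M))$ determined by its value on $\Omega^0$ (zero) and $\Omega^1$ (the Bianchi computation above), hence equals $-d\theta\wedge\tau^{[k]}$ on all of $\Omega^k(H(M))$ because the right-hand side has the same two properties; this abstractly circumvents the worst of the index-chasing.
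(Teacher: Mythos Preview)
Your proposal is correct and follows essentially the same route as the paper: compute $(d^W)^2$ in a frame that is $\nabla^W$-parallel at a point, split off the torsion contribution $-d\theta\wedge\nabla^W_T$ coming from $\mathrm{Tor}^W(e_k,e_l)=d\theta(e_k,e_l)T$, and then identify the remaining curvature piece $\tfrac12\sum_{k,l}e_k^*\wedge e_l^*\wedge R^W(e_k,e_l)$ with $-d\theta\wedge\tau^{[k]}$ by means of the horizontal first Bianchi identity \eqref{eq:bianchiort}.

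The only difference worth noting is in this last identification. The paper writes the curvature action on forms explicitly as $R^W(X,Y)=\sum_n (R^W(X,Y)e_n)^*\wedge(e_n\lrcorner\,\cdot\,)$, applies Bianchi inside the triple wedge, and obtains a self-referential equation
\[
\sum_{s,l}e_s^*\wedge e_l^*\wedge R^W(e_s,e_l)=-2\sum_{s,l}e_s^*\wedge e_l^*\wedge R^W(e_s,e_l)-6\,d\theta\wedge\tau^{[k]},
\]
which solves directly for the answer on all degrees at once. Your ``cleaner route'' instead observes that both $\tfrac12\sum_{k,l}e_k^*\wedge e_l^*\wedge R^W(e_k,e_l)\cdot(-)$ and $-d\theta\wedge\tau^{[\bullet]}$ are degree-$+2$ derivations of the horizontal exterior algebra (since $R^W(e_k,e_l)$ acts as a derivation and wedging by a $2$-form commutes past any form with trivial sign), so it suffices to check they agree on $\Omega^0$ (trivially) and $\Omega^1$ (a short Bianchi computation). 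This is a perfectly legitimate and arguably tidier way to avoid the factor-of-$3$ bookkeeping; conversely the paper's version has the virtue of being entirely mechanical. Your worry about ``diagonal contributions where the curvature hits the $e_k^*\wedge e_l^*$ already wedged on'' is unfounded: the curvature acts only on $\omega$, not on the external $e_k^*\wedge e_l^*$, so no such terms arise.
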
 
\begin{proof} We choose a local orthonormal frame $\{e_l\}_{l=1,\ldots,2m}$ of $H(M)$ parallel at some point $x$ and  compute 
 \begin{eqnarray*}
(d^W)^2&=&\sum_{s,l=1}^{2m}e_s^*\wedge e_l^*\wedge\nabla^W_{e_s}\nabla^W_{e_l}\\
&=&\sum_{s,l=1}^{2m}e_s^*\wedge e_l^*\wedge R^W(e_s,e_l)-(d^W)^2-\sum_{s,l=1}^{2m}d\theta(e_s,e_l)e_s^*\wedge e_l^*\wedge\nabla^W_T.
 \end{eqnarray*}  
Hence, we get that $(d^W)^2=\frac{1}{2}\sum_{s,l=1}^{2m}e_s^*\wedge e_l^*\wedge R^W(e_s,e_l)-d\theta\wedge \nabla^W_T$. To compute the curvature term, we write it as $R^W(X,Y)=\sum_{l=1}^{2m} R^W(X,Y)e_l\wedge (e_l\lrcorner) $ and get
 \begin{eqnarray*}
 \sum_{s,l=1}^{2m}e_s^*\wedge e_l^*\wedge R^W(e_s,e_l)&=&-\sum_{s,l,n,p=1}^{2m}R^W(e_s,e_l,e_n,e_p)e_s^*\wedge e_l^*\wedge e_n^*\wedge (e_p\lrcorner)\\
 &\bui{=}{\eqref{eq:bianchiort}}&-2\sum_{s,l=1}^{2m}e_s^*\wedge e_l^*\wedge R^W(e_s,e_l)\\&&-6\sum_{s,l=1}^{2m}e_s^*\wedge (Je_s)^*\wedge\tau(e_l)^*\wedge 
 (e_l\lrcorner).
 \end{eqnarray*}
Hence, we deduce that $\sum_{s,l=1}^{2m}e_s^*\wedge e_l^*\wedge R^W(e_s,e_l)=-2d\theta\wedge\tau^{[k]}$ and the result follows. 
\end{proof}
In the following, we compute the exterior differential of the Ricci form $\rho_\theta$  in terms of ${\rm Ric}^W(T)$. This formula will be useful later in the computations. 
\begin{lem} \label{lem:dwrtheta} We have the formula 
\begin{equation}\label{eq:dwrhotheta}
d^W\rho_\theta=-(J{\rm Ric}^W(T))^\flat\wedge d\theta.
\end{equation}
\end{lem}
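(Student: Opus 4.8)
The plan is to evaluate both sides on a horizontal triple $X,Y,Z\in H(M)$ and trace everything back to the second Bianchi identity \eqref{eq:secondbianchi}. Since the horizontal part of ${\rm Tor}^W$ vanishes (only the $T$-direction survives in ${\rm Tor}^W(X,Y)=d\theta(X,Y)T$), the local formula $d^W=\sum_l e_l^*\wedge\nabla^W_{e_l}$ applied to the horizontal $2$-form $\rho_\theta$ gives, for $X,Y,Z\in H(M)$,
\[
(d^W\rho_\theta)(X,Y,Z)=\sum_{X,Y,Z}(\nabla^W_X\rho_\theta)(Y,Z),
\]
where $\sum_{X,Y,Z}$ denotes the cyclic sum. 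Fixing $x\in M$, choosing a local orthonormal frame $\{e_l\}$ of $H(M)$ parallel at $x$ and using $\nabla^W J=0$, formula \eqref{eq:rhotheta} yields at $x$ the identity $(\nabla^W_X\rho_\theta)(Y,Z)=\tfrac12\sum_{l=1}^{2m}(\nabla^W_X R^W)(Y,Z,Je_l,e_l)$, hence
\[
(d^W\rho_\theta)(X,Y,Z)=\tfrac12\sum_{l=1}^{2m}\sum_{X,Y,Z}(\nabla^W_X R^W)(Y,Z,Je_l,e_l).
\]

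Next I would feed in the second Bianchi identity \eqref{eq:secondbianchi}. Since ${\rm Tor}^W(X,Y)=d\theta(X,Y)T$ for $X,Y\in H(M)$, it reads $\sum_{X,Y,Z}(\nabla^W_X R^W)(Y,Z)=-\sum_{X,Y,Z}d\theta(X,Y)\,R^W(T,Z)$ as an identity of endomorphisms of $H(M)$. Pairing with $(Je_l,e_l)$, summing over $l$ and dividing by $2$, the previous display becomes
\[
(d^W\rho_\theta)(X,Y,Z)=-\sum_{X,Y,Z}d\theta(X,Y)\,\varphi(Z)=-(\varphi\wedge d\theta)(X,Y,Z),
\]
where $\varphi$ is the $1$-form $\varphi(Z):=\tfrac12\sum_{l}R^W(T,Z,Je_l,e_l)$. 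Thus the lemma is reduced to the single identity $\varphi=(J\,{\rm Ric}^W(T))^\flat$.

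To establish that identity I would use \eqref{eq:curvatureT}, which gives $R^W(T,Z,Je_l,e_l)=S(Je_l,e_l,Z)$, so that $\varphi(Z)=\tfrac12\sum_l S(Je_l,e_l,Z)$ with $S(U,V,W)=g_\theta\big((\nabla^W_U\tau)V-(\nabla^W_V\tau)U,\,W\big)$. Expanding $S$, one gets two sums; using that $\tau$ and each $\nabla^W_{e_l}\tau$ are symmetric and satisfy $\tau\circ J=-J\circ\tau$ (hence $(\nabla^W_{e_l}\tau)\circ J=-J\circ(\nabla^W_{e_l}\tau)$), that $\{Je_l\}$ is again an orthonormal frame of $H(M)$, and that $\tau$ is trace free, each sum collapses to $g_\theta(J\,{\rm Ric}^W(T),Z)$ by means of \eqref{eq:rict}; therefore $\sum_l S(Je_l,e_l,Z)=2\,g_\theta(J\,{\rm Ric}^W(T),Z)$ and $\varphi=(J\,{\rm Ric}^W(T))^\flat$. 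Combining this with the previous step gives $d^W\rho_\theta=-(J\,{\rm Ric}^W(T))^\flat\wedge d\theta$.

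The main obstacle is precisely this last step: carefully bookkeeping the signs in the index manipulations — in particular the interplay between the anticommutation relation $\tau\circ J=-J\circ\tau$, the symmetry of $\nabla^W_X\tau$, and the substitution $e_l\leftrightarrow Je_l$ — together with making sure the wedge-product and exterior-derivative normalizations match those implicit in the local formula for $d^W$. Everything else follows directly from the curvature identities already recorded in \eqref{eq:rhotheta}, \eqref{eq:curvatureT}, \eqref{eq:rict} and \eqref{eq:secondbianchi}.
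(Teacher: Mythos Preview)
Your proposal is correct and follows essentially the same approach as the paper's own proof: express $d^W\rho_\theta$ as a cyclic sum of covariant derivatives of $\rho_\theta$, rewrite these via \eqref{eq:rhotheta} in terms of $\nabla^W R^W$, apply the second Bianchi identity \eqref{eq:secondbianchi} to bring in $R^W(T,\cdot)$, and then use \eqref{eq:curvatureT} together with the anticommutation $\tau\circ J=-J\circ\tau$ and \eqref{eq:rict} to identify the resulting $1$-form with $(J\,{\rm Ric}^W(T))^\flat$. The paper carries out exactly this computation (see \eqref{eq:drhotheta} and \eqref{eq:tracericciformulaT}); your packaging via the auxiliary form $\varphi$ is a mild cosmetic variant, and the sign bookkeeping you flag as the main obstacle does indeed go through.
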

\begin{proof} For all $X,Y,Z\in H(M)$, we have
\begin{eqnarray*}
(d^W\rho_\theta)(X,Y,Z)&=&(\nabla^W_X\rho_\theta)(Y,Z)-(\nabla^W_Y\rho_\theta)(X,Z)+(\nabla^W_Z\rho_\theta)(X,Y).
\end{eqnarray*}
Using \eqref{eq:rhotheta}, we write $\rho_\theta=\frac{1}{2}{\rm tr}_{g_\theta}(R^W\circ J)$ and compute 
\begin{multline}\label{eq:drhotheta}
(d^W\rho_\theta)(X,Y,Z)\\
=\frac{1}{2}{\rm tr}_{g_\theta}(\left((\nabla^W_XR^W)(Y,Z)-(\nabla^W_YR^W)(X,Z)+(\nabla^W_ZR^W)(X,Y)\right)\circ J)\\
\bui{=}{\eqref{eq:secondbianchi}}-\frac{1}{2}{\rm tr}_{g_\theta}(\left(d\theta(X,Y)R^W(T,Z)+d\theta(Y,Z)R^W(T,X)+d\theta(Z,X)R^W(T,Y))\circ J\right)\\
=-\frac{1}{2}d\theta(X,Y)\sum_{l=1}^{2m}R^W(T,Z,Je_l,e_l)-\frac{1}{2}d\theta(Y,Z)\sum_{l=1}^{2m}R^W(T,X,Je_l,e_l)\\-\frac{1}{2}d\theta(Z,X)\sum_{l=1}^{2m}R^W(T,Y,Je_l,e_l).
\end{multline}
From \eqref{eq:curvatureT}, we  have
\begin{eqnarray}\label{eq:tracericciformulaT}
\sum_{l=1}^{2m}R^W(T,X,Je_l,e_l)&=&\sum_{l=1}^{2m}S(Je_l,e_l,X)\nonumber\\
&=&\sum_{l=1}^{2m}g_\theta((\nabla^W_{Je_l}\tau)e_l-(\nabla^W_{e_l}\tau)Je_l,X)\nonumber\\
&=&-2\sum_{l=1}^{2m}g_\theta((\nabla^W_{e_l}\tau)Je_l,X)\nonumber\\
&=&2g_\theta(J{\rm Ric}^W(T),X).\nonumber\\
\end{eqnarray}
Hence, after using that $d\theta(X,Y)=-2g_\theta(X,JY)$ for all $X,Y\in H(M)$, Equation \eqref{eq:drhotheta} reduces to 
\begin{eqnarray*}
(d^W\rho_\theta)(X,Y,Z)=2g_\theta(X,JY)g_\theta(J{\rm Ric}^W(T),Z)+2g_\theta(Y,JZ)g_\theta(J{\rm Ric}^W(T),X)\\+2g_\theta(Z,JX)g_\theta(J{\rm Ric}^W(T),Y),
\end{eqnarray*}
which is \eqref{eq:dwrhotheta}.  
\end{proof}

\begin{rk} {\it One can immediately see that the $2$-form $\rho_\theta$ is a $d^W$-closed form if and only if  ${\rm Ric}^W(T)$ vanishes, that is, the torsion is divergence free.}  
\end{rk}
\section{Spin structure on CR manifolds}\label{sec:spincr}
In this section, we are interested in  spin CR manifolds with pseudo-Hermitian form. We consider the Kohn-Dirac operator and get a new estimate for the eigenvalues of this operator. We also characterize the equality case by means of the so-called twistor equation. For some parts, we refer to \cite{L:18}.\\ 

In what follows, we consider a strictly pseudoconvex CR manifold $(M^{2m+1},H(M),J)$ with $m\geq 1$ and a fixed pseudo-Hermitian structure $\theta$. We assume that $H(M)$ carries a spin structure as a vector bundle. We denote by $\Sigma(H(M))$ its spinor bundle which is a complex vector bundle of complex rank $2^m$ and by $``\cdot"$ the Clifford multiplication  
\begin{eqnarray*} H(M)\otimes \Sigma(H(M))&\longrightarrow & \Sigma(H(M)),\\
	(X,\varphi) &\longmapsto & X\cdot\varphi,
\end{eqnarray*}
which satisfies $\langle X\cdot\varphi,\psi\rangle=-\langle\varphi,X\cdot\psi\rangle$, for any $\varphi,\psi\in \Sigma(H(M))$, where $\langle\cdot,\cdot\rangle$ is the Hermitian inner product on $\Sigma(H(M))$. As for spin manifolds, the Webster- Tanaka connection lifts to $\Sigma(H(M))$ and gives rise to a covariant derivative   $\nabla^W:\Gamma(TM)\otimes\Gamma(\Sigma(H(M)))\to \Gamma(\Sigma(H(M)))$, that we still denote by $\nabla^W$. This connection satisfies the rules 
$$\nabla^W_X(Y\cdot\varphi)=\nabla^W_X Y\cdot\varphi+Y\cdot\nabla^W_X\varphi,$$
and, 
$$X(\langle\varphi,\psi\rangle)=\langle\nabla^W_X\varphi,\psi\rangle+\langle\varphi,\nabla^W_X\psi\rangle,$$
for all $X\in \Gamma(TM), Y\in H(M)$ and $\varphi,\psi\in \Sigma(H(M))$. The curvature operator of the connection $\nabla^W$ is given, for any $X,Y\in \Gamma(TM)$ and $\varphi\in \Sigma(H(M))$, by
\begin{eqnarray*}
R^W(X,Y)\varphi&=&\nabla^W_X\nabla^W_Y\varphi-\nabla^W_Y\nabla^W_X\varphi-\nabla^W_{[X,Y]}\varphi\\
&=&\frac{1}{4}\sum_{k,l=1}^{2m}g_\theta(R^W(X,Y)e_k,e_l)e_k\cdot e_l\cdot\varphi.
\end{eqnarray*}
Here $\{e_l\}_{l=1,\ldots,2m}$ is a local orthonormal frame of $H(M)$. Contracting the curvature operator yields the Ricci identity (see \cite[Eq. 22]{L:18} for the proof), 
\begin{equation}\label{eq:riccifor}
\sum_{l=1}^{2m} e_l\cdot R^W(X,e_l)\varphi=\frac{1}{2}\rho_\theta(JX)\cdot\varphi+\frac{1}{2}\tau(X)\cdot d\theta\cdot\varphi-m\tau(JX)\cdot\varphi
\end{equation}
for all $X\in H(M)$ and $\varphi\in \Sigma(H(M))$. Clearly, when the torsion $\tau$ vanishes, the r.h.s. of the last identity reduces to the Webster Ricci tensor. However, despite the presence of the torsion term, tracing \eqref{eq:riccifor} yields
\begin{equation}\label{eq:tracericciformula}
\sum_{k,l=1}^{2m} e_k\cdot e_l\cdot R^W(e_k,e_l)\varphi=\frac{1}{2} {\rm Scal}^W\varphi. 
\end{equation}
This is due to the fact that $\tau$ and $\tau\circ J$ are both symmetric and trace-free endomorphisms. The Ricci identity in the $T$-direction is stated in the following result: 
\begin{prop} \label{Riccitdirection} (Ricci identity in $T$-direction) The identity
$$\sum_{l=1}^{2m}e_l\cdot R^W(T,e_l)\varphi=-\frac{1}{2}{\rm Ric}^W(T)\cdot\varphi$$ 
holds for any $\varphi\in \Sigma(H(M))$. 
\end{prop}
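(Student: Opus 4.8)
The plan is to mimic the proof of the Ricci identity \eqref{eq:riccifor} in the horizontal directions, but now feeding in the curvature tensor $R^W(T,X)$ rather than $R^W(X,e_l)$, and to exploit the algebraic identities for the $T$-curvature that were established in Section \ref{sec:review}, namely \eqref{eq:curvatureT} and \eqref{eq:rict}. First I would use the spinorial expression for the curvature operator,
\[
R^W(T,e_l)\varphi=\frac14\sum_{k,n=1}^{2m}g_\theta(R^W(T,e_l)e_k,e_n)\,e_k\cdot e_n\cdot\varphi ,
\]
and Clifford-multiply by $e_l$, then sum over $l$. This produces a triple Clifford product $\sum_{k,l,n}R^W(T,e_l,e_k,e_n)\,e_l\cdot e_k\cdot e_n\cdot\varphi$, which I would reorganise using the standard Clifford relations $e_l\cdot e_k=-e_k\cdot e_l-2\delta_{kl}$ so as to separate the fully skew part from the contractions.

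The key input is \eqref{eq:curvatureT}, $R^W(T,Z,Y,X)=S(Y,X,Z)$, together with the symmetry relation \eqref{eq:relationS} for $S$; these control the symmetry type of $R^W(T,\cdot,\cdot,\cdot)$ and should make the totally antisymmetrised piece $\sum_{l,k,n}R^W(T,e_l,e_k,e_n)\,e_l\wedge e_k\wedge e_n$ vanish, just as the first Bianchi identity kills the analogous term in the derivation of \eqref{eq:riccifor}. What survives are the contracted terms: a contraction in $(k,n)$ gives a multiple of $\sum_l R^W(T,e_l,e_k,e_k)\,e_l\cdot\varphi = 0$ by skew-symmetry of the curvature in its last two slots, and a contraction in $(l,k)$ or $(l,n)$ gives $\sum_n\big(\sum_l R^W(T,e_l,e_l,e_n)\big)e_n\cdot\varphi$, i.e. the Webster Ricci of $T$ up to sign. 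Here I would invoke \eqref{eq:rict}, ${\rm Ric}^W(T)=\sum_l(\nabla^W_{e_l}\tau)e_l$, or rather just the bare definition ${\rm Ric}^W(X)=\sum_l R^W(X,e_l)e_l$ combined with the pair symmetry to identify $\sum_l R^W(T,e_l,e_l,e_n)$ with $-{\rm Ric}^W(T,e_n)$, and collect the numerical constant to arrive at $-\tfrac12{\rm Ric}^W(T)\cdot\varphi$.

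I expect the main obstacle to be bookkeeping of the combinatorial constants: because $R^W(T,\cdot,\cdot,\cdot)$ does not enjoy the same pair symmetry as a Levi-Civita curvature tensor (there is the extra term \eqref{eq:bianchiidentity2}, although in the $T$-direction it is \eqref{eq:curvatureT} that governs things), one must be careful about which contractions of the triple product genuinely vanish and which do not, and about the factor of $\tfrac14$ from the curvature formula interacting with the multiplicities coming from the three ways of contracting a rank-three expression. A clean way to organise this is to fix a local orthonormal frame $\{e_l\}$ parallel at the point $x$ where the identity is checked, so that covariant derivatives of frame vectors drop out, and to handle the antisymmetric three-form part and the three contraction terms separately, using \eqref{eq:relationS} to show the three-form part is zero and ${\rm Ric}^W(T,T)=0$ (noted right after \eqref{eq:rict}) to discard the term with a $T$ in a Clifford slot. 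Assembling these pieces yields the stated formula.
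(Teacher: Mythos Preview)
Your approach is correct and follows the same spirit as the paper's. The paper organises the computation slightly differently: rather than going straight to the triple Clifford product, it first establishes the intermediate identity
\[
R^W(T,X)\varphi=\tfrac12\sum_k e_k\cdot(\nabla^W_{e_k}\tau)(X)\cdot\varphi+\tfrac12\,g_\theta({\rm Ric}^W(T),X)\varphi
\]
(labelled \eqref{eq:ddeltatau}), obtained by writing $R^W(T,X,e_k,e_l)=S(e_k,e_l,X)$ in terms of $\nabla^W\tau$. It then Clifford-contracts this with $e_l$; the remaining double sum $\sum_{k,l}e_k\cdot e_l\cdot(\nabla^W_{e_k}\tau)(e_l)$ is seen to vanish because $\tau$ is trace-free. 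The advantage of this route is that \eqref{eq:ddeltatau} is reused later in the paper (in the proof of \eqref{eq:bracketnabla^Wd2}).

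A few small points to tighten in your write-up. First, $\sum_l R^W(T,e_l,e_l,e_n)=+{\rm Ric}^W(T,e_n)$, not minus; the overall $-\tfrac12$ comes from combining this with the \emph{other} side-contraction $\sum_l R^W(T,e_l,e_k,e_l)=\sum_l S(e_k,e_l,e_l)=-{\rm Ric}^W(T,e_k)$, which is genuinely different and needs the symmetry of $\nabla^W\tau$ (or \eqref{eq:relationS}) to evaluate --- so the two ``$(l,k)$ or $(l,n)$'' contractions should not be conflated. Second, your check that the $3$-form part vanishes is exactly right: the cyclic sum $S(e_k,e_n,e_l)+S(e_n,e_l,e_k)+S(e_l,e_k,e_n)=0$ follows from skew-symmetry of $S$ in its first two slots together with \eqref{eq:relationS}. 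Third, the remark about ${\rm Ric}^W(T,T)=0$ discarding ``a $T$ in a Clifford slot'' is not needed here: all Clifford factors in the computation are horizontal vectors $e_l,e_k,e_n$, and $T$ never enters a Clifford product.
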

\begin{proof} Using the definition of the spinorial curvature operator, we compute for any $X\in H(M)$
\begin{eqnarray}\label{eq:ddeltatau}
R^W(T,X)\varphi&=&\frac{1}{4}\sum_{k,l=1}^{2m}R^W(T,X,e_k,e_l)e_k\cdot e_l\cdot\varphi\nonumber\\
&\bui{=}{\eqref{eq:curvatureT}}&\frac{1}{4}\sum_{k,l=1}^{2m}S(e_k,e_l,X)e_k\cdot e_l\cdot\varphi\nonumber\\
&=&\frac{1}{4}\sum_{k,l=1}^{2m}g_\theta((\nabla^W_{e_k}\tau)(e_l)-(\nabla^W_{e_l}\tau)(e_k),X)e_k\cdot e_l\cdot\varphi\nonumber\\
&=&\frac{1}{4}\sum_{k=1}^{2m}e_k\cdot (\nabla^W_{e_k}\tau)(X)\cdot\varphi-\frac{1}{4}\sum_{l=1}^{2m}(\nabla^W_{e_l}\tau)(X)\cdot e_l\cdot\varphi\nonumber\\
&=&\frac{1}{2}\sum_{k=1}^{2m}e_k\cdot (\nabla^W_{e_k}\tau)(X)\cdot\varphi+\frac{1}{2}g_\theta({\rm Ric}^W(T),X)\varphi.\nonumber\\
\end{eqnarray}
Now, contracting the last equation and using again that $X\cdot Y=X\wedge Y-g_\theta(X,Y)$, we write
\begin{eqnarray*}
\sum_{l=1}^{2m}e_l\cdot R^W(T,e_l)\varphi
&=&\frac{1}{2}\sum_{k,l=1}^{2m}e_l\cdot e_k\cdot (\nabla^W_{e_k}\tau)(e_l)\cdot\varphi+\frac{1}{2}{\rm Ric}^W(T)\cdot\varphi\\
&\bui{=}{\eqref{eq:rict}}&-\frac{1}{2}\sum_{k,l=1}^{2m}e_k\cdot e_l\cdot (\nabla^W_{e_k}\tau)(e_l)\cdot\varphi-\frac{1}{2}{\rm Ric}^W(T)\cdot\varphi.
\end{eqnarray*}
Since the first term in the r.h.s. is equal to $\frac{1}{2}d^W({\rm tr}(\tau))\cdot\varphi$ which clearly vanishes, this finishes the proof. 
\end{proof}
With respect to the complex decomposition 
$$H(M)\otimes\mathbb{C}=H^{1,0}(M)\oplus H^{0,1}(M)$$
associated with the eigenvalues $\pm i$ of the complex structure $J$, we define for any vector $X\in H(M)$
$$X^+=\frac{1}{2}(X-iJX)\in H^{1,0}(M),\,\, X^-=\frac{1}{2}(X+iJX)\in H^{0,1}(M).$$
 The K\"ahler $2$-form $\frac{1}{2}d\theta$ splits the spinor bundle as (see \cite{K:86, Hij1}) 
\begin{equation}\label{eq:decospin}
\Sigma(H(M))=\oplus_{r=0}^m\Sigma_r(H(M)),
\end{equation}
where $\Sigma_r(H(M))$ is the eigenspace associated with the eigenvalue $i(2r-m)$, that is, 
$$\Sigma_r(H(M))=\{\varphi\in \Sigma(H(M))|\,\, d\theta\cdot\varphi=2i(2r-m)\varphi\}.$$
The Clifford action of $X^+$ (resp. $X^-$) maps $\Sigma_r(H(M))$ to $\Sigma_{r+1}(H(M))$ (resp. $\Sigma_{r-1}(H(M))$). Also, there is a canonical $\mathbb{C}$- anti-linear parallel endomorphism $\mathfrak{j}:\Sigma(H(M))\to \Sigma(H(M))$ such that $\mathfrak{j}^2=(-1)^{\frac{m(m+1)}{2}}$ with $\mathfrak{j}:\Sigma_r(H(M))\to \Sigma_{m-r}(H(M))$ and $\mathfrak{j}(X\cdot\varphi)=\overline{X}\cdot\mathfrak{j}(\varphi)$(see \cite{K:86, Hij1}).\\ 

The Kohn-Dirac operator is a first order sub-elliptic differential operator acting on sections of the spinor bundle $\Sigma(H(M))$. It  was introduced in \cite{Pe:05} (see also \cite{S:17}) on contact metric structures and is defined by 
$$D_\theta=\sum_{l=1}^{2m} e_l\cdot\nabla^W_{e_l}.$$ 
It is a formally self-adjoint operator (if $M$ is closed) with respect to the $L^2$-inner product $(\cdot,\cdot)=\int_M\langle\cdot,\cdot\rangle {\rm vol}$, where ${\rm vol}$ is the volume form of the Webster metric $g^W$. With respect to the decomposition \eqref{eq:decospin}, the Kohn-Dirac operator splits as 
$D_\theta=D_++D_-$ where 
$D_\pm:\Gamma(\Sigma_r(H(M)))\to \Gamma(\Sigma_{r\pm 1}(H(M)))$ is given by $D_\pm=\sum_{l=1}^{2m} e_l^\pm\cdot\nabla^W_{e_l}$. Hence, one can easily see that $D_\theta:\Gamma(\Sigma_r(H(M)))\to \Gamma(\Sigma_{r-1}(H(M)))\oplus \Gamma(\Sigma_{r+1}(H(M)))$.
The operators $D_+$ and $D_-$ can be also expressed in terms of the operator 
$$D_\theta^c=\sum_{l=1}^{2m} Je_l\cdot\nabla^W_{e_l},$$ 
by $D_\pm=\frac{1}{2}(D_\theta\mp iD_\theta^c)$. An easy computation shows that 
$$(D_\theta^c)^2=D_\theta^2\quad\text{and}\quad D_\theta D_\theta^c+D_\theta^c D_\theta=0.$$ 
Hence, we deduce that 
$$D_+^2=D_-^2=0, \,\, D_+D_-+D_-D_+=D_\theta^2.$$ 
This shows in particular that the space $\Sigma_r(H(M))$ is preserved by $D_\theta^2$, meaning that $D_\theta^2:\Gamma(\Sigma_r(H(M)))\to \Gamma(\Sigma_r(H(M)))$.  Also, as the endomorphism $\mathfrak{j}$ is parallel, we have that $D_\pm\mathfrak{j}=\mathfrak{j}D_\mp$. In \cite[Eq. 9]{L:18}, it is shown that the Kohn-Dirac opeator has a Schr\"odinger-Lichnerowicz formula which involves the covariant derivative in $T$-direction. Namely, the identity is
$$D_\theta^2=\nabla^*\nabla+\frac{1}{4}{\rm Scal}^W-d\theta\cdot\nabla^W_T,$$
where $\nabla^*\nabla=-\sum_{l=1}^{2m}\nabla^W_{e_l}\nabla^W_{e_l}+\sum_{l=1}^{2m}\nabla^W_{\nabla^W_{e_l}e_l}$. The term $\nabla^W_T$ can be computed in terms of the Ricci form as follows \cite[p. 109]{L:18}
\begin{eqnarray}\label{eq:nabla^Wt}
\nabla^W_T&=&-\frac{1}{4m}\rho_\theta\cdot+\frac{1}{2m}\sum_{l=1}^{2m}\left(\nabla^W_{\nabla^W_{e_l}Je_l}-\nabla^W_{e_l}\nabla^W_{Je_l}\right).
\end{eqnarray}
 Hence after replacing, the Schr\"odinger-Lichnerowicz formula reduces to
 \begin{equation}\label{eq:schrLich}
 D_\theta^2=\nabla^*\nabla+\frac{1}{4}{\rm Scal}^W+\frac{1}{4m}d\theta\cdot\rho_\theta-\frac{1}{2m}\sum_{l=1}^{2m}d\theta\cdot\left(\nabla^W_{\nabla^W_{e_l}Je_l}-\nabla^W_{e_l}\nabla^W_{Je_l}\right)
 \end{equation}
 and, by restricting to $\Sigma_r(H(M))$, it takes the form \cite[Thm. 8.1]{L:18}
\begin{equation}\label{eq:Lich}
D_\theta^2=\frac{2r}{m}\nabla_{10}^*\nabla_{10}+\frac{2(m-r)}{m}\nabla_{01}^*\nabla_{01}+\frac{1}{4}{\rm Scal}^W+\frac{(2r-m)i}{2m}\rho_\theta\cdot
\end{equation}
where $\nabla_{10}^*\nabla_{10}=-2\sum_{l=1}^m\nabla^W_{e_l^-}\nabla^W_{e_l^+}$ and $\nabla_{01}^*\nabla_{01}=-2\sum_{l=1}^m\nabla^W_{e_l^+}\nabla^W_{e_l^-}$. Here, we choose an orthonormal frame parallel at some point with respect to the Tanaka-Webster connection. Notice that $\nabla^*\nabla=\nabla_{10}^*\nabla_{10}+\nabla_{01}^*\nabla_{01}$ as shown in \cite[p.109]{L:18}.

\begin{rk} {\it We point out from \eqref{eq:nabla^Wt} that the Clifford action of $\rho_\theta$ preserves the space $\Sigma_r(H(M))$, that is $\rho_\theta\cdot\Sigma_r(H(M))\subset \Sigma_r(H(M))$. }
\end{rk}

We finish this section by establishing some identities that we use several times in the next sections. We refer to \cite{P:11} for similar results on K\"ahler manifolds. 

\begin{lem} \label{commutatorformulas} Let $(M^{2m+1},H(M),J)$ be a strictly pseudoconvex CR manifold such that $H(M)$ is spin.  For any $X\in \Gamma(H(M))$ and $\rho\in \Omega^2(H(M))$, we have the following 
\begin{equation}\label{eq:comdy}
D_\theta(X\cdot)=\sum_{l=1}^{2m}e_l\cdot\nabla^W_{e_l}X\cdot-X\cdot D_\theta-2\nabla^W_X.
\end{equation}
\begin{equation}\label{eq:commomegad+}
[D_\theta,\rho]=(d^W\rho+\delta^W\rho)\cdot-2\sum_{l=1}^{2m}(e_l\lrcorner\rho)\cdot\nabla^W_{e_l}.
\end{equation}
\begin{equation}\label{eq:bracketnabla^WD}
[\nabla^W_X,D_\theta]=\frac{1}{2}\rho_\theta(JX)\cdot+\frac{1}{2}\tau(X)\cdot d\theta\cdot-m\tau(JX)\cdot-\sum_{l=1}^{2m}e_l\cdot\nabla^W_{\nabla^W_{e_l}X}-2JX\cdot\nabla^W_T.
\end{equation}
\begin{equation}\label{eq:bracketnabla^WT}
[\nabla^W_T,D_\theta]=-\frac{1}{2} {\rm Ric}^W(T)\cdot-\sum_{l=1}^{2m} \tau(e_l)\cdot\nabla^W_{e_l}.
\end{equation}
For a vector field $X\in H(M)$ parallel at some point, we have
\begin{equation}\label{eq:com}
D_\pm(X^\pm\cdot)=-X^\pm\cdot D_\pm,\, D_\pm(X^\mp\cdot)=-X^\mp\cdot D_\pm-2\nabla^W_{X^\mp},\,[D_\theta^2,X]=\nabla^*\nabla X\cdot-4JX\cdot\nabla^W_T.
\end{equation}
Also, the following identity 
\begin{eqnarray} \label{eq:bracketnabla^Wd2}
[\nabla^W_X,D_\theta^2]&=&JX\cdot {\rm Ric}^W(T)\cdot+\frac{1}{4}X({\rm Scal}^W)+J{\rm Ric}^W T\cdot X\cdot\nonumber\\&&+\frac{1}{2}\nabla^W_{JX}\rho_\theta\cdot-\nabla^W_{\rho_\theta(JX)}+d\theta\cdot R^W(T,X)-d\theta\cdot\nabla^W_{\tau(X)}\nonumber\\&&-2mR^W(T,JX)+2(m-2)\nabla^W_{\tau(JX)}-\nabla^W_{\nabla^*\nabla X}\nonumber\\&&-2\sum_{l=1}^{2m}R^W(X,e_l)\nabla^W_{e_l}+4\nabla^W_{JX}\nabla^W_T
\end{eqnarray}
holds. 
When restricting to $\Sigma_r(H(M))$ and taking $X$  to be parallel at some point, we have that 
 \begin{equation}\label{eq:bracketnabla^Wd+} 
[\nabla^W_{X^\pm},D_\pm]=\pm\frac{i}{2}\rho_\theta(X^\pm)\cdot\mp 2iX^\pm\cdot\nabla^W_T,
 \end{equation}  
 \begin{equation}\label{eq:bracketx+nabla^Wd+} 
[\nabla^W_{X^+},D_-]=2i(r-m)\tau(X^+)\cdot,\,\, [\nabla^W_{X^-},D_+]=2ir\tau(X^-)\cdot.
 \end{equation}
\end{lem}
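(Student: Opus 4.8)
The plan is to prove the identities one by one, using repeatedly the elementary Clifford relation $X\cdot Y + Y\cdot X = -2g_\theta(X,Y)$ for $X,Y\in H(M)$, the Leibniz rule for $\nabla^W$ with respect to Clifford multiplication, and the curvature/Ricci identities \eqref{eq:riccifor}, \eqref{eq:rict}, \eqref{eq:curvatureT} together with Proposition \ref{Riccitdirection}. For \eqref{eq:comdy} I would compute $D_\theta(X\cdot\varphi)=\sum_l e_l\cdot\nabla^W_{e_l}(X\cdot\varphi)=\sum_l e_l\cdot(\nabla^W_{e_l}X)\cdot\varphi+\sum_l e_l\cdot X\cdot\nabla^W_{e_l}\varphi$, and in the last sum commute $e_l\cdot X=-X\cdot e_l-2g_\theta(e_l,X)$, which yields $-X\cdot D_\theta\varphi-2\nabla^W_X\varphi$. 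For \eqref{eq:commomegad+}, writing $\rho=\frac12\sum_{k,l}\rho(e_k,e_l)\,e_k\wedge e_l$ and using $X\cdot(\omega\cdot\varphi)=(X\wedge\omega)\cdot\varphi-(X\lrcorner\omega)\cdot\varphi$, one expands $D_\theta(\rho\cdot\varphi)-\rho\cdot D_\theta\varphi=\sum_l e_l\cdot(\nabla^W_{e_l}\rho)\cdot\varphi+\sum_l[e_l\cdot,\rho\cdot]\nabla^W_{e_l}\varphi$; the first sum is $(d^W\rho+\delta^W\rho)\cdot\varphi$ by the local formulas $d^W=\sum e_l^*\wedge\nabla^W_{e_l}$, $\delta^W=-\sum e_l\lrcorner\nabla^W_{e_l}$ recalled in Section \ref{sec:review}, and the commutator term gives the remaining $-2\sum_l(e_l\lrcorner\rho)\cdot\nabla^W_{e_l}$.

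For \eqref{eq:bracketnabla^WD} I would start from $[\nabla^W_X,D_\theta]\varphi=\sum_l\big(\nabla^W_X(e_l\cdot\nabla^W_{e_l}\varphi)-e_l\cdot\nabla^W_{e_l}\nabla^W_X\varphi\big)$, expand via Leibniz to get $\sum_l e_l\cdot\nabla^W_{\nabla^W_{e_l}X}\varphi$ (sign: this term appears with a minus once one also accounts for $\sum_l(\nabla^W_X e_l)\cdot\nabla^W_{e_l}$ and rearranges with a parallel-frame choice) plus $\sum_l e_l\cdot(\nabla^W_X\nabla^W_{e_l}-\nabla^W_{e_l}\nabla^W_X)\varphi$; the latter is $\sum_l e_l\cdot R^W(X,e_l)\varphi+\sum_l e_l\cdot\nabla^W_{[X,e_l]}\varphi$, and since $[X,e_l]=\nabla^W_X e_l-\nabla^W_{e_l}X-d\theta(X,e_l)T$, the $T$-piece produces $-\sum_l d\theta(X,e_l)\,e_l\cdot\nabla^W_T=-2JX\cdot\nabla^W_T$ (using $d\theta(X,Y)=-2g_\theta(X,JY)$), while $\sum_l e_l\cdot R^W(X,e_l)\varphi$ is exactly the right-hand side of \eqref{eq:riccifor}. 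Collecting terms gives \eqref{eq:bracketnabla^WD}. Identity \eqref{eq:bracketnabla^WT} is the analogous computation in the $T$-direction: $[\nabla^W_T,D_\theta]\varphi=\sum_l e_l\cdot R^W(T,e_l)\varphi+\sum_l e_l\cdot\nabla^W_{[T,e_l]}\varphi$ (with $\nabla^W_T e_l$ absorbed), the curvature sum equals $-\tfrac12{\rm Ric}^W(T)\cdot\varphi$ by Proposition \ref{Riccitdirection}, and $[T,e_l]=\nabla^W_T e_l-{\rm Tor}^W(T,e_l)=\nabla^W_T e_l-\tau(e_l)$ contributes $-\sum_l\tau(e_l)\cdot\nabla^W_{e_l}$. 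The first two lines of \eqref{eq:com} follow by projecting \eqref{eq:comdy} to $H^{1,0}$/$H^{0,1}$ (using $e_l^\pm\cdot X^\pm+X^\pm\cdot e_l^\pm=0$ and $e_l^\pm\cdot X^\mp+X^\mp\cdot e_l^\pm=-2g_\theta(e_l^\pm,X^\mp)$), and the bracket $[D_\theta^2,X]$ is obtained by applying \eqref{eq:comdy} twice (or $D_\theta$ to it once more), the curvature terms assembling into $\nabla^*\nabla X\cdot$ and $-4JX\cdot\nabla^W_T$ after using \eqref{eq:riccifor} and Proposition \ref{Riccitdirection}; this last one is the most bookkeeping-heavy of the elementary cases.

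The main obstacle is \eqref{eq:bracketnabla^Wd2}, $[\nabla^W_X,D_\theta^2]$: I would write $D_\theta^2=D_\theta\circ D_\theta$ and use the Jacobi-type expansion $[\nabla^W_X,D_\theta^2]=D_\theta[\nabla^W_X,D_\theta]+[\nabla^W_X,D_\theta]D_\theta$, then substitute \eqref{eq:bracketnabla^WD} into both occurrences. This forces one to commute $D_\theta$ past each of the terms $\tfrac12\rho_\theta(JX)\cdot$, $\tfrac12\tau(X)\cdot d\theta\cdot$, $m\tau(JX)\cdot$, $\sum_l e_l\cdot\nabla^W_{\nabla^W_{e_l}X}$ and $2JX\cdot\nabla^W_T$, using \eqref{eq:comdy} and \eqref{eq:commomegad+}; the $\nabla^W_T$-term additionally requires \eqref{eq:bracketnabla^WT}, and one must invoke \eqref{eq:diverricciform} (divergence of $\rho_\theta$), \eqref{eq:divric} (divergence of ${\rm Ric}^W$ giving $\tfrac14 X({\rm Scal}^W)$), \eqref{eq:dwrhotheta}, and the curvature identities to reorganize everything into the stated right-hand side. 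The expected difficulty is purely in controlling signs and in recognizing the divergence terms; assuming a parallel frame at the base point and keeping careful track of the $d\theta$-Clifford contributions, the terms collapse to the claimed expression. Finally, \eqref{eq:bracketnabla^Wd+} and \eqref{eq:bracketx+nabla^Wd+} are the $\Sigma_r$-graded components of \eqref{eq:bracketnabla^WD}: restricting to $\Sigma_r(H(M))$ one has $d\theta\cdot=2i(2r-m)\cdot$, the term $\tau(X)\cdot d\theta\cdot$ splits according to whether $\tau(X)$ is paired with its $(1,0)$ or $(0,1)$ part (recall $\tau\circ J=-J\circ\tau$, so $\tau$ interchanges $H^{1,0}$ and $H^{0,1}$, hence $\tau(X^\pm)\in H^{0,1}\!/\!H^{1,0}$), and matching the $\Sigma_{r\pm1}$-components of $\rho_\theta(JX)\cdot$, $\tau(JX)\cdot$ and the $\nabla^W_T$-term against $D_\pm$ yields \eqref{eq:bracketnabla^Wd+} for the ``diagonal'' shift and \eqref{eq:bracketx+nabla^Wd+} for the ``cross'' shift, where the coefficients $2i(r-m)$ and $2ir$ come precisely from combining the $\tfrac12\tau(X)\cdot d\theta\cdot$ eigenvalue $\pm i(2r-m)\tau(X^\pm)$ with the $\mp m\tau(JX^\pm)=\pm im\tau(X^\pm)$ contribution.
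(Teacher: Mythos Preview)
Your proposal is correct and follows essentially the same route as the paper: both handle \eqref{eq:comdy}, \eqref{eq:commomegad+}, \eqref{eq:com} as immediate Clifford/Leibniz computations, derive \eqref{eq:bracketnabla^WD} and \eqref{eq:bracketnabla^WT} by expanding the commutator into a curvature sum plus a $[X,e_l]$-term (invoking \eqref{eq:riccifor} and Proposition~\ref{Riccitdirection} respectively), obtain \eqref{eq:bracketnabla^Wd2} via $[\nabla^W_X,D_\theta^2]=[\nabla^W_X,D_\theta]D_\theta+D_\theta[\nabla^W_X,D_\theta]$ combined with \eqref{eq:diverricciform}, \eqref{eq:dwrhotheta} and \eqref{eq:bracketnabla^WT}, and read off \eqref{eq:bracketnabla^Wd+}, \eqref{eq:bracketx+nabla^Wd+} as the $\Sigma_r$-graded pieces of \eqref{eq:bracketnabla^WD}. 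The only cosmetic difference is that the paper writes out explicitly the two auxiliary identities $\sum_l e_l\cdot(\nabla^W_{e_l}\rho_\theta)(JX)\cdot$ and $\sum_{k,l}e_k\cdot e_l\cdot\nabla^W_{\nabla^W_{e_k}\nabla^W_{e_l}X}$ needed to collapse \eqref{eq:bracketnabla^Wd2}, whereas you describe this step as bookkeeping; the ingredients and the logic are the same.
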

\begin{proof} Equations \eqref{eq:comdy} and \eqref{eq:commomegad+} can be easily proven. In order to check Equation \eqref{eq:bracketnabla^WD}, we choose an orthonormal frame $\{e_l\}_{l=1,\ldots,2m}$ of $H(M)$ parallel at some point and compute 
\begin{eqnarray*} 
[\nabla^W_X,D_\theta]&=&\nabla^W_XD_\theta-D_\theta(\nabla^W_X)\\
&=&\sum_{l=1}^{2m}e_l\cdot\nabla^W_X\nabla^W_{e_l}-\sum_{l=1}^{2m}e_l\cdot\nabla^W_{e_l}\nabla^W_X\\
&=&\sum_{l=1}^{2m}e_l\cdot (R^W(X,e_l)-\nabla^W_{\nabla^W_{e_l}X}-d\theta(X,e_l)\nabla^W_{T})\\
&\bui{=}{\eqref{eq:riccifor}}&\frac{1}{2}\rho_\theta(JX)\cdot+\frac{1}{2}\tau(X)\cdot d\theta\cdot-m\tau(JX)\cdot-\sum_{i=1}^{2m}e_l\cdot\nabla^W_{\nabla^W_{e_l}X}\\&&-2JX\cdot\nabla^W_{T}.
\end{eqnarray*}
In the last equality, we use the fact that $X\lrcorner d\theta=2JX$. To prove \eqref{eq:bracketnabla^WT}, we make the same computations as before to get
\begin{eqnarray*} 
[\nabla^W_T,D_\theta]&=&\sum_{l=1}^{2m} e_l\cdot( R^W(T,e_l)+\nabla^W_{[T,e_l]})\\
&=&-\frac{1}{2} {\rm Ric}^W(T)\cdot-\sum_{l=1}^{2m} \tau(e_l)\cdot\nabla^W_{e_l},
\end{eqnarray*}
where in the last equality we use the Ricci identity in Proposition \ref{Riccitdirection} and the fact that $\tau$ is a symmetric endomorphism. Equations in \eqref{eq:com} can be easily deduced from \eqref{eq:comdy}. For \eqref{eq:bracketnabla^Wd2}, we use \eqref{eq:bracketnabla^WD} to write
\begin{eqnarray*} 
[\nabla^W_X,D_\theta^2]&=&[\nabla^W_X,D_\theta]D_\theta+D_\theta([\nabla^W_X,D_\theta])\\
&=&\frac{1}{2}\rho_\theta(JX)\cdot D_\theta+\frac{1}{2}\tau(X)\cdot d\theta\cdot D_\theta-m\tau(JX)\cdot D_\theta\\&&-2JX\cdot\nabla^W_T D_\theta
+D_\theta\Big(\frac{1}{2}\rho_\theta(JX)\cdot +\frac{1}{2}\tau(X)\cdot d\theta\cdot\\&&-m\tau(JX)\cdot -\sum_{l=1}^{2m}e_l\cdot\nabla^W_{\nabla^W_{e_l}X}-2JX\cdot\nabla^W_T\Big).
\end{eqnarray*}
Using \eqref{eq:commomegad+} for the $2$-form $d\theta$, we have that $D_\theta(d\theta \cdot)=d\theta\cdot D_\theta-4\sum_{l=1}^{2m}Je_l\cdot\nabla^W_{e_l}$, and the above commutator becomes equal to 
\begin{eqnarray*} 
[\nabla^W_X,D_\theta^2]
&=&-2JX\cdot[\nabla^W_T, D_\theta]+\frac{1}{2}\sum_{l=1}^{2m}e_l\cdot(\nabla^W_{e_l}\rho_\theta)(JX)\cdot-\nabla^W_{\rho_\theta(JX)}\\&&+\frac{1}{2}\sum_{l=1}^{2m}e_l\cdot(\nabla^W_{e_l}\tau)(X)\cdot d\theta\cdot+2\sum_{l=1}^{2m}\tau(X)\cdot Je_l\cdot\nabla^W_{e_l}\\&&-d\theta\cdot\nabla^W_{\tau(X)}-m\sum_{l=1}^{2m} e_l\cdot(\nabla^W_{e_l}\tau)(JX)\cdot+2m\nabla^W_{\tau(JX)}\\&&-\sum_{k,l=1}^{2m}e_k\cdot e_l\cdot\nabla^W_{\nabla^W_{e_k}\nabla^W_{e_l}X}+4\nabla^W_{JX}\nabla^W_T.
\end{eqnarray*}
To compute the different terms in the above equality, we use \eqref{eq:ddeltatau}, \eqref{eq:bracketnabla^WT} as well as 
\begin{multline*}
\sum_{l=1}^{2m} e_l\cdot(\nabla^W_{e_l}\rho_\theta)(JX)\cdot=\sum_{k,l=1}^{2m}(\nabla^W_{e_l}\rho_\theta)(JX,e_k)e_l\cdot e_k\cdot\\
=\sum_{l=1}^{2m}(\nabla^W_{e_l}\rho_\theta)(e_l,JX)-\sum_{l<k}((\nabla^W_{e_l}\rho_\theta)(e_k,JX)-(\nabla^W_{e_k}\rho_\theta)(e_l,JX))e_l\cdot e_k\cdot\\
\bui{=}{\eqref{eq:diverricciform}}\frac{1}{2}X({\rm Scal}^W)-2(m-1){\rm Ric}^W(T,JX)-\sum_{l<k}((d^W\rho_\theta)(e_l,e_k,JX)-(\nabla^W_{JX}\rho_\theta)(e_l,e_k))e_l\cdot e_k\cdot\\
\bui{=}{\eqref{eq:dwrhotheta}}\frac{1}{2}X({\rm Scal}^W)-2m{\rm Ric}^W(T,JX)+{\rm Ric}^W(T,X)d\theta\cdot+2J{\rm Ric}^W(T)\cdot X\cdot+(\nabla^W_{JX}\rho_\theta)\cdot
\end{multline*}
and, \\
\begin{eqnarray*}
\sum_{k,l=1}^{2m}e_k\cdot e_l\cdot\nabla^W_{\nabla^W_{e_k}\nabla^W_{e_l}X}&=&-\sum_{k=1}^{2m}\nabla^W_{\nabla^W_{e_k}\nabla^W_{e_k}X}+\frac{1}{2}\sum_{k,l=1}^{2m}e_k\cdot e_l\cdot\nabla^W_{R^W(e_k,e_l)X}\\
&=&\nabla^W_{\nabla^*\nabla X}+\frac{1}{2}\sum_{k,l,t=1}^{2m}R^W(e_k,e_l,X,e_t)e_k\cdot e_l\cdot\nabla^W_{e_t}\\
&\bui{=}{\eqref{eq:bianchiidentity2}}&\nabla^W_{\nabla^*\nabla X}+2\sum_{l=1}^{2m}R^W(X,e_l)\nabla^W_{e_l}+2\sum_{l=1}^{2m}JX\cdot\tau(e_l)\cdot\nabla^W_{e_l}\\&&+2\sum_{l=1}^{2m}\tau(X)\cdot Je_l\cdot\nabla^W_{e_l}+4\nabla^W_{\tau(JX)}
\end{eqnarray*}
to get that the commutator $[\nabla^W_X,D_\theta^2]$ becomes equal to
\begin{eqnarray*} 
[\nabla^W_X,D_\theta^2]&=&JX\cdot {\rm Ric}^W(T)\cdot+\frac{1}{4}X({\rm Scal}^W)+J{\rm Ric}^W T\cdot X\cdot\\&&+\frac{1}{2}\nabla^W_{JX}\rho_\theta\cdot-\nabla^W_{\rho_\theta(JX)}+d\theta\cdot R^W(T,X)-d\theta\cdot\nabla^W_{\tau(X)}\\&&-2mR^W(T,JX)+2(m-2)\nabla^W_{\tau(JX)}-\nabla^W_{\nabla^*\nabla X}\\&&-2\sum_{l=1}^{2m}R^W(X,e_l)\nabla^W_{e_l}+4\nabla^W_{JX}\nabla^W_T,
\end{eqnarray*}
which gives the desired result. 
Equations \eqref{eq:bracketnabla^Wd+} and \eqref{eq:bracketx+nabla^Wd+} can be easily deduced from \eqref{eq:bracketnabla^WD} when restricting to $\Sigma_r(H(M))$ and replacing $X$ by $X^\pm$ where we also use the fact that $\rho_\theta(X^\pm) = \rho_\theta(X)^\pm$ and $\tau(X^\pm)=\tau(X)^\mp$ which means $\rho_\theta(X^\pm)\cdot \Sigma_r(H(M))\subset \Sigma_{r\pm 1}(H(M))$ and $\tau(X^\pm)\cdot\Sigma_r(H(M))\subset
\Sigma_{r\mp 1}(H(M))$. This finishes the proof.
\end{proof}
\section{Eigenvalue estimates}\label{sec:eigest}
In \cite{L:18}, the author gives an estimate for the eigenvalues of the Kohn-Dirac operator, by assuming the non-negativity of the Webster Ricci tensor. The proof is based on introducing the CR twistor operators on $\Sigma_r(H(M))$ for each $r$ as an adaptation of the usual ones defined on K\"ahler manifolds \cite{G:93, Hij1, K:86}. In the following, we establish a new estimate where only a lower bound on the Webster scalar curvature is required. For this, we follow the computations done in \cite{P:11, L:18}. The CR twistor operator of type $r\in\{0,\ldots,m\}$ is defined as the projection of the covariant derivative onto the kernel of the Clifford multiplication in $\Sigma_r(H(M))$. That means, for any spinor $\varphi\in\Sigma_r(H(M))$, the operator is 
$$
T_r\varphi:=\nabla^W\varphi+\frac{1}{2(m-r+1)}\sum_{l=1}^{2m}e_l\otimes e_l^+\cdot D_{-}\varphi+\frac{1}{2(r+1)}\sum_{l=1}^{2m}e_l\otimes e_l^-\cdot D_{+}\varphi.$$
Hence, a spinor field $\varphi\in\Sigma_r(H(M))$ is called a CR twistor spinor if $T_r\varphi=0$, that is, 
\begin{equation}\label{eq:twistordef}
\nabla^W_X\varphi=-\frac{1}{2(m-r+1)}X^+\cdot D_{-}\varphi-\frac{1}{2(r+1)}X^-\cdot D_{+}\varphi,
\end{equation}
for all $X\in\Gamma(H(M))$. It is not difficult to check that if $\varphi$ is a CR twistor spinor in $\Sigma_r(H(M))$, then $\mathfrak{j}\varphi$ is also a CR twistor spinor in $\Sigma_{m-r}(H(M))$.  This is due to the fact that $\mathfrak{j}$ is a parallel endomorphism and satisfies $D_\pm\mathfrak{j}=\mathfrak{j}D_\mp$. Hence the space of CR twistor spinors is of even complex dimension when $r\neq\frac{m}{2}$. Now a direct computation as in \cite[Lem. 2.5]{P:11} shows that 
\begin{eqnarray}\label{eq:nabla^Wdplusminus}
|\nabla^W\varphi|^2&=& |T_r\varphi|^2+\frac{1}{2(r+1)}|D_+\varphi|^2+\frac{1}{2(m-r+1)}|D_-\varphi|^2\nonumber\\
&\geq&\frac{1}{2(r+1)}|D_+\varphi|^2+\frac{1}{2(m-r+1)}|D_-\varphi|^2
\end{eqnarray}
with equality if and only if $\varphi$ is a CR twistor spinor.  Recall that a spinor field $\varphi\in \Sigma_r(H(M))$ is called  holomorphic (resp. antiholomorphic) if $D_+\varphi=0$ (resp. $D_-\varphi=0)$. Now, we have

\begin{thm} \label{thm:estimateeigenvalue}
Let $(M^{2m+1}, H(M),J)$ be a closed strictly pseudoconvex CR manifold with positive Webster scalar curvature ${\rm Scal}^W$. Assume that $H(M)$ is spin. Then any eigenvalue $\lambda$ of $D_\theta^2$ restricted to $\Sigma_r(H(M))$  satisfies 
\begin{equation*}
\lambda \geq 
a_r \mathop{\rm inf}\limits_M\left(\frac{{\rm Scal}^W}{4}-\frac{\beta_{r,\rm max}^2}{2{\rm Scal}^W}-\frac{(2r-m)^2}{8m^2}{\rm Scal}^W\right) ,
\end{equation*}
where $a_r:=\frac{m\,\,{\rm max}(r+1,m-r+1)}{m\,\,{\rm max}(r+1,m-r+1)-{\rm min}(r,m-r)}$ for any $r\in \{0,\ldots,m\}$ and $\beta_{r,\rm max}$ denotes the biggest eigenvalue (in absolute value) of the symmetric endomorphism $i\rho_\theta\cdot$ restricted to $\Sigma_r(H(M))$ if $r\neq \frac{m}{2}$ and $\beta_{\frac{m}{2},\rm max}:=0$. If equality is realized, the corresponding eigenspinor $\varphi$ satisfies one of the following: 
\begin{enumerate} 
\item For $r=\frac{m}{2}: T_r\varphi=0$ and the Webster scalar curvature ${\rm Scal}^W$ is positive constant.
\item For $r>\frac{m}{2}$ (resp. $r<\frac{m}{2}$): $T_r\varphi=0, D_-\varphi=0$ (resp. $D_+\varphi=0$), $\rho_\theta\cdot\varphi=\frac{{\rm Scal}^W}{4m}d\theta\cdot\varphi$ and the Webster scalar curvature is constant.
\end{enumerate}
\end{thm}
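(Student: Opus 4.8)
The plan is to carry out a Kirchberg-type estimate adapted to the Kohn-Dirac operator, combining the refined Schr\"odinger-Lichnerowicz formula \eqref{eq:Lich} with the twistor inequality \eqref{eq:nabla^Wdplusminus}. Let $0\neq\varphi\in\Gamma(\Sigma_r(H(M)))$ with $D_\theta^2\varphi=\lambda\varphi$. Since $D_\pm^2=0$, $D_\theta^2=D_+D_-+D_-D_+$ and the decomposition \eqref{eq:decospin} is orthogonal, one has $\|D_+\varphi\|^2+\|D_-\varphi\|^2=\|D_\theta\varphi\|^2=\lambda\|\varphi\|^2$ and $\|\nabla^W\varphi\|^2=\|\nabla_{10}\varphi\|^2+\|\nabla_{01}\varphi\|^2$. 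Integrating \eqref{eq:Lich} against $\varphi$, and using that Clifford multiplication by the real $2$-form $\rho_\theta$ is skew-Hermitian (so $\langle i\rho_\theta\cdot\varphi,\varphi\rangle$ is real), I obtain
$$
\lambda\|\varphi\|^2=\frac{2r}{m}\|\nabla_{10}\varphi\|^2+\frac{2(m-r)}{m}\|\nabla_{01}\varphi\|^2+\frac14\int_M{\rm Scal}^W|\varphi|^2+\frac{2r-m}{2m}\int_M\langle i\rho_\theta\cdot\varphi,\varphi\rangle .
$$

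Then I would bound the first two terms below by $\frac{2\min(r,m-r)}{m}\|\nabla^W\varphi\|^2$, apply \eqref{eq:nabla^Wdplusminus} in the form $\|\nabla^W\varphi\|^2\geq\frac1{2(r+1)}\|D_+\varphi\|^2+\frac1{2(m-r+1)}\|D_-\varphi\|^2$, and then bound this further below by $\frac1{2\max(r+1,m-r+1)}(\|D_+\varphi\|^2+\|D_-\varphi\|^2)=\frac{\lambda}{2\max(r+1,m-r+1)}\|\varphi\|^2$. For the curvature term I would use $|\langle i\rho_\theta\cdot\varphi,\varphi\rangle|\leq\beta_{r,\rm max}|\varphi|^2$ pointwise together with the elementary bound $\frac{|2r-m|}{2m}\beta_{r,\rm max}\leq\frac{\beta_{r,\rm max}^2}{2\,{\rm Scal}^W}+\frac{(2r-m)^2}{8m^2}{\rm Scal}^W$, valid since ${\rm Scal}^W>0$. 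Collecting everything gives
$$
\Bigl(1-\frac{\min(r,m-r)}{m\max(r+1,m-r+1)}\Bigr)\lambda\|\varphi\|^2\ \geq\ \int_M\Bigl(\frac{{\rm Scal}^W}{4}-\frac{\beta_{r,\rm max}^2}{2\,{\rm Scal}^W}-\frac{(2r-m)^2}{8m^2}{\rm Scal}^W\Bigr)|\varphi|^2 ,
$$
and a short case distinction ($r<\frac m2$, $r>\frac m2$, $r=\frac m2$) shows $a_r=\bigl(1-\min(r,m-r)/(m\max(r+1,m-r+1))\bigr)^{-1}$, so replacing the integrand by its infimum over $M$ yields the claimed estimate.

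For the equality case I would go back and see which inequalities must be saturated wherever $\varphi\neq0$. Equality in \eqref{eq:nabla^Wdplusminus} is precisely $T_r\varphi=0$; saturating the comparison $\frac{2r}{m},\frac{2(m-r)}{m}\geq\frac{2\min(r,m-r)}{m}$ forces $\nabla_{01}\varphi=0$ if $r<\frac m2$ and $\nabla_{10}\varphi=0$ if $r>\frac m2$, hence $D_+\varphi=0$ (resp. $D_-\varphi=0$) since $D_\pm\varphi=2\sum_{l=1}^m e_l^\pm\!\cdot\nabla^W_{e_l^\mp}\varphi$; saturating the curvature estimate, together with the elementary bound, forces $\beta_{r,\rm max}=\frac{|2r-m|}{2m}{\rm Scal}^W$ and $i\rho_\theta\cdot\varphi=\frac{(m-2r){\rm Scal}^W}{2m}\varphi$, equivalently (using $d\theta\cdot\varphi=2i(2r-m)\varphi$) $\rho_\theta\cdot\varphi=\frac{{\rm Scal}^W}{4m}d\theta\cdot\varphi$. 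Substituting $\beta_{r,\rm max}$ back, the integrand equals $\frac{{\rm Scal}^W}{4}\bigl(1-\frac{(2r-m)^2}{m^2}\bigr)$, a strictly positive multiple of ${\rm Scal}^W$ for $0<r<m$, and it must coincide with its infimum on $\{\varphi\neq0\}$; since a nonzero CR twistor spinor that is an eigenspinor of $D_\theta^2$ has no zeros --- prolonging \eqref{eq:twistordef} with the commutator identities of Lemma \ref{commutatorformulas} and \eqref{eq:bracketnabla^WT} yields a closed first-order linear system on $TM$ for $(\varphi,D_+\varphi,D_-\varphi)$, whose zero set is then empty or all of $M$ --- this forces ${\rm Scal}^W$ to be constant, hence a positive constant by compactness. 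For $r=\frac m2$ the curvature term is absent and equality reduces to $T_r\varphi=0$ with ${\rm Scal}^W\equiv\inf_M{\rm Scal}^W$; the extreme indices $r\in\{0,m\}$ need a small additional check.

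The step I expect to be the main obstacle is this last one: passing from ``${\rm Scal}^W$ constant on $\{\varphi\neq0\}$'' to ``${\rm Scal}^W$ constant on $M$''. It requires prolonging the overdetermined twistor system so that $(\varphi,D_+\varphi,D_-\varphi)$ obeys a closed first-order linear system in \emph{all} tangent directions, which in turn means one must verify that the $T$-derivatives of $D_\pm\varphi$ are again controlled by $\varphi$ and $D_\pm\varphi$ --- this is where the $T$-direction commutator identities and the Ricci identity in $T$-direction from Section~\ref{sec:spincr} enter, and also where the borderline cases $r=0,m$ are dealt with. Everything else is bookkeeping: the elementary identity for $a_r$, the sign conventions in the $\rho_\theta$-term, and tracking which Cauchy--Schwarz and coefficient inequalities are saturated in each range of $r$.
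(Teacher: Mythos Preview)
Your argument is essentially the paper's own proof: integrate the refined Lichnerowicz formula \eqref{eq:Lich}, pass from $\frac{2r}{m}\|\nabla_{10}\varphi\|^2+\frac{2(m-r)}{m}\|\nabla_{01}\varphi\|^2$ to $\frac{2\min(r,m-r)}{m}\|\nabla^W\varphi\|^2$, invoke \eqref{eq:nabla^Wdplusminus}, and absorb into $\lambda\|\varphi\|^2$ via $\|D_+\varphi\|^2+\|D_-\varphi\|^2=\lambda\|\varphi\|^2$. The only cosmetic difference is in the curvature term: the paper expands the pointwise square
\[
\Bigl|\tfrac{1}{\sqrt{{\rm Scal}^W}}\rho_\theta\cdot\varphi-\tfrac{\sqrt{{\rm Scal}^W}}{4m}d\theta\cdot\varphi\Bigr|^2\geq 0
\]
and then bounds $|\rho_\theta\cdot\varphi|^2\leq \beta_{r,\max}^2|\varphi|^2$, whereas you first bound $|\langle i\rho_\theta\cdot\varphi,\varphi\rangle|\leq\beta_{r,\max}|\varphi|^2$ and then apply the AM--GM inequality. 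These are the same inequality in two dressings; both yield the identical integrand and the same equality conditions.

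For the equality discussion you are actually more careful than the paper. The paper simply observes that $\rho_\theta\cdot\varphi=\frac{{\rm Scal}^W}{4m}d\theta\cdot\varphi$ together with $|\rho_\theta\cdot\varphi|^2=\beta_{r,\max}^2|\varphi|^2$ forces $\beta_{r,\max}^2=\frac{(2r-m)^2}{4m^2}({\rm Scal}^W)^2$, whence the integrand equals $\frac{r(m-r)}{m^2}{\rm Scal}^W$ and concludes constancy from $\frac{\lambda}{a_r}=\frac{r(m-r)}{m^2}{\rm Scal}^W$ --- without ever discussing the zero set of $\varphi$. Your prolongation sketch (closing $(\varphi,D_+\varphi,D_-\varphi)$ to a first-order linear system in all tangent directions, including $T$, via \eqref{eq:bracketnabla^WD}, \eqref{eq:bracketnabla^WT} and \eqref{eq:nabla^Wt}) is the honest way to fill that gap, and it is exactly the step the paper leaves implicit. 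So your plan is correct and, on this point, cleaner than the original; just make sure you actually write out the $T$-direction derivatives of $D_\pm\varphi$ explicitly rather than leaving them as a promise.
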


\begin{proof} By applying \eqref{eq:Lich} to an eigenspinor $\varphi\in \Gamma(\Sigma_r(H(M))$ of $D_\theta^2$ associated with the eigenvalue $\lambda$, we get after taking the Hermitian product with $\varphi$ and integrating over $M$, the following
\begin{eqnarray}\label{eq:eigenvalueestimateproof}
\lambda\int_M|\varphi|^2{\rm vol} &=&\frac{2r}{m}\int_M|\nabla^W_{10}\varphi|^2{\rm vol}+\frac{2(m-r)}{m}\int_M|\nabla^W_{01}\varphi|^2{\rm vol}\nonumber\\&&+\frac{1}{4}\int_M{\rm Scal}^W|\varphi|^2{\rm vol}+\frac{(2r-m)i}{2m}\int_M\langle\rho_\theta\cdot\varphi,\varphi\rangle {\rm vol}\nonumber\\
&\geq &\frac{2 {\rm min}(r,m-r)}{m}\int_M|\nabla^W\varphi|^2 {\rm vol}+\frac{1}{4}\int_M{\rm Scal}^W|\varphi|^2{\rm vol}\nonumber\\&&
+\frac{(2r-m)i}{2m}\int_M\langle\rho_\theta\cdot\varphi,\varphi\rangle {\rm vol}\nonumber\\
&\bui{\geq}{\eqref{eq:nabla^Wdplusminus}} & 
\frac{{\rm min}(r,m-r)}{m\,\,{\rm max}(r+1,m-r+1)}\int_M\lambda|\varphi|^2 {\rm vol} +\frac{1}{4}\int_M{\rm Scal}^W|\varphi|^2{\rm vol}\nonumber\\&&+\frac{(2r-m)i}{2m}\int_M\langle\rho_\theta\cdot\varphi,\varphi\rangle {\rm vol}.
\end{eqnarray}
For $r=\frac{m}{2}$, we clearly get the required estimate. In this case, equality is characterized by the fact that the Webster scalar curvature is constant and the existence of a CR twistor spinor in $\Sigma_{\frac{m}{2}}(H(M))$. Assume now that $r\neq \frac{m}{2}$. We use the following pointwise estimate: 
$$\left|\frac{1}{\sqrt{{\rm Scal}^W}}\rho_\theta\cdot\varphi-\frac{\sqrt{{\rm Scal}^W}}{4m}d\theta\cdot\varphi\right|^2\geq 0,$$
to say that 
\begin{eqnarray*}
\frac{(2r-m)i}{m}\langle\rho_\theta\cdot\varphi,\varphi\rangle&\geq &-\frac{1}{{\rm Scal}^W} |\rho_\theta\cdot\varphi|^2-\frac{(2r-m)^2{\rm Scal}^W}{4m^2}|\varphi|^2\\
&\geq &-\frac{1}{{\rm Scal}^W}\beta_{r,\rm max}^2|\varphi|^2 -\frac{(2r-m)^2{\rm Scal}^W}{4m^2}|\varphi|^2.\\
\end{eqnarray*}
Therefore,  Inequality \eqref{eq:eigenvalueestimateproof} reduces to 
$$\frac{\lambda}{a_r}\int_M|\varphi|^2 {\rm vol}\geq\int_M\left(\frac{{\rm Scal}^W}{4}-\frac{\beta_{r,\rm max}^2}{2{\rm Scal}^W}-\frac{(2r-m)^2}{8m^2}{\rm Scal}^W\right)|\varphi|^2{\rm vol},$$
with $a_r:=\frac{m\,\,{\rm max}(r+1,m-r+1)}{m\,\,{\rm max}(r+1,m-r+1)-{\rm min}(r,m-r)}$, which gives the required estimate. If now, the equality is attained for $r>\frac{m}{2}$ (resp. $r<\frac{m}{2}$) , then we have equality in all the above inequalities. Therefore, $\varphi$ is an antiholomorphic (resp. holomorphic) CR twistor spinor which satisfies $\rho_\theta\cdot\varphi=\frac{{\rm Scal}^W}{4m}d\theta\cdot\varphi$ on the one hand and $|\rho_\theta\cdot\varphi|^2=\beta_{r,max}^2|\varphi|^2$ on the other hand. Thus, $\beta_{r,max}^2=\frac{({\rm Scal}^W)^2}{4m^2}(2r-m)^2$, and hence, 
$$\frac{\lambda}{a_r}=\frac{r(m-r)}{m^2}{\rm Scal}^W,$$ 
which means the Webster scalar curvature should be constant. 
\end{proof}

As a direct corollary, we obtain the following estimate for pseudo-Einstein CR manifolds. In this case, $\rho_\theta=\frac{{\rm Scal}^W}{4m}d\theta$ and, thus, $\beta_{r,max}=-\frac{{\rm Scal}^W}{2m}(2r-m)$. Hence, we get 
\begin{cor} Let $(M^{2m+1},H(M),J)$ be a closed strictly pseudoconvex CR manifold such that $H(M)$ is spin. Assume that $M$ is pseudo-Einstein, then we have the following estimate
\begin{equation*}
\lambda \geq \frac{r(m-r)a_r}{m^2}\mathop{\rm inf}\limits_M({\rm Scal}^W),
\end{equation*}
for any $r\in \{0,\ldots,m\}$.
\end{cor}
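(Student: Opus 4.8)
The plan is to deduce the estimate directly from Theorem~\ref{thm:estimateeigenvalue} by evaluating the quantity $\beta_{r,\rm max}$ in the pseudo-Einstein situation. First I would recall that the pseudo-Einstein condition means $\rho_\theta=\frac{{\rm Scal}^W}{4m}d\theta$, so that the Clifford endomorphism $i\rho_\theta\cdot$ acts on $\Sigma_r(H(M))$ as $\frac{i\,{\rm Scal}^W}{4m}\,d\theta\cdot$; since by definition $d\theta\cdot\varphi=2i(2r-m)\varphi$ for $\varphi\in\Sigma_r(H(M))$, this endomorphism is simply multiplication by the scalar $-\frac{(2r-m)}{2m}{\rm Scal}^W$. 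In particular it has a single (pointwise) eigenvalue, whence $\beta_{r,\rm max}=-\frac{{\rm Scal}^W}{2m}(2r-m)$ and $\beta_{r,\rm max}^2=\frac{(2r-m)^2}{4m^2}({\rm Scal}^W)^2$. For $r=\frac m2$ this is consistent with the convention $\beta_{\frac m2,\rm max}:=0$.

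Next I would substitute this value into the bracket appearing in Theorem~\ref{thm:estimateeigenvalue}. The term $\frac{\beta_{r,\rm max}^2}{2{\rm Scal}^W}$ becomes $\frac{(2r-m)^2}{8m^2}{\rm Scal}^W$, so the bracket collapses to
$$\left(\frac14-\frac{(2r-m)^2}{4m^2}\right){\rm Scal}^W=\frac{m^2-(2r-m)^2}{4m^2}\,{\rm Scal}^W=\frac{r(m-r)}{m^2}\,{\rm Scal}^W,$$
using the elementary identity $m^2-(2r-m)^2=4r(m-r)$. Finally, since $a_r>0$ and $r(m-r)\ge0$ for every $r\in\{0,\ldots,m\}$, the constant $\frac{r(m-r)a_r}{m^2}$ is nonnegative and may be pulled out of the infimum, giving
$$\lambda\ \ge\ a_r\inf_M\!\left(\frac{r(m-r)}{m^2}{\rm Scal}^W\right)=\frac{r(m-r)a_r}{m^2}\inf_M({\rm Scal}^W),$$
which is the claimed bound.

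There is essentially no obstacle here: the statement is a straightforward specialization of the main theorem, and the only points that need (trivial) verification are that the Clifford action of $\rho_\theta$ indeed preserves $\Sigma_r(H(M))$ --- which was already observed in the remark following \eqref{eq:Lich} --- together with the algebraic simplification $m^2-(2r-m)^2=4r(m-r)$. The one mild caveat worth recording is that, unlike the Riemannian pseudo-Einstein analogue, ${\rm Scal}^W$ need not be constant here, so it must be kept inside the infimum throughout.
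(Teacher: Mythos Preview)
Your proof is correct and follows exactly the paper's approach: the paper also computes $\beta_{r,\max}=-\frac{{\rm Scal}^W}{2m}(2r-m)$ from the pseudo-Einstein relation $\rho_\theta=\frac{{\rm Scal}^W}{4m}d\theta$ and substitutes into Theorem~\ref{thm:estimateeigenvalue}. Your write-up is in fact more explicit than the paper's (which gives only the value of $\beta_{r,\max}$ and states the corollary), spelling out the algebraic simplification $m^2-(2r-m)^2=4r(m-r)$ and the nonnegativity of the constant factor.
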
 
When $r\geq \frac{m}{2}$, the above estimate coincides with the one in \cite[Thm. 11.2]{L:18}. In the following, we state the characterisation of the existence of CR twistor spinors in $\Sigma_{\frac{m}{2}}(H(M))$.
\begin{prop} \label{pro:middledimensioncha} Let $(M^{2m+1},H(M),J)$ be a strictly pseudoconvex CR manifold. Assume that $H(M)$ is spin and let $\varphi$ be a CR twistor spinor in $\Sigma_{\frac{m}{2}}(H(M))$. Then we have,
$D_\theta^2\varphi=\frac{m+2}{4(m+1)}{\rm Scal}^W\varphi$. If moreover the Webster scalar curvature ${\rm Scal}^W$ is positive constant and the manifold is closed, we can write
$$\varphi=\varphi_1+\varphi_2:=\frac{1}{\lambda} D_-D_+\varphi+\frac{1}{\lambda} D_+D_-\varphi$$
with $\lambda:=\frac{(m+2){\rm Scal}^W}{4(m+1)}$ and $\varphi_1$ (resp. $\varphi_2$) is an antiholomorphic (resp. holomorphic) CR twistor spinor in $\Sigma_{\frac{m}{2}}(H(M))$.  
\end{prop}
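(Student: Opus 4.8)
The plan is to exploit the Schrödinger-Lichnerowicz-type formula \eqref{eq:Lich} specialized to $r=\frac{m}{2}$, together with the twistor relation \eqref{eq:nabla^Wdplusminus}. For a CR twistor spinor $\varphi \in \Sigma_{\frac m2}(H(M))$ we have $T_r\varphi = 0$, so $|\nabla^W\varphi|^2 = \frac{1}{m+2}|D_+\varphi|^2 + \frac{1}{m+2}|D_-\varphi|^2 = \frac{1}{m+2}|D_\theta\varphi|^2$ pointwise, using that for $r=\frac m2$ the coefficients $2(r+1)$ and $2(m-r+1)$ both equal $m+2$ and that $D_+\varphi \in \Sigma_{\frac m2 + 1}$, $D_-\varphi \in \Sigma_{\frac m2 -1}$ are orthogonal. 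Since $r=\frac m2$ makes the $\rho_\theta$-term in \eqref{eq:Lich} vanish and $\frac{2r}{m} = \frac{2(m-r)}{m} = 1$, that formula reads $D_\theta^2\varphi = \nabla^*\nabla\varphi + \frac14 {\rm Scal}^W\varphi$. Taking the Hermitian product with $\varphi$ and using $\langle \nabla^*\nabla\varphi,\varphi\rangle$ pointwise is not literally valid (it is only an integrated identity), so instead I would apply $D_\theta$ a second time to the twistor equation or, more cleanly, compute $D_\theta^2\varphi$ directly: apply $D_\theta = \sum e_l\cdot\nabla^W_{e_l}$ to \eqref{eq:twistordef}, which gives $D_\theta^2\varphi$ in terms of $D_+$ and $D_-$ applied to $D_-\varphi$ and $D_+\varphi$. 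Using $D_+^2 = D_-^2 = 0$ and the commutator relations \eqref{eq:bracketnabla^Wd+}, the cross terms produce $\frac{m+2}{m+1}$ times the "naive" Lichnerowicz contribution; carrying the constants through yields $D_\theta^2\varphi = \frac{m+2}{4(m+1)}{\rm Scal}^W\varphi$. The main obstacle is bookkeeping the constants correctly, and in particular confirming that the $\nabla^W_T$ terms cancel — this uses that on $\Sigma_{\frac m2}$ the Clifford action of $d\theta$ is zero, so the extra $d\theta\cdot\nabla^W_T$ from the Schrödinger-Lichnerowicz formula and the $X^\pm\cdot\nabla^W_T$ contributions from \eqref{eq:bracketnabla^Wd+} drop out or combine to zero.

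For the second part, assume ${\rm Scal}^W$ is a positive constant and $M$ closed, and set $\lambda = \frac{(m+2){\rm Scal}^W}{4(m+1)}$, which is the eigenvalue just found. First I would establish the decomposition $\varphi = \varphi_1 + \varphi_2$. From $D_\theta^2 = D_+D_- + D_-D_+$ and $D_\theta^2\varphi = \lambda\varphi$ we get $\lambda\varphi = D_+D_-\varphi + D_-D_+\varphi$, so defining $\varphi_2 := \frac1\lambda D_+D_-\varphi$ and $\varphi_1 := \frac1\lambda D_-D_+\varphi$ gives $\varphi = \varphi_1+\varphi_2$ immediately (note $\varphi_1,\varphi_2 \in \Sigma_{\frac m2}$ since $D_+$ raises and $D_-$ lowers the grading by one). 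Then $D_+\varphi_1 = \frac1\lambda D_+D_-D_+\varphi = \frac1\lambda D_+D_\theta^2\varphi = \lambda^{-1}\cdot\lambda D_+\varphi = D_+\varphi$... but I need $D_+\varphi_1 = 0$, i.e. antiholomorphicity of $\varphi_1$; this follows from $D_+^2 = 0$: indeed $D_+\varphi_1 = \frac1\lambda D_+D_-D_+\varphi$ and since $D_+D_- + D_-D_+ = D_\theta^2$ acts as $\lambda$ on $\Sigma_{\frac m2}$ while $D_-D_+$ maps $\Sigma_{\frac m2} \to \Sigma_{\frac m2}$, one checks $D_+(D_-D_+\varphi) = (D_+D_-)(D_+\varphi) = \lambda D_+\varphi - D_-D_+(D_+\varphi) = \lambda D_+\varphi$ using $D_+^2=0$, hence $D_+\varphi_1 = D_+\varphi \ne 0$ in general — so the correct statement to prove is that $\varphi_1$ is antiholomorphic meaning $D_-\varphi_1 = 0$, which holds because $D_-\varphi_1 = \frac1\lambda D_-D_-D_+\varphi = 0$ by $D_-^2 = 0$. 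Symmetrically $D_+\varphi_2 = \frac1\lambda D_+D_+D_-\varphi = 0$, so $\varphi_2$ is holomorphic. I would double-check the holomorphic/antiholomorphic naming convention in the paper's sense (the paper calls $D_+\varphi = 0$ holomorphic) and adjust the assignment of $\varphi_1,\varphi_2$ accordingly.

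Finally I must show each $\varphi_i$ is itself a CR twistor spinor. The cleanest route is: $\varphi_1, \varphi_2$ are eigenspinors of $D_\theta^2$ with eigenvalue $\lambda$ (since $D_\theta^2$ commutes with $D_\pm$, being $D_+D_- + D_-D_+$, up to checking $[D_\theta^2, D_\pm] = 0$, which follows from $D_\pm^2 = 0$ and $D_+D_- + D_-D_+ = D_\theta^2$), and $D_-\varphi_1 = 0$, $D_+\varphi_2 = 0$. Then invoking the equality discussion: apply the chain of inequalities \eqref{eq:eigenvalueestimateproof} from the proof of Theorem \ref{thm:estimateeigenvalue} at level $r = \frac m2$ with ${\rm Scal}^W$ constant to the eigenspinor $\varphi_1$. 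The estimate there forces $\lambda \ge \frac{{\rm Scal}^W}{4} \cdot a_{m/2} = \frac{(m+2){\rm Scal}^W}{4(m+1)}$, with equality iff $T_{m/2}\varphi_1 = 0$; since $\varphi_1$ has exactly eigenvalue $\lambda$, equality holds, hence $\varphi_1$ is a CR twistor spinor, and likewise $\varphi_2$. Alternatively, and perhaps more directly, one substitutes $D_+\varphi_2 = 0$ into \eqref{eq:nabla^Wdplusminus} for $\varphi_2$ and into \eqref{eq:Lich}: from $D_\theta^2\varphi_2 = \lambda\varphi_2$ and $|\nabla^W\varphi_2|^2 = |T_{m/2}\varphi_2|^2 + \frac{1}{m+2}|D_-\varphi_2|^2$, integrating \eqref{eq:Lich} gives $\lambda\|\varphi_2\|^2 = \|\nabla^W\varphi_2\|^2 + \frac14{\rm Scal}^W\|\varphi_2\|^2$; but also $\lambda = \frac{m+2}{4(m+1)}{\rm Scal}^W$ and $|D_-\varphi_2|^2 = |D_\theta\varphi_2|^2$ integrates against $D_\theta^2$ to $\lambda\|\varphi_2\|^2$, so $\|T_{m/2}\varphi_2\|^2 = \lambda\|\varphi_2\|^2 - \frac14{\rm Scal}^W\|\varphi_2\|^2 - \frac{\lambda}{m+2}\|\varphi_2\|^2 = 0$, hence $T_{m/2}\varphi_2 = 0$. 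The same computation applies to $\varphi_1$. The one subtlety to verify is that $\varphi_1$ and $\varphi_2$ are genuinely eigenspinors of $D_\theta^2$ (not merely that their sum is), which requires $[D_\theta^2, D_+] = [D_\theta^2, D_-] = 0$ — immediate from $D_\pm^2 = 0$ and the anticommutation $D_+D_- + D_-D_+ = D_\theta^2$, since then $D_\theta^2 D_\pm = (D_+D_- + D_-D_+)D_\pm = D_\mp D_\pm D_\pm = \cdots$ — this will be the step needing the most care.
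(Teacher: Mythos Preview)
Your proposal is essentially correct and, for the second part, matches the paper's argument almost verbatim: decompose $\varphi=\varphi_1+\varphi_2$ via $D_\theta^2=\lambda$, use $D_\pm^2=0$ to get $D_-\varphi_1=0$ and $D_+\varphi_2=0$, check that $\varphi_i$ are eigenspinors of $D_\theta^2$ for $\lambda$ (your commutation argument $[D_\theta^2,D_\pm]=0$ is fine; the paper writes it as $D_\theta^2 D_-D_+=D_-D_+D_\theta^2$), and then invoke the equality case of Theorem~\ref{thm:estimateeigenvalue} at $r=\frac m2$.

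For the first part you are overcomplicating things. The paper's route avoids all commutators and $\nabla^W_T$-bookkeeping: from the twistor equation $\nabla^W_X\varphi=-\frac{1}{m+2}X^+\cdot D_-\varphi-\frac{1}{m+2}X^-\cdot D_+\varphi$ one computes, in a parallel frame,
\[
\nabla^*\nabla\varphi=-\sum_l\nabla^W_{e_l}\nabla^W_{e_l}\varphi=\frac{1}{m+2}\big(D_+D_-\varphi+D_-D_+\varphi\big)=\frac{1}{m+2}D_\theta^2\varphi
\]
\emph{pointwise}. This is legitimate: it is an operator identity on $\varphi$, not an integrated one. Plugging into \eqref{eq:Lich} at $r=\frac m2$ (where the $\rho_\theta$-term vanishes and the coefficients of $\nabla_{10}^*\nabla_{10}$, $\nabla_{01}^*\nabla_{01}$ are both $1$, so the right-hand side is $\nabla^*\nabla\varphi+\tfrac14{\rm Scal}^W\varphi$) gives $\frac{m+1}{m+2}D_\theta^2\varphi=\frac14{\rm Scal}^W\varphi$ immediately. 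Your worry about $\nabla^W_T$ terms and the commutators \eqref{eq:bracketnabla^Wd+} is unnecessary for this identity; those only enter if you go through \eqref{eq:schrLich} rather than \eqref{eq:Lich}.
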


\begin{proof} Let $\varphi$ be a CR twistor spinor, then from \eqref{eq:twistordef}, we have that 
$$\nabla^W_X\varphi=-\frac{1}{m+2}X^+\cdot D_-\varphi-\frac{1}{m+2}X^-\cdot D_+\varphi$$
for all $X\in H(M)$. Hence, a direct computation shows that 
$$\nabla^*\nabla\varphi=\frac{1}{m+2}(D_+D_{-}\varphi+D_-D_{+}\varphi)=\frac{1}{m+2}D_\theta^2\varphi.$$
Therefore plugging this last identity in \eqref{eq:Lich} (for $r=\frac{m}{2}$) yields the identity $D_\theta^2\varphi=\frac{m+2}{4(m+1)}{\rm Scal}^W\varphi$ as stated. To prove the second part, we simply write
$$\varphi=\frac{1}{\lambda}D_\theta^2\varphi=\frac{1}{\lambda}D_-D_+\varphi+\frac{1}{\lambda}D_+D_-\varphi,$$
where we set $\lambda=\frac{m+2}{4(m+1)}{\rm Scal}^W\neq 0$. Hence, we get the decomposition of $\varphi$ into the antiholomorphic spinor $\varphi_1:=\frac{1}{\lambda}D_-D_+\varphi$ and the holomorphic spinor $\varphi_2:=\frac{1}{\lambda}D_+D_-\varphi$. Now, we can easily check that 
$$D_\theta^2\varphi_1=\frac{1}{\lambda}D_\theta^2D_-D_+\varphi=\frac{1}{\lambda}D_-D_+D_\theta^2\varphi=D_-D_+\varphi=\lambda\varphi_1.$$
The same computations can be done for $\varphi_2$. Hence, $\varphi_1$ (resp. $\varphi_2$) is an eigenspinor associated with the lowest eigenvalue estimate. Therefore by Theorem \ref{thm:estimateeigenvalue}, the spinor $\varphi_1$ (resp. $\varphi_2$) is a CR twistor spinor in $\Sigma_{\frac{m}{2}}(H(M))$ which is also antiholomorphic (resp. holomorphic). This finishes the proof.
\end{proof}

\section{CR twistor spinors in $\Sigma_r(H(M))$, when $r\neq\frac{m}{2}$}\label{sec:twisspigeneral}
In this section, we are going to characterize the equality case of the estimate in Theorem \ref{thm:estimateeigenvalue} for $r\neq \frac{m}{2}$. That means, we will study the existence of an antiholormorphic CR twistor spinor $\varphi$ satisfying $\rho_\theta\cdot\varphi=\frac{{\rm Scal}^W}{4m} d\theta\cdot\varphi$. We show the following
\begin{thm}\label{thm:equalitycasegeneral}
Let $(M^{2m+1},H(m),J)$ be a closed strictly pseudoconvex CR manifold of constant positive Webster scalar curvature ${\rm Scal}^W$. Assume there exists an antiholomorphic CR twistor spinor $\varphi$ of type $r$ with $0<r<m$ and $r\neq \frac{m}{2}$ such that $\rho_\theta\cdot\varphi=\frac{{\rm Scal}^W}{4m} d\theta\cdot\varphi$. Then the torsion vanishes and the manifold is strictly pseudo-Einstein. Moreover, the  spinor $\psi:=D_+\varphi$ is a holomorphic twistor spinor.
\end{thm}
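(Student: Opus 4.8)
The plan is to exploit the integrability conditions that an antiholomorphic CR twistor spinor $\varphi\in\Sigma_r(H(M))$ with $\rho_\theta\cdot\varphi=\frac{{\rm Scal}^W}{4m}d\theta\cdot\varphi$ must satisfy, and to extract from them algebraic constraints forcing $\tau=0$ and pseudo-Einstein. First I would record the consequences already available: since $D_-\varphi=0$ and $\varphi$ is a twistor spinor, \eqref{eq:twistordef} reads $\nabla^W_X\varphi=-\frac{1}{2(r+1)}X^-\cdot D_+\varphi$ for all $X\in H(M)$, and from Proposition \ref{pro:middledimensioncha}'s analogue (here the general Lichnerowicz formula \eqref{eq:Lich}) together with Theorem \ref{thm:estimateeigenvalue} we know $D_\theta^2\varphi=\lambda\varphi$ with $\lambda=\frac{r(m-r)a_r}{m^2}{\rm Scal}^W$ and ${\rm Scal}^W$ constant. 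Set $\psi:=D_+\varphi\in\Sigma_{r+1}(H(M))$; then $D_+\psi=D_+^2\varphi=0$, so $\psi$ is automatically holomorphic, and $D_-\psi=D_-D_+\varphi=D_\theta^2\varphi=\lambda\varphi$ (using $D_+D_-\varphi=0$). The claim that $\psi$ is itself a \emph{twistor} spinor will then follow once we show $\nabla^W_X\psi=-\frac{1}{2(m-(r+1)+1)}X^+\cdot D_-\psi=-\frac{\lambda}{2(m-r)}X^+\cdot\varphi$; this is a computation differentiating $\psi=D_+\varphi$ and commuting $\nabla^W_X$ past $D_+$ via \eqref{eq:bracketx+nabla^Wd+} and \eqref{eq:bracketnabla^Wd+}, substituting the twistor equation for $\varphi$ — but it only closes up cleanly once the torsion has been shown to vanish, so the logical order is: torsion first, then pseudo-Einstein, then the twistor property of $\psi$.

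For the torsion, the key step is to apply the commutator identity \eqref{eq:bracketnabla^Wd2} to $\varphi$. Since $D_\theta^2\varphi=\lambda\varphi$ with $\lambda$ constant, the left side $[\nabla^W_X,D_\theta^2]\varphi=\nabla^W_X(\lambda\varphi)-D_\theta^2(\nabla^W_X\varphi)=\lambda\nabla^W_X\varphi-D_\theta^2(\nabla^W_X\varphi)$. Now $\nabla^W_X\varphi=-\frac{1}{2(r+1)}X^-\cdot\psi$, and $D_\theta^2$ acting on a spinor of the form $Y\cdot(\text{holomorphic eigenspinor})$ can be controlled using \eqref{eq:com} and the fact that $\psi$ is a $D_\theta^2$-eigenspinor (one must check the eigenvalue of $\psi$: from $D_-D_+\psi=0$ and $D_+D_-\psi=D_+(\lambda\varphi)=\lambda\psi$, indeed $D_\theta^2\psi=\lambda\psi$). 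On the right side of \eqref{eq:bracketnabla^Wd2}, the terms $\frac14 X({\rm Scal}^W)$ drop since ${\rm Scal}^W$ is constant; the terms $\frac12\nabla^W_{JX}\rho_\theta\cdot\varphi$ can be handled because $\rho_\theta\cdot\varphi$ is pinned to $\frac{{\rm Scal}^W}{4m}d\theta\cdot\varphi$, so $\nabla^W_{JX}(\rho_\theta\cdot\varphi)=\frac{{\rm Scal}^W}{4m}d\theta\cdot\nabla^W_{JX}\varphi$, which combined with $(\nabla^W_{JX}\rho_\theta)\cdot\varphi=\nabla^W_{JX}(\rho_\theta\cdot\varphi)-\rho_\theta\cdot\nabla^W_{JX}\varphi$ gives $(\nabla^W_{JX}\rho_\theta)\cdot\varphi=(\frac{{\rm Scal}^W}{4m}d\theta-\rho_\theta)\cdot\nabla^W_{JX}\varphi$. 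The upshot is an identity in which every term is expressed through Clifford actions on $\psi$ and $\varphi$ and through the torsion tensors $\tau$, ${\rm Ric}^W(T)$, $\rho_\theta$, $R^W(T,\cdot)$; separating the $\Sigma_{r}$- and $\Sigma_{r+2}$-components (using that $X^+$ raises and $X^-$ lowers the grading, and that $\tau(X^\pm)$ shifts by $\mp 1$, $\rho_\theta$ preserves grading), one obtains two algebraic equations. The $\Sigma_{r+2}$-part, which contains the genuinely torsion-carrying pieces $\tau(X)\cdot d\theta\cdot$, $d\theta\cdot R^W(T,X)$, etc., should force ${\rm Ric}^W(T)=0$ and then $\tau=0$ after a further contraction or by invoking \eqref{eq:rict} and the structure of $S$; once $\tau=0$, \eqref{eq:riccrho} shows ${\rm Ric}^W$ is $J$-invariant and $\rho_\theta\cdot\varphi=\frac{{\rm Scal}^W}{4m}d\theta\cdot\varphi$ together with the fact that $i\rho_\theta\cdot$ acts on $\Sigma_r$ with eigenvalues $\sum\pm(\text{Ricci eigenvalues})$ (the Kähler-type decomposition) forces $\rho_\theta=\frac{{\rm Scal}^W}{4m}d\theta$, i.e. strictly pseudo-Einstein.

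The main obstacle I anticipate is the bookkeeping in the second step: \eqref{eq:bracketnabla^Wd2} has eleven terms on the right, several involving the full curvature operator $R^W(X,e_l)$ and $R^W(T,\cdot)$, and one must reduce all of these — using the Ricci identities \eqref{eq:riccifor}, Proposition \ref{Riccitdirection}, \eqref{eq:ddeltatau}, the twistor equation for $\varphi$, and the pinning of $\rho_\theta\cdot\varphi$ — to pure Clifford-algebraic expressions in $\varphi$, $\psi$, before the grading decomposition can be read off. The delicate point is ensuring that the $\Sigma_{r+2}$-component really does isolate the torsion and is not identically satisfied; this is where the hypothesis $r\neq\frac{m}{2}$ and $0<r<m$ presumably enters, guaranteeing that the relevant projections $\Sigma_{r}\to\Sigma_{r\pm 1}\to\Sigma_{r\pm 2}$ are nonzero maps and that the coefficients appearing (like $r+1$, $m-r$, $m-r+1$) do not degenerate. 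After the torsion vanishes, the remaining claims are comparatively short: pseudo-Einstein from the eigenvalue structure of $\rho_\theta\cdot$ on $\Sigma_r$, and the twistor property of $\psi=D_+\varphi$ by re-running the differentiation of $\psi$ now that all torsion terms in \eqref{eq:bracketnabla^Wd+}–\eqref{eq:bracketx+nabla^Wd+} are gone, together with $D_-\psi=\lambda\varphi$ and $D_+\psi=0$.
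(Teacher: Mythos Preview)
Your overall architecture --- exploit $[\nabla^W_{X},D_\theta^2]\varphi$ with $D_\theta^2\varphi=\lambda\varphi$, feed in the twistor equation and the pinning $\rho_\theta\cdot\varphi=\frac{{\rm Scal}^W}{4m}d\theta\cdot\varphi$, and read off constraints --- is indeed the one the paper follows. But two of your steps are genuine gaps, and they are precisely the places where the argument is hardest.

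\textbf{Torsion vanishing is not pointwise.} You write that after grading-decomposition ``the $\Sigma_{r+2}$-part \ldots\ should force ${\rm Ric}^W(T)=0$ and then $\tau=0$ after a further contraction''. This does not work: the commutator identity alone gives you relations like $\tau(X^+)\cdot\varphi=0$ and $\tau(X^+)\cdot D_+\varphi=-{\rm Ric}^W(T,X^+)\varphi$, but these do \emph{not} force $\tau=0$ pointwise --- $\tau(X^+)\cdot\varphi=0$ is a single Clifford annihilation condition and $\tau$ has many components. What the paper actually does is differentiate such relations once more and contract to produce two \emph{scalar} identities on $M$, each of the form
\[
(\text{nonnegative quantity in } |\tau|^2 \text{ or } |\rho_\theta-\tfrac{{\rm Scal}^W}{4m}d\theta|^2)\ +\ (\text{terms in } ({\rm Ric}^W T)^\pm\cdot D_+\varphi)\ =\ (\text{divergence of a vector field}).
\]
Only after eliminating the cross-terms by a linear combination and then \emph{integrating over the closed manifold} does the right-hand side vanish and give $|\tau|^2=0$ and $|\rho_\theta-\frac{{\rm Scal}^W}{4m}d\theta|^2=0$. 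Compactness is essential here (the paper even remarks that without it one must separately assume ${\rm Ric}^W(T)=0$), and your proposal never invokes it.

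\textbf{The pseudo-Einstein step is wrong as stated.} You claim that once $\tau=0$, the condition $\rho_\theta\cdot\varphi=\frac{{\rm Scal}^W}{4m}d\theta\cdot\varphi$ together with the eigenvalue description of $i\rho_\theta\cdot$ on $\Sigma_r$ ``forces $\rho_\theta=\frac{{\rm Scal}^W}{4m}d\theta$''. It does not. A single spinor being an eigenvector of $i\rho_\theta\cdot$ constrains only one particular signed sum of the Ricci eigenvalues, not each eigenvalue separately; indeed the paper's own Theorem~\ref{eq:equalitycasemiddle} exhibits (in the middle slot) situations with $\rho_\theta\cdot\varphi=ic\varphi$ and \emph{two distinct} Ricci eigenvalues. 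In the paper, pseudo-Einstein is obtained simultaneously with $\tau=0$ from the same integrated identity, via the pointwise inequality $|\rho_\theta-\frac{{\rm Scal}^W}{4m}d\theta|^2\ge 0$.

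Your final paragraph on $\psi=D_+\varphi$ being a holomorphic twistor spinor is fine once $\tau=0$ and pseudo-Einstein are in hand; that part matches the paper's last step.
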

\begin{proof} From \eqref{eq:twistordef}, a direct computation shows that 
$\nabla^*\nabla\varphi=\frac{1}{2(r+1)}D^2_\theta\varphi,$
where we have used that $D^2_\theta\varphi=D_-D_+\varphi$. Now, we take an orthonormal frame $\{e_l\}_{l=1,\ldots,2m}$ of $H(M)$ parallel at some point to compute
\begin{equation}\label{eq:nabla^Wnabla^Wj}
\sum_{l=1}^{2m}\nabla^W_{e_l}\nabla^W_{Je_l}\varphi=-\frac{1}{2(r+1)}\sum_{l=1}^{2m}(Je_l)^-\cdot \nabla^W_{e_l}D_{+}\varphi
=\frac{i}{2(r+1)}D^2_\theta\varphi.
\end{equation}
Therefore, inserting \eqref{eq:nabla^Wnabla^Wj} into Equation \eqref{eq:schrLich} and using that $\rho_\theta\cdot\varphi=\frac{{\rm Scal}^W}{4m} d\theta\cdot\varphi$,  allow to get 
\begin{equation}\label{eq:d2theravarphihol}
D_\theta^2\varphi=\frac{(m-r)(r+1){\rm Scal}^W}{(m+1)m}\varphi.
\end{equation}
By Equation \eqref{eq:nabla^Wt}, we have that 
\begin{equation}\label{eq:hol5'}
\nabla^W_T\varphi=\frac{i(m-2r-1){\rm Scal}^W}{8m(m+1)}\varphi.
\end{equation}
Now, we use Equations \eqref{eq:bracketnabla^Wd+} and \eqref{eq:bracketx+nabla^Wd+} to get from $D_-\varphi=0$, with the help of \eqref{eq:hol5'} the following two equations 
\begin{equation}\label{eq:taurhotheta}
\tau(X^+)\cdot\varphi=0,\,\,\, i\rho_\theta(X^-)\cdot\varphi=\frac{{\rm Scal}^W}{2m}X^-\cdot\varphi.
\end{equation}
In the last equation, we use \eqref{eq:com}. Also Equation \eqref{eq:bracketnabla^WT} gives that 
\begin{equation}\label{eq:rictvarphihol}
({\rm Ric}^W(T))^-\cdot\varphi=0.
\end{equation}
Now, from \eqref{eq:d2theravarphihol}, on one hand we deduce that $[\nabla^W_{X^+}, D^2_\theta]\varphi=0$, on the other hand, we obtain from  \eqref{eq:bracketnabla^Wd2} the following equation
\begin{eqnarray}\label{eq:identitytauric}
2i {\rm Ric}^W(T,X^+)\varphi+\frac{i}{2}\nabla^W_{X^+}\rho_\theta\cdot\varphi+4i(r-m)R^W(T,X^+)\varphi\nonumber\\+\frac{2i(r-m+1)}{r+1}\tau(X^+)\cdot D_+\varphi+\frac{1}{r+1}\sum_{l=1}^{2m}R^W(X^+,e_l)(e_l^-\cdot D_+\varphi)=0.
\end{eqnarray}
Here, we use \eqref{eq:rictvarphihol} and \eqref{eq:hol5'}. An easy computation allows to show that 
$R^W(T,X^+)\varphi=-\frac{1}{2(r+1)}\tau(X^+)\cdot D_+\varphi$ and that  
\begin{eqnarray*}
\sum_{l=1}^{2m} R^W(X^+,e_l)(e_l^-\cdot D_+\varphi)&=&({\rm Ric}^W(X^+))^-\cdot D_+\varphi+\sum_{l=1}^{2m}e_l^-\cdot R^W(X^+,e_l)D_+\varphi\\&=&
2ir\tau(X^+)\cdot D_+\varphi.
\end{eqnarray*}
where we have used \eqref{eq:riccrho},\eqref{eq:riccifor} to get the last equality. Hence, Identity \eqref{eq:identitytauric} becomes 
\begin{equation}\label{eq:tauric}
\tau(X^+)\cdot D_+\varphi=- {\rm Ric}^W(T,X^+)\varphi
\end{equation}
after taking into account that $\nabla^W_{X^+}\rho_\theta\cdot\varphi=0$, which comes from differentiating the equation $\rho_\theta\cdot\varphi=\frac{{\rm Scal}^W}{4m} d\theta\cdot\varphi$ in the $X^+$-direction. Differentiating  Equation \eqref{eq:tauric} and using the fact that $\nabla^W_{X^-}D_+\varphi=2i(r+1)\tau(X^-)\cdot\varphi$ which can be proven from the second equation in \eqref{eq:bracketx+nabla^Wd+} as well as \eqref{eq:com}, we get with the help of \eqref{eq:taurhotheta}
\begin{equation}\label{eq:rict-tau}
\frac{i(2r+1)}{r+1}({\rm Ric}^W(T))^-\cdot D_+\varphi+4(r+1)|\tau|^2\varphi=2i\delta^W(({\rm Ric}^W(T))^-)\varphi.
\end{equation}
Here $|\tau|^2:=\sum_{l=1}^{2m}g_\theta(\tau(e_l),\tau(e_l))$ is the operator norm of $\tau$. This equation will be useful later to deduce the result. Now, we will do the same technique as before by considering the commutator $[\nabla^W_{X^-}, D^2_\theta]\varphi$. First, we have 
\begin{eqnarray}\label{eq:comutatorx-I}
[\nabla^W_{X^-}, D^2_\theta]\varphi&\bui{=}{\eqref{eq:d2theravarphihol}}&-\frac{(m-r){\rm Scal}^W}{2(m+1)m}X^-\cdot D_+\varphi+\frac{1}{2(r+1)}D^2_\theta(X^-\cdot D_+\varphi)\nonumber\\
&\bui{=}{\eqref{eq:com}}&\frac{1}{2(r+1)}\nabla^*\nabla X^-\cdot D_+\varphi+\frac{2i}{r+1}X^-\cdot \nabla^W_T D_+\varphi\nonumber\\
&\bui{=}{\eqref{eq:bracketnabla^WT},\eqref{eq:hol5'}}&\frac{1}{2(r+1)}\nabla^*\nabla X^-\cdot D_+\varphi-\frac{(m-2r-1){\rm Scal}^W}{4m(m+1)(r+1)}X^-\cdot D_+\varphi\nonumber\\&&-\frac{i}{r+1}X^-\cdot({\rm Ric}^W (T))^+\cdot\varphi.
\end{eqnarray}
In the last equation, we have also used \eqref{eq:rictvarphihol}. On the other hand, we use \eqref{eq:d2theravarphihol} for the commutator to get that
\begin{eqnarray}\label{eq:comutatorx-II} 
[\nabla^W_{X^-},D_\theta^2]\varphi&=&-iX^-\cdot ({\rm Ric}^W (T))^+\cdot\varphi+i({\rm Ric}^W (T))^+\cdot X^-\cdot\varphi\nonumber-\frac{i}{2}\nabla^W_{X^-}\rho_\theta\cdot\varphi\nonumber\\&&-\frac{i}{2(r+1)}\rho_\theta(X^-)\cdot D_+\varphi+4ir R^W(T,X^-)\varphi\nonumber\\&&+\frac{1}{2(r+1)}\nabla^*\nabla X^-\cdot D_+\varphi+\frac{1}{r+1}\sum_{l=1}^{2m}R^W(X^-,e_l)(e_l^-\cdot D_+\varphi)\nonumber\\&&-\frac{(m-2r-1){\rm Scal}^W}{4m(m+1)(r+1)}X^-\cdot D_+\varphi.
\end{eqnarray}
An easy computation gives that 
$R^W(T,X^-)\varphi=\frac{1}{4(r+1)}X^-\cdot ({\rm Ric}^W (T))^+\cdot\varphi$ and that
\begin{eqnarray}\label{eq:rwx-d-}
\sum_{l=1}^{2m} R^W(X^-,e_l)(e_l^-\cdot D_+\varphi)&=&({\rm Ric}^W(X^-))^-\cdot D_+\varphi+\sum_{l=1}^{2m}e_l^-\cdot R^W(X^-,e_l)D_+\varphi\nonumber\\
&=&\frac{i}{2}\rho_\theta(X^-)\cdot D_+\varphi,
\end{eqnarray}
where we have used \eqref{eq:riccrho} and \eqref{eq:riccifor}. Hence comparing Equations \eqref{eq:comutatorx-I}  and \eqref{eq:comutatorx-II} yields, after plugging the curvature terms, the following
$$\nabla^W_{X^-}\rho_\theta\cdot\varphi=2({\rm Ric}^W (T))^+\cdot X^-\cdot\varphi.$$
Differentiating $\rho_\theta\cdot\varphi=\frac{{\rm Scal}^W}{4m} d\theta\cdot\varphi$ in the $X^-$-direction gives after comparing with the above equation that 
\begin{equation}\label{eq:rhothetad+}
-\frac{i}{4(r+1)}\rho_\theta\cdot X^-\cdot D_+\varphi=\frac{(2r-m){\rm Scal}^W}{8m(r+1)}X^-\cdot D_+\varphi-i ({\rm Ric}^W (T))^+\cdot X^-\cdot \varphi.
\end{equation}
Contracting \eqref{eq:rhothetad+} yields, after using $\sum_{l=1}^{2m}e_l\cdot\rho_\theta\cdot e_l^-=-2(r-1)\rho_\theta+i{\rm Scal}^W$ and $\sum_{l=1}^{2m}e_l^+\cdot e_l^-=-2r$ on $\Sigma_r(H(M))$ which can be proven by a straightforward computation, the following  
$$\frac{i}{2(r+1)}\rho_\theta\cdot D_+\varphi=-\frac{(2r+2-m){\rm Scal}^W}{4m(r+1)}D_+\varphi-2i ({\rm Ric}^W (T))^+\varphi.$$
Replacing this last equation again into \eqref{eq:rhothetad+} gives the following 
\begin{equation*}
-\frac{i}{2(r+1)}\rho_\theta(X^-)\cdot D_+\varphi=-\frac{{\rm Scal}^W}{4m(r+1)}X^-\cdot D_+\varphi+2i{\rm Ric}^W(T,X^-)\varphi.
\end{equation*}
Differentiating this last equation and contracting gives with the help of \eqref{eq:diverricciform} and \eqref{eq:bracketnabla^Wd+} after some lenghty computation 
\begin{eqnarray}\label{eq:rict-rhotheta}
\frac{i(m-1)}{r+1}({\rm Ric}^W(T))^-\cdot D_+\varphi-\frac{1}{4(r+1)}\left(|\rho_\theta|^2-\frac{({\rm Scal}^W)^2}{2m}\right)\varphi=\nonumber\\-2i\delta^W(({\rm Ric}^W (T))^+)\varphi.
\end{eqnarray}
Here, we also use the second equation in \eqref{eq:taurhotheta} and the fact that $\sum_{l=1}^{2m}\rho_\theta(e_l^-)\cdot e_l=-\rho_\theta+\frac{i}{2}{\rm Scal}^W$, which can be easily shown. The norm of $\rho_\theta$ in \eqref{eq:rict-rhotheta} is being defined as $|\rho_\theta|^2:=\sum_{l=1}^{2m} |\rho_\theta(e_l)|^2$.
Finally, comparing \eqref{eq:rict-tau} to \eqref{eq:rict-rhotheta} yields the following 
\begin{eqnarray}\label{eq:finalequation}
4(r+1)(m-1)|\tau|^2+\frac{2r+1}{4(r+1)}\left(|\rho_\theta|^2-\frac{({\rm Scal}^W)^2}{2m}\right)=\nonumber\\2i(m-1)\delta^W(({\rm Ric}^W (T))^-)+2i(2r+1)\delta^W(({\rm Ric}^W (T))^+).
\end{eqnarray}
Since we have pointwise that $0\leq|\rho_\theta-\frac{{\rm Scal}^W}{4m}d\theta|^2=|\rho_\theta|^2-\frac{({\rm Scal}^W)^2}{2m}$, which can be proven using \eqref{eq:riccrho}, the integral over $M$ of \eqref{eq:finalequation} yields the first statement of the theorem. Notice here that the divergence of a vector with respect to the Tanaka-Webster connection is equal to the divergence with respect to the Levi-Civita connection of the metric $g^W=d\theta^2\oplus g_\theta$. To prove the second part of the theorem, we clearly have that the spinor $D_+\varphi$ is holomorphic. Now, we use Equation \eqref{eq:bracketnabla^WD} and \eqref{eq:com} along with the fact that the manifold is pseudo-Einstein with vanishing torsion to get that 
$$\nabla^W_X(D_+\varphi)=\frac{1}{r+1}\nabla^W_{X^-}(D_+\varphi)-\frac{(r+1){\rm Scal}^W}{2m(m+1)}X^+\cdot \varphi\ .$$
Hence, we deduce that $\nabla^W_{X^+}(D_+\varphi)=-\frac{1}{2(m-r)}X^+\cdot D_-D_+\varphi$, where we use Equation \eqref{eq:d2theravarphihol} and $\nabla^W_{X^-}(D_+\varphi)=0$. Therefore,  $D_+\varphi$ is a CR twistor spinor. 
\end{proof}
\begin{rk} {\it The compactness of the manifold $M$ in the previous theorem can be dropped by just assuming ${\rm Ric}^W(T)=0$ meaning that the torsion $\tau$ is free divergence.}
\end{rk}
\section{CR twistor spinors in $\Sigma_{\frac{m}{2}}(H(M))$} \label{sec:twisspimiddle}
In this section, we study the CR twistor equation in $\Sigma_r(H(M))$ when $r=m/2$, for $m$ even. Recall that the Yamabe problem on closed CR manifolds states that one can always find an adapted pseudo-Hermitian structure $\tilde\theta=e^{2f}\theta$ such that the conformal Webster scalar curvature $\widetilde{\rm Scal}^W$ is constant \cite{JL}. Also, it is shown in \cite{L:18} that the CR twistor equation in the middle dimension is invariant under any adapted pseudo-Hermitian structure. Thus, we will be interested in the sequel at studying the twistor equation on CR manifolds with constant Webster scalar curvature ${\rm Scal}^W$. Also, it is pointed out in Proposition \ref{pro:middledimensioncha} that the existence of such a spinor $\varphi$ gives that $D_\theta^2\varphi=\frac{m+2}{4(m+1)}{\rm Scal}^W\varphi$ which after taking the Hermitian product with $\varphi$ and integrating over $M$  (the manifold $M$ is still assumed to be closed) implies that the Webster scalar curvature is nonnegative. When this latter is equal to zero, the CR twistor equation reduces to the parallelness of $\varphi$ and, when it is constant positive, the twistor spinor $\varphi$ can be always chosen to be antiholomorphic (resp. holomorphic) by Proposition \ref{pro:middledimensioncha}. Therefore, we will be interested at considering CR manifolds of constant Webster scalar curvature carrying an antiholomorphic CR twistor spinor.
\begin{prop} \label{thm: antoholomiddle} Let $(M^{2m+1},H(M),J)$ be a strictly pseudoconvex CR manifold  with $m$ even. Assume that the torsion vanishes and the Webster scalar curvature ${\rm Scal}^W$ is constant. Then, the Ricci form $\rho_\theta$ preserves the space of antiholomorphic CR twistor spinors in $\Sigma_{\frac{m}{2}}(H(M))$.
\end{prop}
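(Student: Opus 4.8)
The plan is to show that whenever $\varphi\in\Sigma_{\frac{m}{2}}(H(M))$ is an antiholomorphic CR twistor spinor, the spinor $\rho_\theta\cdot\varphi$ is again one. Since $\rho_\theta\cdot$ preserves each $\Sigma_r(H(M))$ (the Remark following \eqref{eq:Lich}), only two points have to be checked: $D_-(\rho_\theta\cdot\varphi)=0$ and $T_{\frac{m}{2}}(\rho_\theta\cdot\varphi)=0$. The preparatory observation is that, because the torsion vanishes, \eqref{eq:rict} gives ${\rm Ric}^W(T)=0$; hence Lemma \ref{lem:dwrtheta} yields $d^W\rho_\theta=0$, and \eqref{eq:diverricciform} together with the constancy of ${\rm Scal}^W$ yields $\delta^W\rho_\theta=0$. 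Feeding this into the commutator formula \eqref{eq:commomegad+} collapses it to
$$[D_\theta,\rho_\theta]=-2\sum_{l=1}^{2m}(e_l\lrcorner\rho_\theta)\cdot\nabla^W_{e_l}.$$

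For the antiholomorphy I apply this to $\varphi$ and substitute the twistor equation \eqref{eq:twistordef}, which for $r=\frac{m}{2}$ and $D_-\varphi=0$ reads $\nabla^W_{e_l}\varphi=-\frac{1}{m+2}e_l^-\cdot D_+\varphi$; this gives $D_\theta(\rho_\theta\cdot\varphi)=\rho_\theta\cdot D_+\varphi+\frac{2}{m+2}\sum_{l}(e_l\lrcorner\rho_\theta)\cdot e_l^-\cdot D_+\varphi$. Projecting onto $\Sigma_{\frac{m}{2}-1}(H(M))$ — where $\rho_\theta\cdot D_+\varphi\in\Sigma_{\frac{m}{2}+1}(H(M))$ contributes nothing — leaves $D_-(\rho_\theta\cdot\varphi)=\frac{2}{m+2}\sum_{k,l}\rho_\theta(e_l,e_k)\,e_k^-\cdot e_l^-\cdot D_+\varphi$. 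The key algebraic fact, proved by expanding $e_l^-=\frac{1}{2}(e_l+iJe_l)$ and using that $\rho_\theta$ is of type $(1,1)$ (i.e. $\rho_\theta(J\cdot,J\cdot)=\rho_\theta(\cdot,\cdot)$) together with the symmetry of ${\rm Ric}^W(X,Y)=\rho_\theta(X,JY)$ (via \eqref{eq:riccrho} with $\tau=0$), is that $\sum_{k,l}\rho_\theta(e_l,e_k)\,e_k^-\cdot e_l^-\cdot=0$ as a Clifford endomorphism: the two "untwisted" terms $\sum\rho_\theta(e_l,e_k)e_k\cdot e_l\cdot$ and $\sum\rho_\theta(e_l,e_k)Je_k\cdot Je_l\cdot$ are both $-2\rho_\theta\cdot$ and so cancel, while the two mixed terms cancel against one another (their bivector parts vanishing by the symmetry of ${\rm Ric}^W$). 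Hence $D_-(\rho_\theta\cdot\varphi)=0$.

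The twistor property is the heart of the proof. Setting $\psi:=\rho_\theta\cdot\varphi$, the $\Sigma_{\frac{m}{2}+1}$-part of the identity above reads $D_+\psi=\rho_\theta\cdot D_+\varphi+\frac{2}{m+2}\sum_{k,l}\rho_\theta(e_l,e_k)e_k^+\cdot e_l^-\cdot D_+\varphi$, and one must verify $\nabla^W_X\psi=-\frac{1}{m+2}X^-\cdot D_+\psi$ for all $X\in H(M)$. Expanding $\nabla^W_X\psi=(\nabla^W_X\rho_\theta)\cdot\varphi+\rho_\theta\cdot\nabla^W_X\varphi$ and using the twistor equation for $\varphi$, this reduces to a pair of identities expressing $(\nabla^W_{X^{\pm}}\rho_\theta)\cdot\varphi$ through Clifford terms built from $D_+\varphi$; in particular the $X^+$-component amounts to $(\nabla^W_{X^+}\rho_\theta)\cdot\varphi=0$, since $\nabla^W_{X^+}\varphi=0$ and $(X^+)^-=0$. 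To establish these I would differentiate the $(1,1)$-identity and the relation ${\rm Ric}^W(X,Y)=\rho_\theta(X,JY)$, then use $d^W\rho_\theta=0$, $\delta^W\rho_\theta=0$, the Ricci identities \eqref{eq:riccifor} and Proposition \ref{Riccitdirection}, and once more the $(1,1)$-type Clifford vanishing of the previous step, so that all curvature and $\nabla^W{\rm Ric}^W$ contributions drop out. If one additionally assumes $M$ closed there is a shortcut: Proposition \ref{pro:middledimensioncha} gives $D_\theta^2\varphi=\lambda_0\varphi$ with $\lambda_0=\frac{(m+2){\rm Scal}^W}{4(m+1)}$, one checks from the commutator identities that $D_\theta^2\psi=\lambda_0\psi$, and then the equality case of Theorem \ref{thm:estimateeigenvalue} for $r=\frac{m}{2}$ forces $T_{\frac{m}{2}}\psi=0$; the degenerate case ${\rm Scal}^W=0$ is treated separately, where $\varphi$ is parallel and contracting $R^W(X,Y)\varphi=0$ through \eqref{eq:riccifor} already yields $\rho_\theta\cdot\varphi=0$.

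The step I expect to be the main obstacle is precisely this last one: $\rho_\theta$ is not $\nabla^W$-parallel, so the whole argument turns on extracting from its harmonicity $d^W\rho_\theta=\delta^W\rho_\theta=0$ — equivalently, from the vanishing of the divergence of the torsion — exactly the amount of information about $\nabla^W\rho_\theta$ needed for the $(1,1)$-type Clifford cancellations to close up, and on organizing the bookkeeping so that the terms carrying $\nabla^W{\rm Ric}^W$ disappear.
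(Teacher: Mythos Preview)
Your antiholomorphy argument is correct and is a legitimate alternative to the paper's. The paper gets $D_-(\rho_\theta\cdot\varphi)=0$ more cheaply: since $\tau=0$ and ${\rm Ric}^W(T)=0$, \eqref{eq:bracketnabla^WT} gives $[\nabla^W_T,D_-]=0$; applying $D_-$ to the formula \eqref{eq:nablatmiddle}, namely $\nabla^W_T\varphi=-\frac{1}{4m}\rho_\theta\cdot\varphi-\frac{i{\rm Scal}^W}{8m(m+1)}\varphi$, and using $D_-\varphi=0$ then yields $D_-(\rho_\theta\cdot\varphi)=0$ immediately. Your route via the commutator \eqref{eq:commomegad+} and the $(1,1)$-type Clifford vanishing also works.

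The gap is exactly where you flag it: the twistor property of $\psi=\rho_\theta\cdot\varphi$. Your direct plan (``I would differentiate the $(1,1)$-identity \ldots'') is only a sketch, and more importantly it aims at the wrong source of information. Harmonicity $d^W\rho_\theta=\delta^W\rho_\theta=0$ controls only the antisymmetrization and trace of $\nabla^W\rho_\theta$, not the full tensor; there is no reason the remaining $\nabla^W{\rm Ric}^W$ terms should cancel from that alone. Your closed-manifold shortcut via Theorem~\ref{thm:estimateeigenvalue} is not available here since the proposition does not assume $M$ closed (and even on closed $M$ you would still owe the verification that $D_\theta^2\psi=\lambda_0\psi$).

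The missing idea, and what the paper actually does, is to extract $(\nabla^W_X\rho_\theta)\cdot\varphi$ from the \emph{spinorial curvature in the $T$-direction}. Since $\tau=0$, equation \eqref{eq:curvatureT} gives $R^W(T,X)=0$ on tensors, hence $R^W(T,X)\varphi=0$ on spinors. On the other hand, computing $R^W(T,X^\pm)\varphi$ directly as $\nabla^W_T\nabla^W_{X^\pm}\varphi-\nabla^W_{X^\pm}\nabla^W_T\varphi-\nabla^W_{[T,X^\pm]}\varphi$ and plugging in \eqref{eq:nablatmiddle} makes $(\nabla^W_{X^\pm}\rho_\theta)\cdot\varphi$ appear explicitly. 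For $X^+$ this already gives $(\nabla^W_{X^+}\rho_\theta)\cdot\varphi=0$ (using $\nabla^W_{X^+}\varphi=0$). For $X^-$ the resulting identity (equation \eqref{eq:rtx-middle} in the paper) is compared with a second computation of $[\nabla^W_{X^-},D_\theta^2]\varphi$ via \eqref{eq:bracketnabla^Wd2} (equation \eqref{eq:nablawx-rhotheta}), and the comparison forces $(\nabla^W_{X^-}\rho_\theta)\cdot\varphi=0$. Once $(\nabla^W_X\rho_\theta)\cdot\varphi=0$ is in hand, the twistor equation for $\psi$ follows by a short direct calculation. So the ingredient you are missing is not a sharper use of harmonicity, but the identity $R^W(T,X)\varphi=0$ as the device that pins down $(\nabla^W_X\rho_\theta)\cdot\varphi$.
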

\begin{proof} Let $\varphi$ be an antiholomorphic CR twistor spinor in $\Sigma_{\frac{m}{2}}(H(M))$. Recall that $D_\theta^2\varphi=\frac{m+2}{4(m+1)}{\rm Scal}^W\varphi$ as shown in Proposition \ref{pro:middledimensioncha} and Equation \eqref{eq:nabla^Wt} gives that 
\begin{equation}\label{eq:nablatmiddle}
\nabla^W_T\varphi=-\frac{1}{4m}\rho_\theta\cdot\varphi-\frac{i{\rm Scal}^W}{8m(m+1)}\varphi.
\end{equation}
Equation \eqref{eq:bracketx+nabla^Wd+} allows to get from the fact $D_-\varphi=0$, the following equation  
\begin{equation}\label{eq:scalwrhovarphi}
\frac{{\rm Scal}^W}{4m}X^-\cdot\varphi-\frac{i}{2}\rho_\theta(X^-)\cdot\varphi-\frac{i}{2m}X^-\cdot\rho_\theta\cdot\varphi=0.
\end{equation}
As the torsion vanishes, Equation \eqref{eq:bracketnabla^WT} gives with the help of \eqref{eq:nablatmiddle} that $D_-(\rho_\theta\cdot\varphi)=0$, which means that $\rho_\theta\cdot\varphi$ is antiholomorphic. Now it remains to show that $\rho_\theta\cdot\varphi$ is a CR twistor spinor. For this, we will prove that $\nabla^W_{X}\rho_\theta\cdot\varphi=0$. Since the torsion vanishes, the curvature $R^W(T,X^+)\varphi$ vanishes as well. Hence, using \eqref{eq:nablatmiddle} we deduce that $\nabla^W_{X^+}\rho_\theta\cdot\varphi=0$. Now, to show that $\nabla^W_{X^-}\rho_\theta\cdot\varphi=0$, we need to perform several computations. From one hand, the commutator $[\nabla^W_{X^-},D_\theta^2]$ is equal to 
\begin{eqnarray}\label{eq:nablax-middle}
[\nabla^W_{X^-},D_\theta^2]&=&\nabla^W_{X^-}D_\theta^2\varphi-D_\theta^2(\nabla^W_{X^-}\varphi)\nonumber\\
&\bui{=}{\eqref{eq:com}}&\frac{1}{m+2}\nabla^*\nabla X^-\cdot D_+\varphi+\frac{4i}{m+2}D_+(\nabla^W_T\varphi)\nonumber\\
&\bui{=}{\eqref{eq:nablatmiddle}}&\frac{1}{m+2}\nabla^*\nabla X^-\cdot D_+\varphi-\frac{i}{(m+2)^2}X^-\cdot\rho_\theta\cdot D_+\varphi\nonumber\\&&-\frac{{\rm Scal}^W}{2(m+1)(m+2)^2}X^-\cdot D_+\varphi.
\end{eqnarray}
In this computation, we have used the fact that
\begin{equation} \label{eq:d+rhothetavarphi}
D_+(\rho_\theta\cdot\varphi)=\frac{m}{m+2}\rho_\theta\cdot D_+\varphi-\frac{i{\rm Scal}^W}{m+2}D_+\varphi
\end{equation}
as a consequence of \eqref{eq:commomegad+}, since $\rho_\theta$ is closed and coclosed in view of \eqref{eq:diverricciform} and \eqref{eq:dwrhotheta}. From the other hand, we use Equation \eqref{eq:bracketnabla^Wd2} for the computation of the commutator to get after using \eqref{eq:rwx-d-} and comparing with \eqref{eq:nablax-middle}
\begin{eqnarray}\label{eq:nablawx-rhotheta}
-\frac{(m-2)i}{2m}\nabla^W_{X^-}\rho_\theta\cdot\varphi-\frac{2i}{m(m+2)^2}X^-\cdot\rho_\theta\cdot D_+\varphi-\frac{2i}{m(m+2)}\rho_\theta(X^-)\cdot D_+\varphi\nonumber\\+\frac{{\rm Scal}^W}{m(m+2)^2}X^-\cdot D_+\varphi=0.\nonumber\\
\end{eqnarray}
The curvature term $R^W(T,X^-)\varphi=0$, by the fact that the torsion vanishes. Therefore, by computing directly this term, we get after using \eqref{eq:nablatmiddle}, \eqref{eq:bracketnabla^WT} and \eqref{eq:d+rhothetavarphi}
\begin{eqnarray}\label{eq:rtx-middle}
R^W(T,X^-)\varphi&=&\nabla^W_T\nabla^W_{X^-}\varphi-\nabla^W_{X^-}\nabla^W_T\varphi-\nabla^W_{[T,X^-]}\varphi\nonumber\\
&=&-\frac{1}{2m(m+2)^2}X^-\cdot\rho_\theta\cdot D_+\varphi-\frac{i{\rm Scal}^W}{4m(m+2)^2}X^-\cdot D_+\varphi\nonumber\\&&+\frac{1}{4m}\nabla^W_{X^-}\rho_\theta\cdot\varphi-\frac{1}{2m(m+2)}\rho_\theta(X^-)\cdot D_+\varphi.\nonumber\\
\end{eqnarray}
Comparing \eqref{eq:nablawx-rhotheta} to \eqref{eq:rtx-middle} yields $\nabla^W_{X^-}\rho_\theta\cdot\varphi=0$. Finally, to show that $\rho_\theta\cdot\varphi$ is a CR twistor spinor, we write
\begin{eqnarray*} 
\nabla^W_{X}(\rho_\theta\cdot\varphi)&=&(\nabla^W_{X}\rho_\theta)\cdot\varphi+\rho_\theta\cdot \nabla^W_{X}\varphi\\
&=&-\frac{1}{m+2}\rho_\theta\cdot X^-\cdot D_+\varphi\\
&=&-\frac{1}{m+2}X^-\cdot\rho_\theta\cdot D_+\varphi-\frac{2}{m+2}\rho_\theta(X^-)\cdot D_+\varphi\\
&\bui{=}{\eqref{eq:nablawx-rhotheta}}&-\frac{m}{(m+2)^2}X^-\cdot\rho_\theta\cdot D_+\varphi+\frac{i{\rm Scal}^W}{(m+2)^2}X^-\cdot D_+\varphi\\
&\bui{=}{ \eqref{eq:d+rhothetavarphi}}&-\frac{1}{m+2} X^-\cdot D_+(\rho_\theta\cdot\varphi).
\end{eqnarray*}
This finishes the proof of the theorem.
\end{proof}

According to Proposition \ref{thm: antoholomiddle}, when $M$ has constant Webster scalar curvature ${\rm Scal}^W$ and vanishing torsion, the Clifford action of $\rho_\theta$ preserves the space of antiholomorphic CR twistor spinor in  $\Sigma_{\frac{m}{2}}(H(M))$. Since this action is skew-symmetric, one can always find an antiholomorphic CR twistor spinor which is also an eigenspinor of the symmetric endomorphsim $i\rho_\theta$. For this, we will restrict in the next theorem our study to the existence of an antiholomorphic CR twistor spinor $\varphi\in \Sigma_{\frac{m}{2}}(H(M))$ such that $\rho_\theta\cdot\varphi=ic\varphi$ for some real number $c$.

\begin{thm} \label{eq:equalitycasemiddle} Let $(M^{2m+1},H(M),J)$ be a strictly pseudoconvex CR manifold of constant Webster scalar curvature ${\rm Scal}^W$ and vanishing torsion with $m$ even. Let $\varphi$ be an antiholomorphic CR twistor spinor in $\Sigma_{\frac{m}{2}}(H(M))$ with $\rho_\theta\cdot\varphi=ic\varphi$ for some $c\in \mathbb{R}$. We set $K:=\frac{{\rm Scal}^W}{2m}+\frac{c}{m}$ and $\alpha:=-K+\frac{{\rm Scal}^W}{2(m+1)}$. Then, we have
\begin{itemize}
\item If $c=0$, the manifold is pseudo-Einstein. The spinor $\psi:=D_+\varphi$ is a holomorphic CR twistor spinor in $\Sigma_{\frac{m}{2}+1}(H(M))$. 
\item If $c=-\frac{(m+2){\rm Scal}^W}{4(m+1)}$, the manifold is Webster Ricci flat and the spinor field $\varphi$ is parallel if the manifold is compact. 
\item If $c\neq 0$ and $c\neq -\frac{(m+2){\rm Scal}^W}{4(m+1)}$. We let $r_0:=-\frac{2c}{\alpha-K}$. The Webster Ricci tensor satisfies
\begin{equation}\label{eq:tracericpower}
{\rm tr}(({\rm Ric}^W)^s)=(2m-r_0)K^s+r_0\alpha^s.
\end{equation}
for all $s\in \mathbb{N}$. In particular, it has two different eigenvalues $\alpha$ and $K$ with multiplicity $r_0$ and $2m-r_0$ respectively. 
\end{itemize}
\end{thm}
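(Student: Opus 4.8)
The plan is to exploit the first-order equations an antiholomorphic CR twistor spinor in the middle component satisfies and to turn them into algebraic constraints on the eigenvalues of the Webster Ricci endomorphism ${\rm Ric}^W$. Because the torsion vanishes, ${\rm Ric}^W$ commutes with $J$ by \eqref{eq:riccrho}, so $\rho_\theta$ is of type $(1,1)$ and one checks $i\,\rho_\theta(W)={\rm Ric}^W(W)$ for $W\in H^{0,1}(M)$ (and $-i\,\rho_\theta(W)={\rm Ric}^W(W)$ for $W\in H^{1,0}(M)$). With $\tau=0$ and ${\rm Scal}^W$ constant one has ${\rm Ric}^W(T)=0$, $d^W\rho_\theta=\delta^W\rho_\theta=0$, and by Proposition \ref{pro:middledimensioncha} $D_\theta^2\varphi=\lambda_0\varphi$ with $\lambda_0:=\tfrac{m+2}{4(m+1)}{\rm Scal}^W$. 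The twistor equation \eqref{eq:twistordef} for $r=\tfrac{m}{2}$ gives $\nabla^W_{X^+}\varphi=0$ and $\nabla^W_{X^-}\varphi=-\tfrac{1}{m+2}X^-\cdot\psi$, where $\psi:=D_+\varphi$, while \eqref{eq:nablatmiddle} together with $\rho_\theta\cdot\varphi=ic\varphi$ gives $\nabla^W_T\varphi=i\kappa\varphi$ with $\kappa:=-\tfrac{c}{4m}-\tfrac{{\rm Scal}^W}{8m(m+1)}$; note the identity $4\kappa=\alpha$. Differentiating $\psi=D_+\varphi$ with the commutator formulas of Lemma \ref{commutatorformulas} one finds that $\psi\in\Gamma(\Sigma_{\frac{m}{2}+1}(H(M)))$ is holomorphic, that $D_-\psi=\lambda_0\varphi$, $\nabla^W_{X^-}\psi=0$, $\nabla^W_T\psi=i\kappa\psi$, and that $\nabla^W_{X^+}\psi=-\tfrac12{\rm Ric}^W(X^+)\cdot\varphi+2\kappa\,X^+\cdot\varphi$.

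The central observation comes from \eqref{eq:scalwrhovarphi} (which follows from \eqref{eq:bracketx+nabla^Wd+} and $D_-\varphi=0$): combined with $\rho_\theta\cdot\varphi=ic\varphi$ and $i\,\rho_\theta(W)={\rm Ric}^W(W)$ on $H^{0,1}(M)$ it yields
\begin{equation*}
\big({\rm Ric}^W-K\big)(W)\cdot\varphi=0\qquad\text{for all }W\in H^{0,1}(M).
\end{equation*}
Diagonalising ${\rm Ric}^W$ in a local unitary frame $\{Z_a\}$ with eigenvalues $\mu_a$, this says $(\mu_a-K)\,\bar Z_a\cdot\varphi=0$: every direction actually carried by $\varphi$ is a $K$-eigendirection, so $\varphi$ is a section of $\Lambda^{m/2}$ of the $K$-eigenbundle of ${\rm Ric}^W$, forcing the multiplicity of $K$ to be at least $\tfrac{m}{2}$. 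Using the relation $\bar Z_a\cdot\psi=-(m+2)\,\nabla^W_{\bar Z_a}\varphi$ and the eigenvalue constraint coming from $\rho_\theta\cdot\psi=ic'\psi$ (with $c'=\tfrac{(m+2)c+{\rm Scal}^W}{m}$) one shows $\psi$ is also supported on $K$-eigendirections, so if $\psi\neq0$ the multiplicity of $K$ is at least $\tfrac{m}{2}+1$. The remaining eigenvalue is then pinned down by feeding $\psi\neq0$, the formula for $\nabla^W_{X^+}\psi$, and the integrability/curvature conditions coming from $\nabla^W_{X^-}\psi=0$ and $\nabla^W_T\psi=i\kappa\psi$ back into the equations: one obtains that every non-$K$ eigenvalue of ${\rm Ric}^W$ equals $4\kappa=\alpha$. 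Hence ${\rm Ric}^W$ has only the two eigenvalues $K$ and $\alpha$, which gives \eqref{eq:tracericpower} with $r_0$ the multiplicity of $\alpha$ on $H(M)$; putting $s=1$ there and using ${\rm tr}\,{\rm Ric}^W={\rm Scal}^W$ together with $2mK={\rm Scal}^W+2c$ yields $r_0=-\tfrac{2c}{\alpha-K}$.

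The trichotomy then follows by comparing $\alpha$ and $K$. One has $\alpha=K$ exactly when $c=-\tfrac{(m+2){\rm Scal}^W}{4(m+1)}$, and in that case ${\rm Ric}^W=K\,{\rm id}$, so ${\rm Scal}^W=2mK$ forces ${\rm Scal}^W=0$ and $M$ is Webster Ricci flat; on a closed manifold $\|\psi\|_{L^2}^2=\lambda_0\|\varphi\|_{L^2}^2=0$, so $\psi=0$, whence $D_\theta\varphi=0$ and, by the twistor equation, $\nabla^W\varphi=0$. If $c=0$ then $r_0=0$, so $M$ is pseudo-Einstein, and the formula for $\nabla^W_{X^+}\psi$ reduces to $(2\kappa-\tfrac K2)X^+\cdot\varphi=-\tfrac{\lambda_0}{m}X^+\cdot\varphi$, which together with $\nabla^W_{X^-}\psi=0$ and $D_+\psi=0$ is precisely the CR twistor equation for $\psi\in\Sigma_{\frac{m}{2}+1}(H(M))$, so $\psi$ is a holomorphic CR twistor spinor. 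The remaining case $c\neq0$, $c\neq-\tfrac{(m+2){\rm Scal}^W}{4(m+1)}$, is exactly the situation $\alpha\neq K$ just treated. The degenerate possibility $\psi=0$ (not excluded on a non-closed manifold) is handled separately: then $\varphi$ is holomorphic as well, so $\nabla^W\varphi=0$ on $H(M)$, and substituting $R^W(X,Y)\varphi=i\kappa\,d\theta(X,Y)\varphi$ into the Ricci identity \eqref{eq:riccifor} forces ${\rm Scal}^W=0$, which falls under the Webster Ricci flat case when $c=0$ and under the two-eigenvalue case with $r_0=m$ when $c\neq0$.

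The main obstacle is the identification, and constancy, of the non-$K$ eigenvalue of ${\rm Ric}^W$. Since ${\rm Ric}^W$ need not be parallel, making this step rigorous requires restricting to the open dense set where the eigenvalue multiplicities of ${\rm Ric}^W$ are locally constant (so the eigenbundles are genuine smooth subbundles), then controlling carefully how $\nabla^W$ and the mixed curvature $R^W(H^{1,0},H^{0,1})$ interact with the $K$-eigenbundle decomposition when acting on $\psi$, and finally extending the conclusion by continuity; it also requires organising every second-order computation around frames parallel at a point, since the commutator identities of Lemma \ref{commutatorformulas} are stated in that form. Everything else amounts to substituting the explicit values of $\kappa$, $\lambda_0$, $K$ and $\alpha$.
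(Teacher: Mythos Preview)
Your overall strategy is a reasonable alternative to the paper's, but the crucial step is not actually carried out. You correctly derive the key pointwise relation $({\rm Ric}^W-K)(W)\cdot\varphi=0$ and $({\rm Ric}^W-K)(W)\cdot\psi=0$ for $W\in H^{0,1}(M)$, so both $\varphi$ and $\psi$ are annihilated by the $\bar Z_a$ coming from non-$K$ eigendirections. But the assertion ``one obtains that every non-$K$ eigenvalue of ${\rm Ric}^W$ equals $4\kappa=\alpha$'' is exactly the heart of the theorem, and you do not prove it: the curvature/integrability conditions you list ($\nabla^W_{X^-}\psi=0$, $\nabla^W_T\psi=i\kappa\psi$, and the formula for $\nabla^W_{X^+}\psi$) do not by themselves force the remaining eigenvalues to coincide with $\alpha$, and you yourself flag this as the ``main obstacle'' without resolving it. Your treatment of the special cases $c=0$ and $c=-\tfrac{(m+2){\rm Scal}^W}{4(m+1)}$ then inherits this gap, since you deduce them from the two-eigenvalue structure rather than independently.

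The paper avoids this difficulty entirely by never working with the (a priori non-smooth) eigenbundle decomposition. It first computes $|\rho_\theta|^2$ directly from the spinorial identities, which immediately settles the two special values of $c$ via $|\rho_\theta-\tfrac{{\rm Scal}^W}{4m}d\theta|^2=\tfrac{{\rm Scal}^W c(m+2)}{m(m+1)}+\tfrac{2c^2}{m}$. For the generic case it runs an induction (after Moroianu \cite{M:97}) on the scalar quantities ${\rm tr}(({\rm Ric}^W)^s)$: introducing $\rho_{s,\theta}(X)=({\rm Ric}^W)^s(JX)$ and the four assertions $(a_s)$ (the trace formula), $(b_s)$ ($\nabla^W_X\rho_{s,\theta}\cdot\varphi=0$), $(c_s)$ (an explicit expression for $\nabla^W_X\rho_{s,\theta}\cdot D_+\varphi$), and $(d_s)$ ($\delta^W\rho_{s,\theta}=0$), one shows $(a_s)\Rightarrow(b_s),(c_s)$, $(a_s)\wedge(d_{s-1})\Rightarrow(d_s)$, and $(a_s)\wedge(b_s)\wedge(c_s)\wedge(d_s)\Rightarrow(a_{s+1})$. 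This yields ${\rm tr}(({\rm Ric}^W)^s)=(2m-r_0)K^s+r_0\alpha^s$ for all $s$, and Newton's identities then force the spectrum of ${\rm Ric}^W$ to be $\{K,\alpha\}$ with the stated multiplicities, globally and with no regularity assumption on the eigenbundles. If you want to salvage your direct approach, the missing ingredient is a genuine second-order computation pinning down each $\mu_a$; but the power-trace induction is both shorter and cleaner.
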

\begin{proof}
Since $\rho_\theta\cdot\varphi=ic\varphi$, then we get from Equation \eqref{eq:d+rhothetavarphi} that 
\begin{equation}\label{eq:rhothetad+middle}
\rho_\theta\cdot D_+\varphi=i\left(\frac{(m+2)c}{m}+\frac{{\rm Scal}^W}{m}\right) D_+\varphi.
\end{equation}
Hence, Equations \eqref{eq:scalwrhovarphi} and \eqref{eq:nablawx-rhotheta} become
\begin{equation}\label{eq:rhothetax-d+varphi}
i\rho_\theta(X^-)\cdot\varphi=K X^-\cdot\varphi,\,\, i\rho_\theta(X^-)\cdot D_+\varphi=K X^-\cdot D_+\varphi.
\end{equation}
Using \eqref{eq:bracketnabla^WD}, one can easily show that 
\begin{equation}\label{eq:nablax-d+middle}
\nabla^W_{X}D_+\varphi=\frac{i}{2}\rho_\theta(X^+)\cdot\varphi+\frac{\alpha}{2}X^+\cdot\varphi.
\end{equation}
Differentiating \eqref{eq:rhothetad+middle} and contracting gives after some lengthy computations with the help of \eqref{eq:diverricciform}, \eqref{eq:dwrhotheta}, \eqref{eq:rhothetax-d+varphi} and \eqref{eq:nablax-d+middle} the following
$$|\rho_\theta|^2=\frac{{\rm Scal}^W c(m+2)}{m(m+1)}+\frac{({\rm Scal}^W)^2}{2m}+\frac{2c^2}{m}.$$
Therefore, this last equation is equivalent to 
\begin{eqnarray}\label{eq:rhothetapseudoeinstein}
|\rho_\theta-\frac{{\rm Scal}^W}{4m}d\theta|^2&=&|\rho_\theta|^2-\frac{({\rm Scal}^W)^2}{2m}\nonumber\\
&=&\frac{{\rm Scal}^W c(m+2)}{m(m+1)}+\frac{2c^2}{m}.
\end{eqnarray}
Depending on the value of $c$, we discuss three cases: 
\begin{itemize} 
\item If $c=0$. In this case, Equation \eqref{eq:rhothetapseudoeinstein} gives that the manifold $M$ is pseudo-Einstein. Hence, using \eqref{eq:nablax-d+middle}, we get
\begin{eqnarray*}
\nabla^W_{X}(D_+\varphi)
=-\frac{1}{m}X^+\cdot D_-D_+\varphi.
\end{eqnarray*}
In the last equality, we use that $D_\theta^2\varphi=D_-D_+\varphi=\frac{(m+2){\rm Scal}^W}{4(m+1)}\varphi$. This shows $D_+\varphi$ is a holomorphic twistor spinor in $\Sigma_{\frac{m}{2}+1}(H(M))$.  
\item If $c=-\frac{(m+2){\rm Scal}^W}{4(m+1)}$.  We use again  \eqref{eq:rhothetapseudoeinstein} to get the following identity
$$|\rho_\theta-\frac{{\rm Scal}^W}{4m}d\theta|^2=-\frac{(m+2)^2({\rm Scal}^W)^2}{8m(m+1)^2}\leq 0.$$
Hence, the Webster scalar curvature must vanish as well as $\rho_\theta$, which means the manifold is Webster Ricci flat. Therefore $D_\theta^2\varphi=0$ and, thus, $D_\theta\varphi=0$ if $M$ is compact. The spinor $\varphi$ becomes then parallel. This shows the second part. 
\item If $c\neq 0$ and $c\neq -\frac{(m+2){\rm Scal}^W}{4(m+1)}$. In this case, we have $\alpha\neq K$ and we let $r_0=-\frac{2c}{\alpha-K}$. Next, we show that the Webster Ricci tensor has the eigenvalues $\alpha$ and $K$ with mutiplicity $r_0$ and $2m-r_0$ respectively. For this, we proceed as in \cite{M:97}. We define the skew-symmetric endomorphism $\rho_{s,\theta}(X):=({\rm Ric}^W)^s(JX)$ on $H(M)$ and denote the following assertions by: 
\begin{gather*} 
(a_s): {\rm tr}(({\rm Ric}^W)^s)=(2m-r_0)K^s+r_0\alpha^s\\
(b_s): \nabla^W_X\rho_{s,\theta}\cdot\varphi=0\\
(c_s): \nabla^W_X\rho_{s,\theta}\cdot D_+\varphi=-K^s\rho_\theta(X^+)\cdot\varphi+i\alpha K^s X^+\cdot\varphi+\rho_{s+1,\theta}(X^+)\cdot\varphi\\-\alpha\rho_{s,\theta}(X^+)\cdot\varphi\\
(d_s):\delta^W(\rho_{s,\theta})=0.
\end{gather*}
We will prove in the sequel that these assertions are true for all $s\in \N$. In fact, they are true for $s=1$. Indeed, it is easy to see that the r.h.s. of $(a_1)$ is equal to 
$$(2m-r_0)K+r_0\alpha=2mK+r_0(\alpha-K)=2mK-2c={\rm Scal}^W.$$
The assertion $(b_1)$ comes from the proof of Proposition \ref{thm: antoholomiddle} and $(c_1)$ comes from differentiating \eqref{eq:rhothetad+middle} and using \eqref{eq:nablax-d+middle}. Also $(d_1)$ is a consequence of \eqref{eq:diverricciform}, since the Webster scalar curvature ${\rm Scal}^W$ is constant and the torsion is zero. In the sequel, we will prove that $(a_s)\Longrightarrow (b_s), (c_s)$ and $(a_s), (d_{s-1})\Longrightarrow (d_s)$ and $(a_s), (b_s),(c_s),(d_s)\Longrightarrow (a_{s+1})$ for all $s\in \N$. For this, we consider an orthonormal frame $\{X_l, X_a\}$ of $H(M)$ such that ${\rm Ric}^W(X_a)=\mu_a X_a$ for $a\in \{1,\ldots, r\}$ and ${\rm Ric}^W(X_l)=K X_l$ for $l\in \{r+1,\ldots, 2m\}$ with $\mu_a\neq K$.  Then according to \eqref{eq:rhothetax-d+varphi}, we get that $X_a^-\cdot\varphi=0$ and $X_a^-\cdot D_+\varphi=0$. Assume now that $(a_s)$ is true. Using that $\sum_{l=r+1}^{2m} X_l\cdot JX_l\cdot\varphi=-\sum_{a=1}^{r} X_a\cdot JX_a\cdot\varphi$ as a consequence of the fact that $\varphi\in  \Sigma_{\frac{m}{2}}(H(M))$, we compute
\begin{eqnarray}\label{eq:rhostvarphimiddle}
\rho_{s,\theta}\cdot\varphi&=& \frac{1}{2}\sum_{a=1}^r\mu_a^s X_a\cdot JX_a\cdot\varphi+\frac{1}{2}\sum_{l=r+1}^{2m}K^s X_l\cdot JX_l\cdot\varphi\nonumber\\
&=&\frac{1}{2}\sum_{a=1}^r(\mu_a^s-K^s) X_a\cdot JX_a\cdot\varphi\nonumber\\
&=&-\frac{i}{2}\sum_{a=1}^r(\mu_a^s-K^s) \varphi\nonumber\\
&=&-\frac{i}{2}\left({\rm tr}(({\rm Ric}^W)^s)-2mK^s\right) \varphi\nonumber\\
&\bui{=}{\eqref{eq:tracericpower}}&-\frac{i}{2}r_0(\alpha^s-K^s)\varphi. 
\end{eqnarray}
In the same way, we show that 
\begin{equation}\label{eq:rhosd+varphimiddle}
\rho_{s,\theta}\cdot D_+\varphi=\left(-\frac{i}{2}r_0(\alpha^s-K^s)+2iK^s\right) D_+\varphi.
\end{equation}
Now, we differentiate \eqref{eq:rhostvarphimiddle} and \eqref{eq:rhosd+varphimiddle}. For any $X\in H(M)$, we compute
\begin{eqnarray*}
(\nabla^W_X\rho_{s,\theta})\cdot\varphi&=& -\frac{i}{2}r_0(\alpha^s-K^s)\nabla^W_X\varphi-\rho_{s,\theta}\cdot\nabla^W_X\varphi\\
&=&\frac{i}{2(m+2)}r_0(\alpha^s-K^s)X^-\cdot D_+\varphi+\frac{1}{m+2}\rho_{s,\theta}\cdot X^-\cdot D_+\varphi\\
&=&\frac{i}{2(m+2)}r_0(\alpha^s-K^s)X^-\cdot D_+\varphi+\frac{1}{m+2}X^-\cdot \rho_{s,\theta}\cdot D_+\varphi\\&&+\frac{2}{m+2}\rho_{s,\theta}(X^-)\cdot D_+\varphi\\
&\bui{=}{\eqref{eq:rhosd+varphimiddle}}&\frac{2iK^s}{m+2}X^-\cdot D_+\varphi-\frac{2i}{m+2}({\rm Ric}^W)^s(X^-)\cdot D_+\varphi.
\end{eqnarray*}
Now, if we take $X=X_a$ for $a\in \{1,\ldots, r\}$, we find $\nabla^W_{X_a}\rho_{s,\theta}\cdot\varphi=\frac{2i(K^s-\mu_a^s)}{m+2}X_a^-\cdot D_+\varphi=0$. Also, if we take $X=X_l$
for $l\in \{r+1,\ldots, 2m\}$, we get that $\nabla^W_{X_a}\rho_{s,\theta}\cdot\varphi=0$. Thus, we deduce that $\nabla^W_X\rho_{s,\theta}\cdot\varphi=0$ which is $(b_s)$. To show $(c_s)$, we compute 
\begin{eqnarray}\label{eq:nablarhothetad+varphimiddle}
\nabla^W_X\rho_{s,\theta}\cdot D_+\varphi&=& \left(-\frac{i}{2}r_0(\alpha^s-K^s)+2iK^s\right) \nabla^W_XD_+\varphi-\rho_{s,\theta}\cdot\nabla^W_X D_+\varphi\nonumber\\  
&\bui{=}{\eqref{eq:nablax-d+middle}}&\left(-\frac{i}{2}r_0(\alpha^s-K^s)+2iK^s\right) \left(\frac{i}{2}\rho_\theta(X^+)\cdot\varphi+\frac{\alpha}{2}X^+\cdot\varphi\right)\nonumber\\&&-\rho_{s,\theta}\cdot\left(\frac{i}{2}\rho_\theta(X^+)\cdot\varphi+\frac{\alpha}{2}X^+\cdot\varphi\right)\nonumber\\
&\bui{=}{\eqref{eq:rhostvarphimiddle}}&-K^s\rho_\theta(X^+)\cdot\varphi+i\alpha K^s X^+\cdot\varphi+\rho_{s+1,\theta}(X^+)\cdot\varphi\nonumber\\&&-\alpha\rho_{s,\theta}(X^+)\cdot\varphi\nonumber.\\
\end{eqnarray}
Assume now that both $(a_s)$ and $(d_{s-1})$ are true. From one side, we have 
\begin{equation}\label{eq:tracericcipower}
\sum_{l=1}^{2m}(\nabla^W_X{\rm Ric}^W)(e_l, ({\rm Ric}^W)^{s-1}(e_l))=\frac{1}{s}X\left({\rm tr}({\rm Ric}^W)^{s}\right)\bui{=}{\eqref{eq:tracericpower}}0.
\end{equation}
From the other side, we compute for any $X\in H(M)$
\begin{eqnarray*}
    0&\bui{=}{\eqref{eq:dwrhotheta}}&\sum_{l=1}^{2m}(d^W\rho_{\theta})(Je_l,({\rm Ric}^W)^{s-1}(e_l),JX)\\
    &=&\sum_{l=1}^{2m}(\nabla^W_{Je_l}{\rm Ric}^W)(J({\rm Ric}^W)^{s-1}(e_l),JX)\\&&+\sum_{l=1}^{2m}(\nabla^W_{({\rm Ric}^W)^{s-1}(e_l)}{\rm Ric}^W)(e_l,JX)\\&&-\sum_{l=1}^{2m}(\nabla^W_{JX}{\rm Ric}^W)(e_l,({\rm Ric}^W)^{s-1}(e_l))\\
    &\bui{=}{\eqref{eq:tracericcipower}}&2\sum_{l=1}^{2m}(\nabla^W_{e_l}{\rm Ric}^W)({\rm Ric}^{s-1}(e_l),JX)\\
&=&2g_\theta(\delta^W\rho_{s,\theta},X)-2{\rm Ric}^W(\delta^W\rho_{s-1,\theta},X)
\end{eqnarray*}
which gives $(d_s)$, since $(d_{s-1})$ is true. Now, we assume that $(a_s), (b_s), (c_s),(d_s)$ are all true, we contract Equation \eqref{eq:nablarhothetad+varphimiddle} to get
\begin{eqnarray}\label{eq:drhodeltarhomiddle} 
(d^W\rho_{s,\theta}+\delta^W\rho_{s,\theta})\cdot D_+\varphi&=&-iK^s\left(c-\frac{{\rm Scal}^W}{2}\right)\cdot\varphi-im\alpha K^s\varphi\nonumber\\
&&+\sum_{l=1}^{2m} e_l\cdot\rho_{s+1,\theta}(e_l^+)\cdot\varphi-\alpha\sum_{l=1}^{2m} e_l\cdot\rho_{s,\theta}(e_l^+)\cdot\varphi\nonumber\\
&=&2iK^s(\alpha+(m+1)K)\varphi-i{\rm tr}(({\rm Ric}^W)^{s+1})\varphi\nonumber\\&&+i\alpha{\rm tr}(({\rm Ric}^W)^{s})\varphi.
\end{eqnarray}
In the last equality, we use the following computation 
\begin{eqnarray*}
\sum_{l=1}^{2m} e_l\cdot ({\rm Ric}^W)^{s+1}(e_l^+)\cdot\varphi
=(-{\rm tr}(({\rm Ric}^W)^{s+1})+mK^{s+1})\varphi
\end{eqnarray*}
where we decompose the basis $e_l$ into $X_a$ and $X_l$ and use that $X_a^-\cdot\varphi=0$. Taking the Hermitian product of \eqref{eq:drhodeltarhomiddle} with $\varphi$ and using $(d_s)$ as well as $(b_s)$, which by contracting gives $d^W\rho_{s,\theta}\cdot\varphi=0$, allow to get after using $(a_s)$ that 
\begin{eqnarray*}
{\rm tr}(({\rm Ric}^W)^{s+1})&=&2K^s(\alpha+(m+1)K)+\alpha{\rm tr}(({\rm Ric}^W)^{s})\\
&=&2K^s(\alpha+(m+1)K)+\alpha((2m-r_0)K^s+r_0\alpha^s)\\
&=&(2m-r_0)K^{s+1}+r_0\alpha^{s+1},
\end{eqnarray*}
by the definition of $\alpha$ and $K$. This shows the required implications. Thus $(a_s)$ is true for all $s$. From Newton's relations, we deduce that the Webster Ricci tensor has two eigenvalues $\alpha$ and $K$ with multiplicity $r_0$ and $2m-r_0$.
\end{itemize}
\end{proof}

\section{Examples} \label{Examples}
In this section, we will construct examples of a CR manifold $M$ of constant Webster scalar curvature with vanishing torsion that carries an antiholomorphic CR twistor spinor $\varphi$ in $\Sigma_{r}(H(M))$ for each $r\in \{0,\ldots,m\}$. This includes the case of the middle slot $r=\frac{m}{2}$ which does not occur on compact K{\"a}hler manifolds (apart from parallel spinors; see \cite{P:11}). 


\subsection{Pseudo-Einstein CR manifold}
In this part, we will construct an example of a pseudo-Einstein CR manifold carrying an antiholomorphic CR twistor spinor. Our construction is related to the existence of a K\"ahlerian twistor spinor that arises on K\"ahler-Einstein manifolds for some spin$^c$ structures (see \cite{HM} for more details). This provides an example of Theorem \ref{thm:equalitycasegeneral} and the first part of Theorem \ref{eq:equalitycasemiddle}. \\ 

Let $(N^{2m},g_N, J)$ be a K\"ahler-Einstein compact manifold  with scalar curvature equal to $4m(m+1)$. We consider the $\mathbb{S}^1$-bundle $M:=\mathcal{P}(N)\times_{\rm det} \mathbb{S}^1$ over $N$ where $\mathcal{P}(N)$ is the unitary $U(m)$-bundle of unitary frames, as in \cite[p. 127-128]{L:18}. We denote by $\pi:M\to N$ the projection map. The Levi-Civita connection on $\mathcal{P}(N)$  induces a connection form $\zeta: TM\to i\mathbb{R}$. We set $H(M):={\rm ker}(\zeta)$ the horizontal distribution of the connection and $\hat{J}$ the lift of the complex structure $J$ on $N$. For simplicity, we also denote $\hat J$ by $J$.\\  

The curvature $d\zeta$ of this connection is the pull-back by $\pi$ of $i{\rm Ric}^N(\cdot,J\cdot)=-2i(m+1)\omega$ where $\omega=g_N(J\cdot,\cdot)$ is the K\"ahler form on $N$. By setting $\theta:=\frac{i}{m+1}\zeta$, we get that $\frac{1}{2}d\theta=\pi^*\omega$. Therefore, $(M^{2m+1},H(M),J,\theta)$ is a strictly pseudoconvex CR manifold and the induced metric $g_\theta=\frac{1}{2}d\theta(\cdot,J\cdot)=\pi^*g_N$  is just the pull-back of the metric on $N$. The vector field $T$ associated with the $1$-form $\theta$ is the fundamental vector field of the principal bundle and, thus, it is Killing. The Tanaka-Webster connection $\nabla^W$ is just the pull-back of the Levi-Civita connection of $(N,g_N)$ with vanishing torsion. Hence, the CR manifold $M$ is pseudo-Einstein with constant Webster scalar curvature ${\rm Scal}^W=4m(m+1)$.\\

According to \cite{HM}, for each $r\in\{0,\ldots,m+1\}$, there exists a spin$^c$ structure on $N$ with auxiliary line bundle $\mathcal{L}^q$ with $q=\frac{p}{m+1}(2r-m-1)\in \Z$ such that the associated spinor bundle carries a K\"ahlerian twistor spinor $\varphi_{r-1}\in \Sigma_{r-1}N$ (as well as $\varphi_{r}\in \Sigma_{r}N$), that is 
$$\nabla^N_X\varphi_{r-1}=-X^-\cdot\varphi_r,$$
for all $X\in TN$. Here $p$ is the index of $N$, that is $\mathcal{L}^p=K_N$, where $K_N$ is the canonical line bundle. By tracing this equation, we get that $\varphi_{r-1}$ is antiholomorphic ($D_-\varphi_{r-1}=0$) and that $\varphi_r=\frac{1}{2r}D_+\varphi_{r-1}$. Hence, we write that 
$$\nabla^N_X\varphi_{r-1}=-\frac{1}{2r}X^-\cdot D_+\varphi_{r-1},$$
The curvature of the auxiliary line bundle $\mathcal{L}^q$ of the spin$^c$ structure is equal to $$\frac{q}{p}i{\rm Ric}^N\circ J=2i(m+1)\frac{q}{p}\omega=2i(2r-m-1)\omega$$ 
where, as before, $\omega=g_N(J\cdot,\cdot)$ is the K\"ahler form on $N$. \\ 

The pull-back of the spin$^c$ structure on $N$ gives rise to a spin$^c$ structure on $H(M)$ with auxiliary line bundle $\widetilde{\mathcal{L}}^q:=\pi^*(\mathcal{L}^q)$. The curvature of this bundle is equal to $2i(2r-m-1)\pi^*\omega=i(2r-m-1)d\theta$. The pull-back of the spinor $\varphi_{r-1}$ into $H(M)$ gives rise to a spinor field, that we still denote by $\varphi_{r-1}$, satisfying the system of equations
\begin{equation}\label{eq:nablawspinc}
\nabla^W_T\varphi_{r-1}=0,\,\, \nabla^W_X\varphi_{r-1}=-\frac{1}{2r}X^-\cdot D_+\varphi_{r-1}
\end{equation}
for all $X\in H(M)$. Here $\nabla^W$ is the Tanaka-Webster spin$^c$ connection on $\Sigma(H(M))$.\\

Let us now denote by $\nabla$ the connection on $\widetilde{\mathcal{L}}^q$ with curvature $i(2r-m-1)d\theta$ and consider the connection $\nabla'$ on $\widetilde{\mathcal{L}}^q$ given for any section $\sigma\in\Gamma(\widetilde{\mathcal{L}}^q)$ by 
\begin{equation*}
\nabla'_X\sigma:=\left\{
\begin{matrix}
\nabla_X\sigma & \text{if}& \text{$X\in H(M)$},\\\\
\nabla_T\sigma-i(2r-m-1)\sigma & \text{if}& \text{$X=T$.}
\end{matrix}\right.
\end{equation*}
We compute the curvature $R'$ of the connection $\nabla'$. For any $X,Y\in H(M)$ parallel at some point (with respect to the Tanaka-Webster connection $\nabla^W$), we have 
\begin{eqnarray*}
R'(X,Y)\sigma&=&\nabla'_X\nabla'_Y\sigma-\nabla'_Y\nabla'_X\sigma-\nabla'_{[X,Y]}\sigma\\
&=&\nabla_X\nabla_Y\sigma-\nabla_Y\nabla_X\sigma+d\theta(X,Y)\nabla_T'\sigma\\
&=& R(X,Y)\sigma-d\theta(X,Y)\nabla_T\sigma+d\theta(X,Y)\nabla_T\sigma\\&&-i(2r-m-1)d\theta(X,Y)\sigma\\
&=&0.
\end{eqnarray*}
In the last equality, we use the fact that the curvature $R$ of the connection $\nabla$ is equal to $i(2r-m-1)d\theta$. On the other hand, we compute the curvature in the $T$-direction. For any $X\in H(M)$ parallel at some point, we write 
\begin{eqnarray*}
R'(T,X)\sigma&=&\nabla'_T\nabla'_X\sigma-\nabla'_X\nabla'_T\sigma-\nabla'_{[T,X]}\sigma\\
&=&\nabla_T\nabla_X\sigma-i(2r-m-1)\nabla_X\sigma-\nabla_X\nabla_T\sigma+i(2r-m-1)\nabla_X\sigma\\
&=&R(T,X)\sigma\\&=&i(2r-m-1)d\theta(T,X)\sigma=0,   
\end{eqnarray*}
since the torsion vanishes and $T\lrcorner d\theta=0$. Therefore, we deduce that $(\widetilde{\mathcal{L}}^q,\nabla')$ is a flat line bundle over the manifold $M$. By \cite{K}, the manifold $N$ is simply connected and by using the long Thom-Gysin sequence of the bundle $(M,\pi,N,\mathbb{S}^1)$(see \cite{M-S}), we deduce that $M$ is simply connected. Hence by \cite[Lem. 2.1]{M}, the spin$^c$ structure defined by the connection $\nabla'$ on $\widetilde{\mathcal{L}}^q$ is actually a spin structure and the corresponding connection $\nabla'$ on $\Sigma(H(M))$ is just the spin connection. Moreover, we have for all $X\in TM$ that 
\begin{equation}\label{relationspincspin}
\nabla'_X\varphi_{r-1}=\nabla^W_X\varphi_{r-1}+\frac{i}{2}\alpha(X)\varphi_{r-1},
\end{equation}
where $\alpha$ is a real $1$-form given by $i\alpha:=A'-A$, which is the difference of the linear connections of $\nabla'$ and $\nabla$ \cite{F, NR}. Now from the definition of $\nabla'$, one can easily see that $\alpha(T)=-(2r-m-1)$ and $\alpha(X)=0$ for all $X\in H(M)$. 
Therefore, plugging \eqref{eq:nablawspinc} into \eqref{relationspincspin}, we deduce that 
$$\nabla'_T\varphi_{r-1}=-\frac{i(2r-m-1)}{2}\varphi_{r-1},\,\, \nabla'_X\varphi_{r-1}= -\frac{1}{2r}X^-\cdot D_+\varphi_{r-1}$$
for all $X\in H(M)$. Hence $\varphi_{r-1}$ is an antiholomorphic CR twistor spinor for the spin structure. Finally, when $m$ is even, we take $r=\frac{m}{2}+1$ to have a CR twistor spinor in the middle slot. 

\subsection{Two eigenvalues of the Webster Ricci tensor in the middle slot}
In this part, we will construct an example of a CR manifold that carries a parallel spinor (hence a twistor spinor) and such that the Webster Ricci tensor has two different eigenvalues. This provides an example of the third part of Theorem \ref{eq:equalitycasemiddle}. \\

Let $(M_1,g_1,\xi_1)$ be a Lorentzian Einstein-Sasaki manifold of dimension $2m+1\geq 3$
with negative scalar curvature ${\rm Scal}_{g_1}=-2m(2m+1)$. The kernel $H_1$ of the dual $1$-form $\theta_1=g_1(\xi_1,\cdot)$ to the unit timelike
Reeb vector is contact and admits the complex  structure $J_1=\nabla^{g_1}\xi_1$. We assume that $M_1$ is spin and admits imaginary Killing spinors $\phi_1$ 
and $\psi_1$ with Killing number $\pm\frac{i}{2}$, i.e.,
\[\nabla_X^{g_1}\phi_1=-\frac{i}{2} X\cdot_{M_1}\phi_1\qquad\mbox{and}\qquad \nabla_X^{g_1}\psi_1=\pm\frac{i}{2} X\cdot_{M_1}\psi_1\]
for any $X\in TM_1$. The sign of the Killing number
of $\psi_1$ depends on whether $m$ is even or odd.
These spinors satisfy
\[
\xi_1\cdot_{M_1}\phi_1=-\phi_1,\qquad \xi_1\cdot_{M_1}\psi_1=-\psi_1,
\]
\[ id\theta_1\cdot_{M_1}\phi_1=2m\phi_1,\qquad id\theta_1\cdot_{M_1}\psi_1=-2m\psi_1\]
and 
\[
(X+iJ_1X)\cdot_{M_1}\phi_1=0,\qquad (X-iJ_1X)\cdot_{M_1}\psi_1=0,\qquad X\in H_1
\]
(see \cite{Baum}).\\

Similarly, let $(M_2,g_2,\xi_2)$ be a Riemannian Einstein-Sasaki spin manifold of dimension $2m+1$, but with positive
scalar curvature ${\rm Scal}_{g_2}=2m(2m+1)$. Again, let $\theta_2$ be the dual $1$-form 
with complex structure $J_2=\nabla^{g_2}\xi_2$ and $\phi_2,\psi_2$  be
Killing spinors to the real Killing numbers $\mp\frac{1}{2}$
with (see \cite[Cor. 6.10]{FK}) 
\[
\xi_2\cdot_{M_2}\phi_2=-i\phi_2,\qquad \xi_2\cdot_{M_2}\psi_2=-i\psi_2,
\]
\[ id\theta_2\cdot_{M_2}\phi_1=2m\phi_2,\qquad id\theta_2\cdot_{M_2}\psi_2=-2m\psi_2\]
and 
\[
(Y+iJ_2Y)\cdot_{M_2}\phi_2=0,\qquad(Y-iJ_2Y)\cdot_{M_2}\psi_2=0,\qquad Y\in H_2\ .
\]

Define $F:=M_1\times M_2$ with product metric 
$f:=g_1\times g_2$.
This is a Lorentzian spin manifold of dimension $2(2m+1)$.
The spinor bundle $\Sigma(F)$ of $F$
is identified with two copies of the product $\Sigma(M_1)\otimes \Sigma(M_2)$ of spinors on $M_1$ and $M_2$, i.e.,
\[ \Sigma(F)\ =\ \Sigma_+(F)\oplus \Sigma_-(F)\ \cong\ (\Sigma(M_1)\otimes \Sigma(M_2))\ \oplus\ 
(\Sigma(M_1)\otimes \Sigma(M_2))\ ,
\]
where Clifford multiplication by $X\in TM_1$ and $Y\in TM_2$ with some spinor
$\Phi^*\ =\ ( \alpha_1\otimes \alpha_2,\beta_1\otimes \beta_2 )$
is given by \cite{Baer}
\[
X\cdot \Phi^* = i\cdot (\, (X\cdot_{M_1}\beta_1)\otimes \beta_2, (-X\cdot_{M_1}\alpha_1)\otimes \alpha_2\, ),
\]
\[
Y\cdot \Phi^* = (\, \beta_1\otimes (Y\cdot_{M_2}\beta_2), \alpha_1\otimes (Y\cdot_{M_2}\alpha_2)\, )\ .
\]

We set $\Phi^*_1:=(0,\phi_1\otimes \psi_2)$ and $\Phi^*_2:=(0,\psi_1\otimes\phi_2)$.
Since $\phi_j,\psi_j,j\in\{1,2\},$ are Killing spinors, e.g., we have
$\nabla^f_X\Phi^*_1=-\frac{i}{2}(0,(X\cdot_{M_1}\phi_1)\otimes \psi_2)$ for $X\in TM_1$ and
$\nabla^f_Y\Phi^*_1=\pm\frac{1}{2}(0,\phi_1\otimes (Y\cdot_{M_2}\psi_2))$ for $Y\in TM_2$. It is straightforward
to check  that $\Phi^*_1$ and $\Phi^*_2$ are (conformal) twistor spinors on $F$, i.e.,
\[
\nabla^f_Z\Phi^*_j +\frac{1}{2(2m+1)}Z\cdot D^f\Phi^*_j=0,\qquad Z\in TF\quad (j=1,2)
\]
($D^f$ the Dirac operator).
We set $\chi:=\xi_1+\xi_2$, which is  a null Killing vector with dual $1$-form $\theta^*:=\theta_1+\theta_2$ on $F$.
It is not difficult to check that the  twistors $\Phi^*_1$ and $\Phi^*_2$ are parallel in direction of $\chi$ and satisfy $d\theta^*\cdot \Phi^*_{j}=0$, for $j=1,2$. Hence,
the Lie derivative $\mathcal{L}_\chi\Phi^*_{j}= \nabla_\chi^f\Phi^*_{j}-\frac{1}{4}d\theta^*\cdot\Phi^*_{j}=0$ vanishes as well, for all $j=1,2$. \\

We assume now that each Sasakian manifold, $M_1$ and $M_2$, arises as the bundle of units in the canonical line bundle of some
K{\"a}hler-Einstein manifold. Then $F$ is a torus bundle and the integral curves of $\chi$
are closed circles, which run on the diagonals of the  
vertical tori. The  quotient space $M:=F/\chi$ (by the flow of $\chi$)
is a smooth manifold of real dimension $4m+1$ with  projection map $\pi:F\to M$. It is straightforward to check that
$H_1\oplus H_2$ with complex structure $J_1+J_2$ projects via $\pi$ to a CR structure $(H(M),J)$ on $M$. Also
$\theta^*$ descends to some contact form $\theta$, which is an adapted pseudo-Hermitian $1$-form for $(M,H(M),J)$ of vanishing Webster torsion (since $\theta^*$ is a Killing form).
The pseudo-Hermitian Ricci form is $\rho_\theta=(m+1)(-d\theta_1+d\theta_2)$, meaning the Webster Ricci tensor has two eigenvalues $-2(m+1)$ and $2(m+1)$ of multiplicity $2m$ respectively. Hence
the Webster scalar curvature ${\rm Scal}^W=0$ vanishes.
Note that $\pi:(F,f)\to(M,H(M),J,\theta)$ is an instance of the famous
Fefferman construction (see \cite{Lee}).\\

The distribution $H(M)$ on $M$ is assumed to be spin with a spinor bundle $\Sigma(H(M))$. The  pullback of
two copies of $\Sigma(H(M))$ via $\pi$ corresponds to the spinor bundle $\Sigma(F)$. In fact, these two copies
are identified with the  annihilated spinors $Ann(\chi)$ by Clifford multiplication
with $\chi$ and with the annihilated spinors $Ann(T^*)$ of $T^*=\frac{1}{2}(-\xi_1+\xi_2)$, respectively. The twistors $\Phi^*_1$ and $\Phi^*_2$
belong to $Ann(T^*)$, are both constant in $\chi$-direction and $d\theta^*\cdot \Phi^*_{l}=0$, for $l=1,2$. Hence, $\Phi^*_1$
and $\Phi^*_2$ project to spinor fields
$\Phi_1$ and $\Phi_2$ on $M$, which are sections of
$\Sigma_m(H(M))$.\\

It remains to show that $\Phi_1$ and $\Phi_2$ are CR twistors on $(M,H(M),J)$.
For this purpose, we compare the Tanaka-Webster spinor connection $\nabla^W$ and $\nabla^f$
for basic spinors $\Phi^*$ along the Fefferman fibration $\pi$.
In general, we can prove that
\[
\pi^*\nabla^W_{\pi_*X}\Phi=\nabla^f_{X}\Phi^*+\frac{1}{2}JX\cdot T^*\cdot\Phi^*+
\frac{1}{2}\left(\tau(\pi_*X)-\frac{i}{4(m+1)}\Omega^f(\pi_*X)\cdot\chi\cdot\Phi^*\right)
\]
for $X\in H^*$,
and
\[
\pi^*\nabla^W_T\Phi= \nabla^f_{T^*}\Phi^*+\frac{i}{8(m+1)}\Omega^f\cdot\Phi^*
-\frac{i}{4(m+1)}\Omega^f(T)\cdot\chi\cdot\Phi^*,
\]
where $\Omega^f$ is the curvature of a certain connection
(in general, different from Tanaka-Webster) used in the Fefferman construction.
In the current situation, we simply have $\Omega^f=i\pi^*\rho_\theta=i(m+1)(-d\theta_1+d\theta_2)$ and $\Omega^f(T)=0$.
Then we compute, e.g. for the twistor $\Phi^*_{1}$,
\begin{eqnarray*}
\pi^*\nabla^W_{\pi_*X}\Phi_1&=&\nabla^f_{X}\Phi_1^*-\frac{1}{4}JX\cdot\chi\cdot\Phi_1^*=
-\frac{i}{2}(0,((X+iJ_1X)\cdot_{M_1}\phi_1)\otimes\psi_2)=0,\\
\pi^*\nabla^W_{\pi_*Y}\Phi_1&=&\nabla^f_{Y}\Phi_1^*+\frac{1}{4}JY\cdot\chi\cdot\Phi_1^*=
\frac{1}{2}(0,(\phi_1\otimes(Y-iJ_2Y)\cdot_{M_2}\psi_2)=0,\\
\pi^*\nabla^W_T\Phi_1&=&\nabla^f_{T^*}\Phi_1^*-\frac{1}{8(m+1)}\rho_\theta\cdot\Phi^*_1=-\frac{i(m+1)}{2}\Phi_1^*\neq 0\ ,
\end{eqnarray*}
with $X\in H_1$ and $Y\in H_2$, similarly for $\Phi_2^*$.
Thus, we have constructed $H$-parallel spinors $\Phi_1$ and $\Phi_2$ on $(M,H(M),J)$.
Note the basic holonomy algebra of $(M,H(M),J)$ is
not contained in $\mathfrak{su}(2m)$ (cf. \cite{Anton}, \cite{L:19}) and the spinors 
$\Phi_1$ and $\Phi_2$ are not basic for the underlying product of K{\"a}hler-Einstein spaces.

\end{sloppypar}
\end{document}